\newtheorem{thm}{Theorem}[section]
\newtheorem{prop}[thm]{Proposition}
\newtheorem{cor}[thm]{Corollary}
\theoremstyle{definition}
\newtheorem{definition}[thm]{Definition}
\theoremstyle{remark}
\numberwithin{equation}{section}
\newcommand{\N}{\mathbb{N}}  
\newcommand{\R}{\mathbb{R}}  
\newcommand{\C}{\mathbb{C}}  
\begin{document}



\title{Autonomous linear neutral equations with bounded Borel functions as initial data}

\author{Hans-Otto Walther}

\address{Mathematisches Institut, Universit\"{a}t Gie{\ss}en,
Arndtstr. 2, D 35392 Gie{\ss}en, Germany. E-mail {\tt
Hans-Otto.Walther@math.uni-giessen.de}}

\begin{abstract}

For equations with the neutral term strictly delayed we construct the fundamental solution, derive a variation-of-constants formula for inhomogeneous equations, and prove growth estimates. Only unavoidable measure and integration theory, up to the Riesz Representation Theorem, is used. A key notion is pointwise convergence of bounded sequences of continuous functions.

\bigskip

\noindent
Key words: Neutral, functional differential equation, fundamental solution, variation-of-constants formula

\bigskip

\noindent
AMS subject classification: 34 K 40, 34 K 06

\end{abstract}

\maketitle

\section{Introduction}


These notes do not contain new results but work out an approach to the fundamental solution and to the variation-of-constants formula for neutral equations which is based on earlier work \cite{W0,W01} for simpler retarded functional differential equations. A brief sketch of the present account is contained in the appendix in the paper \cite{W9}, which provides a  principle of linearized stability for a class of neutral equations with state-dependent delay.

In the sequel we use only the unavoidable parts of measure and integration theory, up to the Riesz Representation Theorem. The source for this is Rudin's textbook \cite{Ru}.

Let $n\in\N$ and $h>0$ be given. Let $I=[-h,0]$. Consider linear autonomous neutral delay differential equations of the form
\begin{equation}
v'(t)=L\partial v_t+Rv_t
\end{equation}
and
\begin{equation}
\frac{d}{dt}(v-L\circ V)(t)=Rv_t,\quad V(t)=v_t,
\end{equation}
with continuous linear maps $L$ and $R$ from the complex Banach space
$C_{cn}$ of continuous maps $I\to\C^n$ into $\C^n$. The norm on $C_{cn}$ is given by
$|\phi|=\max_{t\in I}|\phi(t)|$. We also need the the Banach space $C^1_{cn}$ of
continuously differentiable maps $I\to\C^n$, with the norm given by
$|\phi|_1=|\phi|+|\partial\phi|$ and $\partial:C^1_{cn}\to C_{cn}$ being differentiation.
Differentiation is also indicated by a prime.
Segments $v_t:I\to N$ of a map $v:M\to N$ are defined by $v_t(s)=v(t+s)$, in case
$[t-h,t]\subset M$ for $t\in\R$.

A solution of Eq. (1.1) is defined to be a continuously differentiable map $v:[-h,\infty)\to\C^n$ so that Eq. (1.1) holds for all $t\ge0$. In particular, all segments
$v_t$, $t\ge0$, of a solution belong to the closed subspace
$$
C^1_{cnLR}=\{\phi\in C^1_{cn}:\phi'(0)=L\partial\phi+R\phi\}
$$
of $C^1_{cn}$. A solution of Eq. (1.2) is a continuous map $v:[-h,\infty)\to\C^n$ for which the map $[0,\infty)\ni t\mapsto v(t)-Lv_t\in\C^n$ is differentiable and satisfies Eq. (1.2) for all $t\ge0$.

Equation (1.1) arises as linear variational equation at a stationary point of a semiflow which is given by a neutral equation of the form
$$
x'(t)=g(\partial\,x_t,x_t),
$$
under mild hypotheses on $g$ designed to cover examples with state-dependent delay \cite{W8}.

Linear neutral equations in the form (1.2) are familiar from the work of Hale and Meyer \cite{HM} and Henry \cite{He}.
That the form (1.2) of linear neutral equations is useful can be seen already in Poisson's paper \cite{P}.

\begin{prop}
Every solution of Eq. (1.1) also is a solution of Eq. (1.2).
\end{prop}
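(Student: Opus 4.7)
The plan is to differentiate $t\mapsto v(t)-Lv_t$ directly from the assumption that $v$ is continuously differentiable and satisfies (1.1). The only non-obvious ingredient is differentiability of the $C_{cn}$-valued segment map, after which continuity and linearity of $L$ and the chain rule do the rest.

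First I would verify that the map $V\colon[0,\infty)\to C_{cn}$, $V(t)=v_t$, is continuously differentiable with $V'(t)=\partial v_t$. Fix $t\ge 0$. For small $\eta\ne0$ with $t+\eta\ge 0$, the $C_{cn}$ difference quotient evaluated at $s\in I$ is
$$\frac{v(t+\eta+s)-v(t+s)}{\eta}=\int_0^1 v'(t+s+\theta\eta)\,d\theta.$$
Since $v'$ is continuous on $[-h,\infty)$ and hence uniformly continuous on each compact subinterval, this integrand tends to $v'(t+s)=(\partial v_t)(s)$ uniformly in $s\in I$ as $\eta\to 0$. That gives $V'(t)=\partial v_t$ in the norm of $C_{cn}$, and continuity of $t\mapsto\partial v_t$ in $C_{cn}$ follows from the same uniform continuity.

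Next, continuity and linearity of $L\colon C_{cn}\to\C^n$ permit passing $L$ through the limit which defines $V'(t)$. Therefore $L\circ V$ is differentiable on $[0,\infty)$ with
$(L\circ V)'(t)=L\,V'(t)=L\,\partial v_t$. Combined with differentiability of $v$ itself, this yields
$$\frac{d}{dt}\bigl(v(t)-Lv_t\bigr)=v'(t)-L\,\partial v_t,$$
and substituting (1.1) into the right-hand side produces $Rv_t$, which is precisely (1.2).

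I do not anticipate any real obstacle. The only point deserving care is notational: $\partial v_t$ is defined as the derivative of the element $v_t\in C^1_{cn}$, but for a $C^1$ function $v$ it coincides pointwise with the segment $(v')_t$ of the derivative, both equal to $s\mapsto v'(t+s)$, so the identification $V'(t)=\partial v_t$ is unambiguous.
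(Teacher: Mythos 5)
Your argument is correct and follows the same route as the paper: differentiate the segment map $V$, use linearity and continuity of $L$ to differentiate $L\circ V$, then substitute (1.1). The paper simply asserts differentiability of $V$ with $V'(t)=\partial v_t$, whereas you supply the details via the integral form of the difference quotient and uniform continuity of $v'$ on compacts; that is a sound way to justify the step the paper treats as standard.
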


\begin{proof} Suppose $v$ is a solution of
Eq. (1.1). Then the curve $V:[0,\infty)\ni t\mapsto v_t\in
C_{cn}$ is differentiable with $DV(t)1=\partial v_t$ for all
$t\ge0$. As $L$ is linear and continuous we infer that the map
$L\circ V:[0,\infty)\ni t\mapsto Lv_t\in\C^n$ is differentiable,
with $D(L\circ V)(t)1=L\partial v_t$ for all $t\ge0$. Hence
$v|[0,\infty)-L\circ V$ is differentiable, and Eq. (1.1) shows that
for every $t\ge0$ Eq. (1.2) holds.
\end{proof}

A hypothesis which we make throughout the paper is that the neutral part in Equations (1.1) and (1.2) is {\it strictly delayed}, or, that $L\phi$ does not depend on the values of $\phi$ for $t\ge0$ small. More precisely, we require that

($\Delta$) there is $\Delta\in(0,h)$ with $L\phi=0$ for all $\phi\in C_{cn}$ with $\phi(t)=0$
on $[-h,-\Delta]$.

Corollary 2.8 from \cite{W8} shows that the property ($\Delta$) holds for maps $L$ in Eq. (1.1) as they arise in linearization of a semiflow which is given by the neutral equation $x'(t)=g(\partial\,x_t,x_t)$. This semiflow was constructed in \cite{W8} under a hypothesis similar to ($\Delta$), for nonlinear maps and local in nature. Related hypotheses have been used in the study of neutral equations for a long time, see the monographs \cite{H,HVL} and also \cite{NH}. Property ($\Delta$) is a rather strong condition among them.


We now abandon Eq. (1.1) and consider Eq. (1.2), and its inhomogeneous version
\begin{equation}
\frac{d}{dt}(v-L\circ V)(t)=Rv_t+q(t),\quad V(t)=v_t,
\end{equation}
with a continuous map $q:[0,\infty)\to\C^n$. Initial values $\phi\in C_{cn}$ uniquely define solutions of Eq. (1.2), see Proposition 5.1 (whose proof would also yield solutions to the inhomogeneous equation, but we don't need this). We are interested in more general solutions with discontinuous initial data, in particular for the data $\phi:I\to\C^n$ given by $\phi(t)=0$ for $t<0$ and $\phi(0)=e_j$, with $e_{jk}=\delta_{jk}$ for $j,k$ in $\{1,\ldots,n\}$.


A notion which is important in the sequel is pointwise convergence of a uniformly bounded sequence of continuous functions on an interval $J\subset\R$. Let us underline that this means convergence everywhere, as opposed to {\it convergence almost everywhere} with respect to a $\sigma$-algebra, which would be less useful for our purpose. We say a map $w:J\to X$, $X$ a normed linear space over the field $\C$, has property (A) if it is the pointwise limit of a
sequence of continuous maps $w_m:J\to X$, $m\in\N$, which is uniformly bounded, $\sup_{m,t}|w_m(t)|<\infty$. In case $X=\C$ each function with property (A) is a bounded Borel function, i. e., bounded and measurable with respect to the $\sigma$-Algebra of Borel sets in $J$. In other words, for $X=\C$ we deal with functions in the Baire classes 0 and 1 \cite{N}.

The organization of this paper is as follows. Section 2 borrows from \cite{Ru} the parts of measure theory
used in the sequel. Section 3 defines extensions of operators $C_{cn}\to Y$  with range in a normed linear space $Y$ over $\C$ to the larger domain $B_{nA}$ of maps whose components have property (A). The extensions have range in the second dual $Y^{\ast\ast}$. - For the purpose of the present account it would be sufficient to consider operators $C_{cn}\to\C^n$, or operators with finite-dimensional range, but we prefer to work in a slightly more general
framework.

For solutions of inhomogeneous equations we need integrals of curves with range in the second dual of a normed linear space. The corresponding weak-star integral is introduced in Section 4. The main theme of this section is commutativity of the integral with operators and their extensions.

Section 5 establishes existence, uniqueness and continuous dependence for solutions to Eq. (1.2) with initial data in the space $B_{nA}$. As a special case we obtain the fundamental solution and an estimate of its growth.

Section 6 deals with the inhomogeneous equation (1.3).
The variation-of-constants formula for solutions with initial data in $C_{cn}$ is derived (Corollary 6.3) and yields a growth estimate (Corollary 6.4).



{\bf Notation.} $\delta_{jk}$ is the Kronecker symbol with values
$1$ for $j=k$ and $0$ for $j\neq k$. For a set $X$ the set of its
subsets is denoted by ${\mathcal P}(X)$. The set of maps with
domain $D$ and range $R$ is denoted by $R^D$. We write
$[-\infty,\infty]$ for $\R\cup\{-\infty,\infty\}$ and $[0,\infty]$
for $[0,\infty)\cup\{\infty\}$. $S^1_{\C}$ stands for the unit
circle in $\C$. The projection $\C^n\to\C$ of a row vector onto
its $j$-th component is denoted by $pr_j$. The $j$-th component of
a vector in $z\in\C^n$ is indicated by the index $j$. The same
notation is used in case of vector-valued maps.

For normed linear spaces $X,Y$ over the same field ($\R$ or $\C$) the
normed linear space of continuous linear maps $X\to Y$, with the norm given by
$$
|T|=\sup_{|x|=1}|Tx|,
$$
is denoted by
$L_c(X,Y)$. The dual of $X$ is $X^{\ast}=L_c(X,\C)$. The linear map
$$
\iota:X\to X^{\ast\ast}
$$
given by $\iota(x)x^{\ast}=x^{\ast}x$ preserves the norm.

It is convenient to set $C_c=C_{c1}$.

We shall use the Riemann integral of a continuous map from a
compact interval into a Banach space and the fact that this
integral commutes with continuous linear maps.
\section{Review of measure theory}

We give a brief account of what we need in the sequel, following
Rudin's monograph \cite{Ru}. See also \cite{A}, and compare Appendix I in \cite{DvGVLW}.

{\bf $\sigma$-algebras, measurable functions, measures.} A
$\sigma$-Algebra $S$ on a set $X$ is a collection $S\subset{\mathcal
P}(X)$ of subsets of $X$ so that $X\in S$, $X\setminus A\in S$ for
all $A\in S$, and for all sequences $(A_n)_1^{\infty}$ in $S$,
$\cup_{n=1}^{\infty}A_n\in S$. Then the pair $(X,S)$ is called a
measurable space.

Let $(X,S)$ be a measurable space and let $Y$ be a topological
space. A map $f:X\to Y$ is measurable iff preimages of open sets are
measurable. If $f:X\to\C$ is measurable then also the real functions
$Re\,f$, $Im\,f$, $|f|$ are measurable. For measurable functions
$f:X\to\R$ and $g:X\to\R$ also $f+ig:X\to\C$ is measurable. For
$f:X\to\C$ and $g:X\to\C$ measurable also $f+g$, $fg$ and
$f+ig:X\to\C$ are measurable. For $f:X\to\C$ measurable there is a
measurable function $\alpha:X\to\C$ with $\alpha(X)\subset S^1_{\C}$
and
$$
f(x)=\alpha(x)|f(x)|\quad\text{on}\quad X.
$$
For $A\in S$ the characteristic function $\chi_A:X\to\R$ given by
$\chi_A(x)=1$ on $A$ and $\chi_A(x)=0$ elsewhere is measurable.

Let $X$ be a set. For every set $F\subset {\mathcal P}(X)$ there is
a smallest $\sigma$-Algebra containing $F$.

If $X$ is a topological space then the smallest $\sigma$-Algebra
containing the topology is called the Borel-$\sigma$-Algebra of $X$.
It contains all open and all closed sets. The elements of the
Borel-$\sigma$-algebra are called Borel sets. For maps $X\to Y$ into
a topological space $Y$ we speak of Borel measurability.  All
continuous maps with domain $X$ are Borel-measurable.
Borel-measurable real- and complex-valued functions are called Borel
functions.

Let $(X,S)$ be a measurable space. Limits of pointwise convergent
sequences of measurable functions $X\to\C$ are measurable.

If $f:X\to\R\cup\{-\infty,\infty\}$ and
$g:X\to\R\cup\{-\infty,\infty\}$ are measurable then also
$\max\{f,g\}$ and $\min\{f,g\}$ are measurable.

A (positive) measure $\mu:S\to[0,\infty]$ on a measure space $(X,S)$
is a map which is countably additive, i. e.,
$$
\mu(\cup_{n=1}^{\infty}A_n)=\sum_{n=1}^{\infty}\mu(A_n)
$$
for any sequence of pairwise disjoint elements $A_n\in S$, and which satisfies
$\mu(A)<\infty$ for some $A\in S$. A complex measure $\mu:S\to\C$ is
countably additive. Real measures are complex measures with values
in $\R$.

A triple $(X,S,\mu)$ with a measurable space $(X,S)$ and a
(positive) measure $\mu:S\to[0,\infty]$ is called a measure space.

Let $X$ be a locally compact Hausdorff space. A (positive or complex
or real) measure on the Borel-$\sigma$-Algebra of $X$ is called a
Borel measure on $X$. Let $\mu$ be a positive Borel measure on $X$.
A Borel set $B\subset X$ is called

outer regular iff $\mu(B)=\inf\{\mu(V):B\subset V,
V\,\,\text{open}\}$,

and

inner regular iff $\mu(B)=\sup\{\mu(K):K\subset B,
K\,\,\text{compact}\}$.

The positive Borel measure $\mu$ is called regular if every Borel
set $B\subset X$ is outer regular and inner regular.

{\bf Integrals.} We use the following addition and  multiplication
rules:
\begin{eqnarray*}
r+\infty & = &
\infty+r=\infty,\,\,r-\infty=-\infty+r=-\infty\,\,\text{for
all}\,\,r\in\R,\\
r\cdot\infty & = & \infty\cdot r=\infty\,\,\text{for}\,\,0<r\le\infty,\\
0\cdot\infty & = & \infty\cdot 0=0.
\end{eqnarray*}
Let $(X,S,\mu)$ be a measure space. Simple functions
$s:X\to[0,\infty)$ are those which have only finitely many values.
Then
$$
s=\sum_{c\in s(X)}c\,\chi_{s^{-1}(c)}
$$
For a measurable simple function $s$ on $X$ and for $A\in S$, the
integral of $s$ with respect to $\mu$ is defined as
$$
\int_A sd\mu=\sum_{c\in s(X)}c\,\mu(s^{-1}(c)\cap A)
$$
For $f:X\to[0,\infty]$ measurable and for $A\in S$ the integral of
$f$ over $A$ is defined as
$$
\int_A fd\mu=\int_Af(x)d\mu(x)=\sup_{s\,\,\text{simple and measurable},\,\,0\le s\le
f}\int_A sd\mu.
$$
The set of measurable $f:X\to\C$ with $\int_X|f|d\mu<\infty$ is
denoted by $L^1(\mu)$. For each $f\in L^1(\mu)$ and for each $A\in
S$ the integrals of the positive and negative parts of $Re\,f$ and
$Im\,f$, considered as maps into $[0,\infty]$, belong to $\R$, and
we define
$$
\int_Af\,d\mu=\int_A(Re\,f)^+d\mu-\int_A(Re\,f)^-d\mu+i(\int_A(Im\,f)^+d\mu-\int_A(Im\,f)^-d\mu)\in\C
$$
The set $L^1(\mu)$ is a vector space, and the maps
$$
L^1(\mu)\ni f\mapsto\int_Afd\mu\in\C, A\in S,
$$
are linear. For all $f\in L^1(\mu)$,
$$
\left|\int_Xfd\mu\right|\le\int_X|f|d\mu.
$$

For measurable maps $f:X\to[-\infty,\infty]$ and for $A\in S$ the
integral of $f$ over $A$ is defined as
$$
\int_Af\,d\mu=\int_Af^+d\mu-\int_Af^-d\mu
$$
provided at least one of the integrals on the right hand side belongs to
$\R$ (is finite).

\begin{thm}
(Lebesgue's Dominated Convergence Theorem) Let a measure space
$(X,S,\mu)$ be given. Suppose the sequence of measurable functions
$f_n:X\to\C$, $n\in\N$, is (everywhere on $X$) pointwise convergent
to $f:X\to\C$, and for some real-valued $g\in L^1(\mu)$,
$$
|f_n(x)|\le g(x)\quad\text{for all}\quad n\in\N,x\in X.
$$
Then $f\in L^1(\mu)$ and for $n\to\infty$,
$$
\int_X|f_n-f|d\mu\to0,\quad\int_Xf_nd\mu\to\int_Xf\,d\mu.
$$
\end{thm}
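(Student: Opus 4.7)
The plan is to reduce to Fatou's lemma applied to the non-negative sequence $h_n=2g-|f_n-f|$; since Fatou is not stated explicitly in the excerpt, I would either cite it from \cite{Ru} or derive it along the way from the Monotone Convergence Theorem.

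First, $f$ is measurable as the everywhere pointwise limit of the measurable $f_n$, by the statement on limits of pointwise convergent sequences recorded earlier in this section. Passing to the limit in $|f_n(x)|\le g(x)$ yields $|f(x)|\le g(x)$ on all of $X$, which forces $g\ge 0$ and places $|f|$ in $L^1(\mu)$, so $f\in L^1(\mu)$. Next, by the triangle inequality together with the bounds $|f_n|,|f|\le g$, the functions $h_n$ are non-negative and measurable, and $h_n\to 2g$ pointwise because $|f_n-f|\to 0$. Fatou's lemma then gives
$$\int_X 2g\,d\mu=\int_X\liminf_n h_n\,d\mu\le\liminf_n\int_X h_n\,d\mu=\int_X 2g\,d\mu-\limsup_n\int_X|f_n-f|\,d\mu.$$
Since $\int_X 2g\,d\mu$ is finite it can be subtracted from both sides, so $\limsup_n\int_X|f_n-f|\,d\mu\le 0$, which forces $\int_X|f_n-f|\,d\mu\to 0$. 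The estimate $\bigl|\int_X(f_n-f)\,d\mu\bigr|\le\int_X|f_n-f|\,d\mu$ then delivers the second convergence.

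The main obstacle is Fatou's lemma itself, which the excerpt does not list among the facts taken for granted. If it is not granted, I would obtain it by applying the Monotone Convergence Theorem to the increasing sequence of non-negative measurable functions $\phi_k=\inf_{n\ge k}h_n$, using $\liminf_n h_n=\sup_k\phi_k$ together with the fact that the integral of an increasing pointwise limit of non-negative measurable functions equals the limit of the integrals. The hypothesis $g\in L^1(\mu)$, rather than merely $g$ measurable, enters only at the subtraction step, where finiteness of $\int_X 2g\,d\mu$ is essential; without it the cancellation producing $\limsup_n\int_X|f_n-f|\,d\mu\le 0$ breaks down, which is consistent with the familiar failure of dominated convergence in the absence of an integrable majorant.
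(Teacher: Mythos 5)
The paper does not prove this theorem: Section~2 is an explicit review of measure theory quoted from Rudin's monograph, and Theorem~2.1 is stated there without proof as background material. Your argument is the standard one (indeed it is essentially Rudin's own proof): apply Fatou's lemma to $h_n=2g-|f_n-f|\ge0$, pass to the limit, and subtract the finite quantity $\int 2g\,d\mu$ to force $\limsup_n\int|f_n-f|\,d\mu\le0$; the second convergence then follows from $\left|\int(f_n-f)\,d\mu\right|\le\int|f_n-f|\,d\mu$. The steps are all correct, including the preliminary observations that $f$ is measurable as a pointwise limit, that $|f|\le g$ so $f\in L^1(\mu)$, and your remark that Fatou's lemma (or the Monotone Convergence Theorem from which it follows) must be imported from Rudin since the paper's review does not list it. Nothing in your proof conflicts with the paper, which simply delegates this result to the cited source.
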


{\bf The Riesz Representation Theorem.} The total variation of a
complex measure $\mu:S\to\C$, $(X,S)$ a measurable space, is given
by
$$
|\mu|(A)=\sup\left\{\sum_{j=1}^{\infty}|\mu(A_j)|:A=\cup_{j=1}^{\infty}A_j,
A_j\cap A_k=\emptyset\,\,\text{for}\,\,j\neq k,A_j\in S\right\}
$$
$|\mu|$ is a positive measure on $S$ with range in $[0,\infty)$.

Notice that each measurable function $h:X\to\C$ with values on the
unit circle belongs to $L^1(|\mu|)$ since
$$
\int_X|h|d|\mu|=\int_X\chi_Xd|\mu|=|\mu|(X)<\infty.
$$
Given a measure $\mu:S\to\C$, $(X,S)$ a measurable space, there
exists a measurable function $h:X\to\C$ with values on the unit
circle such that
$$
d\mu=h\,d|\mu|,
$$
which means
$$
\mu(A)=\int_Ah\,d|\mu|\quad\text{for all}\quad A\in S.
$$
For a complex Borel measure $\mu:S\to\C$, $X$ a locally compact
Hausdorff space, and $h:X\to\C$ as before, we define
$$
\int f\,d\mu=\int fh\,d|\mu|
$$
for all $f:X\to\C$ such that $fh\in L^1(|\mu|)$.

Let $X$ be a locally compact Hausdorff space. $C_c(X)$ is the vector
space of all continuous functions $X\to\C$ with compact support. Let $C_0(X)$ denote its closure in the Banach space of continuous bounded functions $X\to\C$, with the norm given by
$$
|f|=\sup_{x\in X}|f(x)|.
$$
All functions $f\in C_0(X)$ are bounded Borel functions, and therefore
$fh\in L^1(|\mu|)$ for every complex Borel measure (defined on the
Borel-$\sigma$-algebra of $X$) and every bounded Borel function
$h:X\to\C$.


\begin{thm}
{\it (Riesz Representation Theorem, Theorem 6.19 in \cite{Ru})} Let
$X$ be a locally compact Hausdorff space, consider $L:C_0(X)\to\C$
linear and continuous. Then there exist a uniquely determined
complex Borel measure $\mu$ with $|\mu|$ regular and a Borel
function $h:X\to\C$ which has all values on the unit circle such
that for every $f\in C_c(X)$,
$$
Lf=\int_Xf\,d\mu=\int_Xfh\,\,d|\mu|.
$$
\end{thm}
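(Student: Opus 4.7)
The plan is to reduce the complex case to the positive version of the Riesz Representation Theorem (Theorem~2.14 in \cite{Ru}) and then produce the required complex measure and unimodular density by a Radon--Nikodym-style argument, following Rudin's treatment of Theorem~6.19.

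First I would construct a positive linear functional $\Lambda$ on $C_c(X)$ that dominates $L$. For nonnegative real $f\in C_c(X)$ set
$$
\Lambda f=\sup\{|Lg|:g\in C_c(X),\ |g|\le f\},
$$
which is finite because $|Lg|\le|L|\,|f|$. The nontrivial point is additivity on the nonnegative cone: given $f_1,f_2\ge0$ and $g\in C_c(X)$ with $|g|\le f_1+f_2$, one splits $g=g_1+g_2$ by setting $g_i=g\,f_i/(f_1+f_2)$ on $\{f_1+f_2>0\}$ (and $0$ elsewhere), so that $|g_i|\le f_i$ and each $g_i$ lies in $C_c(X)$; linearity of $L$ then yields $|Lg|\le\Lambda f_1+\Lambda f_2$, with superadditivity immediate. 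After checking positive homogeneity I would extend $\Lambda$ $\C$-linearly to all of $C_c(X)$ via the usual four-piece decomposition.

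Next I would apply the positive Riesz theorem to $\Lambda$ to obtain a regular positive Borel measure $\lambda$ with $\Lambda f=\int f\,d\lambda$ for every $f\in C_c(X)$. The estimate $|Lf|\le\Lambda|f|=\int|f|\,d\lambda$ shows that $L$ extends to a continuous linear functional on $L^1(\lambda)$ of norm at most one, and by the duality $(L^1)^{*}=L^{\infty}$ (equivalently, a Radon--Nikodym argument) there is a Borel function $h_0:X\to\C$ with $|h_0|\le1$ $\lambda$-almost everywhere and $Lf=\int f h_0\,d\lambda$ for all $f\in C_c(X)$. Comparing the identity $\int|f|\,d\lambda=\Lambda|f|$ with the bound $|Lg|=\bigl|\int gh_0\,d\lambda\bigr|\le\int|gh_0|\,d\lambda$ over all $g\in C_c(X)$ with $|g|\le f$ forces $|h_0|=1$ $\lambda$-almost everywhere; modifying $h_0$ on the resulting $\lambda$-null set yields a Borel function $h:X\to S^1_{\C}$ with $Lf=\int fh\,d\lambda$ for every $f\in C_c(X)$.

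Finally, defining the complex Borel measure $\mu$ by $\mu(A)=\int_A h\,d\lambda$, the relations $d\mu=h\,d|\mu|$ and $|h|=1$ force $|\mu|=\lambda$, so $|\mu|$ is regular and the two integral representations of $Lf$ coincide with $\int f\,d\mu$. For uniqueness, any other representing pair $(\mu',h')$ with $|\mu'|$ regular satisfies $\int f\,d\mu=\int f\,d\mu'$ for all $f\in C_c(X)$; via Urysohn's lemma (approximating indicator functions of compact sets from above by $C_c(X)$-functions) and inner regularity one concludes $\mu=\mu'$ on all Borel sets, and $h$ is then determined $|\mu|$-almost everywhere by $d\mu=h\,d|\mu|$. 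I expect the principal obstacle to be the additivity of $\Lambda$ in the first step, where the splitting argument must be handled carefully at points where $f_1+f_2$ vanishes, together with the promotion of the $L^{\infty}$-density from $|h_0|\le1$ a.e.\ to a genuinely unimodular Borel function $h$ defined everywhere on $X$.
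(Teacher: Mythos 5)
The paper does not prove this theorem; it is quoted verbatim (with the polar decomposition $d\mu=h\,d|\mu|$ from Rudin's Theorem 6.12 folded in) from Theorem 6.19 of \cite{Ru}, which is the source you are reconstructing. Your sketch follows Rudin's argument faithfully: the dominating positive functional $\Lambda$, the positive Riesz theorem, density of $C_c(X)$ in $L^1(\lambda)$ together with $(L^1)^*\!=L^\infty$ to produce the density $h_0$, the forcing of $|h_0|=1$ a.e.\ via $\Lambda|f|=\int|f|\,d\lambda$, and the Borel modification on a null set -- so it is correct and takes the same route the paper implicitly relies on.
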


The map $L\mapsto\mu$ given by Theorem 2.2 is an isomorphism onto the $\C$-vectorspace of
regular complex Borel measures on $X$, compare Theorem IV.6.3 in \cite{DS}.

{\bf Lebesgue measure.} A cell $Q=Q(I_1,\ldots,I_n)$ in $\R^n$ is a
set given by the Cartesian product of $n$ bounded intervals
$I_1,\ldots,I_n$. Its volume $Vol(Q)$ is defined as the product of
the diameters of the intervals $I_j$.

A measure $\mu:S\to[0,\infty]$ is called {\it complete} if every
subset of a set of measure zero belongs to S.

There exist a $\sigma$-Algebra $M$ on $\R^n$ and a positive
complete measure $m:M\to[0,\infty]$ with the following properties.

(i) M contains all Borel sets, and $E\in M$ if and only if there
are a $F_{\sigma}$-set $A$ (countable union of closed sets) and a
$G_{\delta}$-set $B$ (countable intersection of open sets) so that
$A\subset E\subset B$ and $m(B\setminus A)=0$. The measure $m$ is
regular.

(ii) For every cell $Q\subset\R^n$,
$$
m(Q)=Vol(Q).
$$

(iii) For every $x\in\R^n$ and all $E\in M$, $x+E\in M$ and
$m(x+E)=m(E)$.

(iv) For any positive Borel measure $\mu$ with property (iii) (for
all Borel sets $E$) and such that $m(K)<\infty$ for all compact
sets $K\subset\R^n$ there exists $c\ge0$ such that
$\mu(E)=c\,m(E)$ for all Borel sets.

This measure $m$ is called the Lebesgue measure on $\R^n$. The
elements of $M$ are the Lebesgue measurable subsets of $\R^n$. If
$E\in M$ then the measure $m_E$ obtained by restricting $m$ to the
$\sigma$-algebra $M_E$ of the sets $A\cap E$, $A\in M$, is called
the Lebesgue measure on $E$. We use the notation
$$
L^1(E)=L^1(m_E)\,\,\text{and}\,\,\int_Efdm=\int_Ef(x)dm(x)=\int fdm_E.
$$

Lebesgue-integrals of maps with range in $\C^n$ are defined as the
vectors whose components are the integrals of the components of
the map. For such maps and for any norm on $\C^n$ we have the
estimate
$$
\left|\int_Efdm\right|\le\int_E|f|dm.
$$

Riemann-integrable functions $f:[a,b]\to\C$ belong to
$L^1([a,b])$, and
$$
\int_a^bf(x)dx=\int_{[a,b]}fdm=\int_{(a,b)}fdm
$$




A {\it regulated function} $f:[a,b]\to\C$, $a<b$, has right and left
limits at every $t\in[a,b]$, see \cite{D}. Ruled functions can be
approximated by sequences of step functions uniformly on $[a,b]$. A
step function is a simple function which is constant on each of the
open intervals $(t_{j-1},t_j)$, $j=1,\ldots,J$, given by a partition
$$
a=t_0<t_1<\ldots<t_J=b
$$
of the interval $[a,b]$.

{\bf Product measures and Fubini's Theorem.} Let a measurable space
$(X,S)$ be given. A (positive) measure $\mu$ on $X$ is
$\sigma$-finite if $X$ is the (countable) union of sets $A_n\in S$,
$n\in\N$, with $\mu(A_n)<\infty$. In this case $(X,S,\mu)$ is called
$\sigma$-finite.

Suppose $(X,S,\mu)$ and $(Y,T,\lambda)$ are $\sigma$-finite measure
spaces. The product $S\times T$ of the $\sigma$-algebras $S$ and $T$
is the smallest $\sigma$-algebra on $X\times Y$ containing all sets
$A\times B$ with $A\in S$ and $B\in T$.

For a function $f:X\times Y\to M$, $M$ some set, and for every $x\in
X$ and all $y\in Y$ define $f_x:Y\to M$ and $f^y:X\to M$ by
$$
f_x(\eta)=f(x,\eta)\,\,\text{and}\,\,f^y(\xi)=f(\xi,y).
$$
If $f:X\times Y\to\C$ (or, $f:X\times Y\to[0,\infty]$) is $(S\times
T)$-measurable then all functions $f_x$ are $T$-measurable and all
functions $f^y$ are $S$-measurable.

Let $Q\in S\times T$. Then all
$$
Q_x=\{y\in Y:(x,y)\in Q\}, x\in X,
$$
belong to $T$, and all
$$
Q^y=\{x\in X:(x,y)\in Q\}, y\in Y,
$$
belong to $S$; the function
$$
\phi:X\ni x\mapsto \lambda(Q_x)\in[0,\infty]
$$
is $S$-measurable; the function
$$
\psi:Y\ni y\mapsto \mu(Q^y)\in[0,\infty]
$$
is $T$-measurable; and
$$
\int_X\phi\,d\mu=\int_Y\psi\,d\lambda.
$$

The relations
$$
(\mu\times\lambda)(Q)=\int_X\lambda(Q_x)d\mu(x)=\int_Y\mu(Q^y)d\lambda(y)
$$
for $Q\in S\times T$ define a $\sigma$-finite measure
$\mu\times\lambda$, which is called the product measure of $\mu$
and $\lambda$.

\begin{thm}
(Fubini's Theorem) Suppose $(X,S,\mu)$ and $(Y,T,\lambda)$ are
$\sigma$-finite measure spaces, and $f:X\times Y\to\C$ ($f:X\times
Y\to[-\infty,\infty]$) is $(S\times T)$-measurable.

(a) In case $f(X\times Y)\subset[0,\infty]$,
$$
\phi:X\ni
x\mapsto\int_Yf_xd\lambda\in[0,\infty]\quad\text{is}\quad
S-\text{measurable},
$$
$$
\psi:Y\ni y\mapsto\int_Xf^yd\mu\in[0,\infty]\quad\text{is}\quad
T-\text{measurable},
$$
and
\begin{equation}
\int_X\phi\,d\mu=\int_{X\times Y}f\,d(\mu\times\lambda)=\int_Y\psi
d\lambda.
\end{equation}

(b) In case $f:X\times Y\to\C$ and
$$
\int_X\phi^{\ast}d\mu<\infty\quad\text{for}\quad\phi^{\ast}:X\ni
x\mapsto\int_Y|f_x|d\lambda\in[0,\infty]
$$
we have
$$
f\in L^1(\mu\times\lambda).
$$

(c) In case $f\in L^1(\mu\times\lambda)$ there exist sets $Z\in S$ and $N\in T$ of measure zero with
$f_x\in L^1(\lambda)$ for all $x\in X\setminus Z$ and $f^y\in L^1(\mu)$ for all $y\in Y\setminus N$, the maps
$\phi:X\to\C$ and $\psi:Y\to\C$ given by
\begin{eqnarray*}
\phi(x)=\int_Yf_xd\lambda & \text{for all} & x\in X\setminus Z\quad\text{and}\quad \phi(x)=0\quad\text{on}\quad Z,\\
\psi(y)=\int_Xf^yd\mu & \text{for all} & y\in Y\setminus N\quad\text{and}\quad \psi(y)=0\quad\text{on}\quad N,
\end{eqnarray*}
belong to $L^1(\mu)$ and $L^1(\lambda)$, respectively, and Eq. (2.1) holds.
\end{thm}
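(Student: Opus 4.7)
The plan is to build up the conclusion in three stages, following the standard Rudin approach, starting from the definition of $\mu\times\lambda$ recalled immediately before the theorem.

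For part (a), first note that for characteristic functions $f=\chi_Q$ with $Q\in S\times T$, the measurability of $\phi$ and $\psi$ together with the equality of the two iterated integrals is exactly the definition of $\mu\times\lambda$ stated in the excerpt. By linearity of integration and of the section maps $f\mapsto f_x$, $f\mapsto f^y$, the same conclusions then hold for every non-negative $(S\times T)$-measurable simple function. For a general measurable $f:X\times Y\to[0,\infty]$, choose a pointwise increasing sequence of simple measurable functions $s_n$ with $s_n\nearrow f$ on $X\times Y$. For each fixed $x\in X$, $(s_n)_x\nearrow f_x$ pointwise on $Y$, and the monotone version of Theorem 2.1 (which follows from the supremum definition of the non-negative integral) gives $\int_Y(s_n)_x\,d\lambda\nearrow\int_Yf_x\,d\lambda=\phi(x)$. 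Each $\phi_n(x)=\int_Y(s_n)_x\,d\lambda$ is $S$-measurable by the simple-function case, so $\phi$ is $S$-measurable as a pointwise limit of measurable functions. A second application of monotone convergence on $(X,S,\mu)$ turns the simple-function identity $\int_X\phi_n\,d\mu=\int_{X\times Y}s_n\,d(\mu\times\lambda)$ into $\int_X\phi\,d\mu=\int_{X\times Y}f\,d(\mu\times\lambda)$. The argument for $\psi$ is symmetric.

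Part (b) is then immediate: $|f|$ is $(S\times T)$-measurable because $f$ is, and part (a) applied to $|f|$ yields $\int_{X\times Y}|f|\,d(\mu\times\lambda)=\int_X\phi^{\ast}d\mu<\infty$, whence $f\in L^1(\mu\times\lambda)$.

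For part (c), decompose $f=u_1-u_2+i(u_3-u_4)$ with $u_1=(Re\,f)^+$, $u_2=(Re\,f)^-$, $u_3=(Im\,f)^+$, $u_4=(Im\,f)^-$, each non-negative, $(S\times T)$-measurable, and bounded above by $|f|$. Applying part (a) to each $u_k$ produces $S$-measurable functions $\phi_k(x)=\int_Y(u_k)_x\,d\lambda$ with $\int_X\phi_k\,d\mu\le\int_{X\times Y}|f|\,d(\mu\times\lambda)<\infty$, so each $\phi_k$ is finite $\mu$-almost everywhere; let $Z\in S$ be the $\mu$-null set on which at least one $\phi_k$ equals $\infty$. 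On $X\setminus Z$ all sections $(u_k)_x$ lie in $L^1(\lambda)$, so $f_x\in L^1(\lambda)$ and $\int_Y f_x\,d\lambda=\phi_1(x)-\phi_2(x)+i(\phi_3(x)-\phi_4(x))$; the function $\phi$ as defined in the statement agrees with this linear combination on $X\setminus Z$ and vanishes on $Z$, hence is $S$-measurable, lies in $L^1(\mu)$, and summing the four identities from part (a) with appropriate signs yields the desired equality. The construction of $N$ and $\psi$ is completely symmetric. The only delicate point in the whole argument is this bookkeeping of the exceptional null sets in part (c); everything else is either a direct unpacking of the product-measure definition or a routine monotone-convergence passage to the limit.
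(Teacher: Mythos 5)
The paper does not actually contain a proof of Theorem 2.3: Section~2 is an explicit review of measure theory ``following Rudin's monograph,'' and Fubini's Theorem is stated there as imported background, with the product-measure identity for sets $Q\in S\times T$ already assumed in the preamble. Your sketch is a correct reconstruction of the standard Rudin argument, which is exactly the argument the paper is pointing at, so there is no genuine mismatch with the paper; you have simply supplied a proof where the paper cites one.

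Two small remarks on the sketch itself. First, you invoke ``the monotone version of Theorem~2.1,'' but Theorem~2.1 in the paper is the \emph{Dominated} Convergence Theorem; the Monotone Convergence Theorem is not a corollary of it in the order the paper presents things (if anything the logical dependence runs the other way in Rudin), and it is also not quite immediate from the supremum definition of $\int f\,d\mu$ without the usual $3\varepsilon$-style argument. Since the paper is explicitly leaning on Rudin, the cleaner move is simply to cite Rudin's Monotone Convergence Theorem rather than try to derive it from Theorem~2.1. Second, in part (c) you should say explicitly that $\phi=\bigl(\phi_1-\phi_2+i(\phi_3-\phi_4)\bigr)\chi_{X\setminus Z}$ is measurable (as a product of measurable functions) and that $|\phi|\le\phi_1+\phi_2+\phi_3+\phi_4\in L^1(\mu)$; this is what puts $\phi$ in $L^1(\mu)$ and lets you integrate the four part-(a) identities and combine them with signs, with the $\mu$-null set $Z$ contributing nothing. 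You gesture at this (``bookkeeping of the exceptional null sets'') but it is worth stating. With those two points tightened, the argument is complete and is the same route Rudin takes.
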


\begin{cor} {\it (Without product measure)} If $f:X\times Y\to\C$
(or, $f:X\times Y\to[-\infty,\infty]$)
is $(S\times T)$-measurable and
$$
\int_Xd\mu(x)\int_Y|f(x,y)|d\lambda(y)<\infty
$$
then $f\in L^1(\mu\times\lambda)$, and the assertions of part (c) of Theorem 2.3 hold.
\end{cor}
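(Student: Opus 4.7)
The plan is to derive the corollary in two moves, applying parts (a) and (c) of Fubini's Theorem in succession, with $|f|$ serving as the bridge. Nothing substantive beyond the statement of Theorem 2.3 is needed, so I expect this to be essentially bookkeeping rather than to contain any genuine obstacle.

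First, I would note that since $f$ is $(S\times T)$-measurable (in either the $\C$-valued or $[-\infty,\infty]$-valued case), the nonnegative extended-real-valued function $|f|:X\times Y\to[0,\infty]$ is also $(S\times T)$-measurable; indeed in the complex case this was recorded already in the measurability paragraph of Section~2, and in the extended-real case it follows from $|f|=\max\{f,-f\}$ together with the measurability of $\max$ and scalar multiples. Hence part~(a) of Theorem~2.3 applies to $|f|$ in place of $f$, yielding that $\phi^{\ast}:x\mapsto\int_Y|f_x|\,d\lambda$ is $S$-measurable and
\[
\int_{X\times Y}|f|\,d(\mu\times\lambda)=\int_X\phi^{\ast}\,d\mu.
\]

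Next, the hypothesis of the corollary is precisely $\int_X\phi^{\ast}\,d\mu<\infty$, so the displayed equality shows $\int_{X\times Y}|f|\,d(\mu\times\lambda)<\infty$. In the complex-valued case this means $f\in L^1(\mu\times\lambda)$ directly; in the $[-\infty,\infty]$-valued case one also observes that $|f|<\infty$ almost everywhere on $X\times Y$ (otherwise its integral would be infinite), so one may redefine $f$ on a null set to be $\C$-valued without changing any of the assertions, and again $f\in L^1(\mu\times\lambda)$.

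Once $f\in L^1(\mu\times\lambda)$ is established, part~(c) of Theorem~2.3 is immediately available and delivers the remaining conclusions: the existence of null sets $Z\in S$ and $N\in T$ outside which the sections $f_x$ and $f^y$ are integrable, the integrability of the iterated-integral functions $\phi$ and $\psi$ (extended by zero on $Z$ and $N$ respectively), and the equalities asserted in (2.1). Since the statement of the corollary asks for nothing beyond what part~(c) supplies, the proof ends here. The only point that needs any care, and so the closest thing to an obstacle, is the passage through $|f|$ to apply the nonnegative version (a) before invoking the complex version (c); the hypothesis is crafted exactly to make this bridge work.
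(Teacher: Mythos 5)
Your argument is correct, and since the paper states Corollary~2.4 without a separate proof there is no explicit author's proof to compare against; what you wrote is essentially the natural derivation. One small economy is available, though: for the $\C$-valued case your steps 1--2 are a verbatim re-derivation of part~(b) of Theorem~2.3, whose hypothesis is literally the corollary's displayed condition, so in that case you could simply cite~(b) to get $f\in L^1(\mu\times\lambda)$ and then invoke~(c). Your detour through~(a) applied to $|f|$ is what one actually needs for the $[-\infty,\infty]$-valued alternative, since~(b) as stated covers only $\C$-valued $f$; and there your extra remark --- that $\int_{X\times Y}|f|\,d(\mu\times\lambda)<\infty$ forces $|f|<\infty$ a.e., so $f$ can be adjusted on a null set to become $\C$-valued without affecting any of the integrals or section functions in~(c) --- is exactly the point that deserves to be spelled out, because $L^1(\mu\times\lambda)$ is defined in Section~2 only for $\C$-valued functions. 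In short: correct, and the only remark worth adding is that you could shortcut the complex case via~(b) while reserving the~(a)-plus-finiteness argument for the extended-real case.
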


\section{Bounded Borel functions on the initial interval}


Let $B\supset C_c$ denote the normed linear space of bounded Borel functions
$I\to\C$. Let $\mu$ be a complex regular Borel measure on $I$, and
let $H\in B$ be given with $|H(t)|=1$ on the unit circle and
$$
\int fd\mu=\int fHd|\mu|\,\,\text{for all}\,\,f\in L^1(\mu).
$$

\begin{prop}
(i) $B\subset L^1(|\mu|)$.

(ii) Suppose the sequence $(\phi_j)_1^{\infty}$ in $B$ converges
pointwise to $\phi:I\to\C$, and $\sup_{j,t}|\phi_j(t)|<\infty$. Then
$\phi\in B$ and
$$
\int\phi_jd\mu\to\int\phi d\mu\,\,\text{as}\,\,j\to\infty.
$$
\end{prop}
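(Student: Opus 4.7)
The plan is to reduce both assertions to standard facts from Section 2, most crucially Lebesgue's Dominated Convergence Theorem. The key point making everything work is that $|\mu|$ is a finite positive Borel measure on the compact interval $I$: from the definition recalled in Section 2 we have $|\mu|(I)<\infty$, so the constant function $1$ is integrable, and hence so is every bounded Borel function.

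For part (i), I would simply observe that any $\phi\in B$ is Borel measurable by definition, and if $M=\sup_{t\in I}|\phi(t)|<\infty$ then $\int_I|\phi|\,d|\mu|\le M\,|\mu|(I)<\infty$, so $\phi\in L^1(|\mu|)$.

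For part (ii), I would argue in two steps. First, Borel measurability of the pointwise limit $\phi$: this was explicitly recorded in Section 2 (limits of pointwise convergent sequences of measurable functions are measurable), and boundedness of $\phi$ is immediate because $|\phi(t)|=\lim_{j\to\infty}|\phi_j(t)|\le\sup_{k,s}|\phi_k(s)|<\infty$; hence $\phi\in B$. Second, for the integral convergence, I would apply Theorem 2.1 to the sequence of functions $\phi_jH:I\to\C$. These are measurable (products of measurable functions), they converge pointwise to $\phi H$ on $I$, and they are uniformly dominated by the constant function $g:=\sup_{k,s}|\phi_k(s)|$, which lies in $L^1(|\mu|)$ because $|\mu|(I)<\infty$ and $|H|\equiv 1$. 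Dominated convergence then gives
$$\int_I\phi_jH\,d|\mu|\to\int_I\phi H\,d|\mu|,$$
which by the defining relation $d\mu=H\,d|\mu|$ and part (i) is exactly $\int\phi_j\,d\mu\to\int\phi\,d\mu$.

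There is no real obstacle here: the only thing one has to be careful about is that pointwise convergence is assumed everywhere (as emphasized in the Introduction), so Theorem 2.1 applies verbatim without needing to modify functions on null sets. Both conclusions follow once one invokes finiteness of $|\mu|$ and DCT.
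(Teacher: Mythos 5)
Your proof is correct and follows essentially the same route as the paper: finiteness of $|\mu|(I)$ for part (i), and an application of Theorem 2.1 to the sequence $\phi_j H$ dominated by a constant for part (ii). The only difference is that you spell out explicitly why the pointwise limit $\phi$ lies in $B$, which the paper leaves implicit.
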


\begin{proof}
1. On (i). For every $c\ge0$ the constant function $c\chi_I$ is
continuous, hence Borel-measurable, so belongs to $B$. As
$|\mu|(I)<\infty$,
$$
\int c\chi_Id|\mu|=c|\mu|(I)<\infty.
$$
For $\phi\in B$ we get $|\phi|\in B$ and (as $|\mu|$ is a Borel
measure)
$$
\int|\phi|d|\mu|\le\sup_t|\phi(t)|\int\chi_Id|\mu|<\infty,
$$
hence $\phi\in L^1(|\mu|)$.

2. On (ii). The sequence $(\phi_jH)_1^{\infty}$ in $B$ converges
pointwise to $\phi H$, and for all $j\in\N$ and $t\in I$,
$|(\phi_jH)(t)|\le\sup_{j,t}|\phi_j(t)|<\infty$; constant functions
belong to $B\subset L^1(|\mu|)$ (see (i)). Theorem 2.1 (the
Lebesgue Dominated Convergence Theorem) yields $\phi H\in
L^1(|\mu|)$ and $\int\phi_jd\mu=\int\phi_jHd|\mu|\to\int\phi
Hd|\mu|=\int\phi d\mu$ as $j\to\infty$.
\end{proof}

Let ${\mathcal L}\in C_c^{\ast}$ be given and let $\mu$ be the
complex Borel measure associated to ${\mathcal L}$ by Theorem 2.2 (the Riesz
Representation Theorem). We define a linear {\it extension} of ${\mathcal L}$
to a map on $B$ by
$$
{\mathcal L}_e\phi=\int\phi d\mu\quad (=\int\phi Hd|\mu|,\,\,\text{with}\,\,H\in
B\,\,\text{as above}).
$$

Proposition 3.1 (ii) yields the following result.

\begin{cor}
Let ${\mathcal L}\in C_c^{\ast}$ be given.
Suppose the sequence $(\phi_j)_1^{\infty}$ in $B$ converges
pointwise to $\phi:I\to\C$, and $\sup_{j,t}|\phi_j(t)|<\infty$. Then
$\phi\in B$ and
$$
{\mathcal L}_e\phi_j\to {\mathcal L}_e\phi\,\text{as}\,\,j\to\infty.
$$
\end{cor}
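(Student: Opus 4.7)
The plan is to observe that Corollary 3.2 is essentially a direct translation of Proposition 3.1(ii) through the definition of the extension $\mathcal{L}_e$. The work has already been done in that proposition; all that remains is to unwind the definitions.

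First, I would invoke the Riesz Representation Theorem (Theorem 2.2) applied to $\mathcal{L} \in C_c^\ast$: this provides the complex regular Borel measure $\mu$ on $I$ and a Borel function $H \colon I \to \C$ with $|H(t)| = 1$ such that $\mathcal{L} f = \int f \, d\mu = \int f H \, d|\mu|$ for every $f \in C_c$. These are precisely the data used in Section 3 to define $\mathcal{L}_e \phi = \int \phi \, d\mu$ on the larger domain $B$, so the setup of Proposition 3.1 applies verbatim to this $\mu$ and $H$.

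Next, I would apply Proposition 3.1(ii) directly to the sequence $(\phi_j)_1^\infty$ and its pointwise limit $\phi$. The hypotheses of the corollary — pointwise convergence everywhere on $I$, together with the uniform bound $\sup_{j,t}|\phi_j(t)| < \infty$ — are exactly what Proposition 3.1(ii) requires. It yields immediately that $\phi \in B$ and
\[
\int \phi_j \, d\mu \longrightarrow \int \phi \, d\mu \quad \text{as} \quad j \to \infty.
\]
By the definition of the extension, the left-hand side is $\mathcal{L}_e \phi_j$ and the right-hand side is $\mathcal{L}_e \phi$, which gives the claimed convergence.

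There is no real obstacle here; the corollary is a cosmetic reformulation of Proposition 3.1(ii) that packages the Riesz measure into the operator language $\mathcal{L} \mapsto \mathcal{L}_e$. The only thing one might be careful about is ensuring that the measure $\mu$ used in the definition of $\mathcal{L}_e$ is the same $\mu$ to which Proposition 3.1(ii) is being applied — but this is immediate from the definition of $\mathcal{L}_e$ just above the corollary. Measurability of $\phi$ (hence $\phi \in B$) follows because pointwise limits of measurable functions are measurable, and boundedness is inherited from the uniform bound on the $\phi_j$; both are already subsumed in the conclusion of Proposition 3.1(ii).
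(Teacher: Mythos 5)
Your proof is correct and matches the paper's approach exactly: the paper derives Corollary 3.2 as an immediate consequence of Proposition 3.1(ii) via the definition of the extension $\mathcal{L}_e$ through the Riesz measure, which is precisely what you do.
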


We proceed to vector-valued functions on $I$. Let $n\in\N$ and recall
$C_{cn}=C(I,\C^n)$ (with the maximum-norm), and define
$$
B_n=\{\phi\in I\times\C^n:\phi\,\,\text{is a map with}\,\,\phi_k\in
B\,\,\text{for}\,\,k=1,\ldots,n\}.
$$
$B_n\supset C_{cn}$ is a normed linear space over the field $\C$. It will be convenient
to equip $B_n$ with the supremum-norm given by
$$
|\phi|=\sup_t|\phi(t)|.
$$

Let $\lambda\in C_{cn}^{\ast}$ be given. We define a linear
{\it extension} $\lambda_e:B_n\to\C$ as follows. For
$k\in\{1,\ldots,n\}$ let $E_k:C_c\to C_{cn}$ denote the linear
continuous map (embedding) given by $(E_k\chi)_j=\delta_{jk}\chi$,
for $j=1,\ldots,n$.
For $k=1,\ldots,n$ define
$$
\lambda_k=\lambda\circ E_k,
$$
let $\mu_k$ denote the complex
Borel measure associated to $\lambda_k$ by Theorem 2.2, and let
$\lambda_{ke}:B\to\C$ denote the extension from above,
$$
\lambda_{ke}\chi=\int\chi d\mu_k\,\,\text{for}\,\,\chi\in B.
$$
Define $\lambda_e:B_n\to\C$ by
$$
\lambda_e\phi=\sum_{k=1}^n\lambda_{ke}\phi_k.
$$
Then $\lambda_e$ is linear, and for $\phi\in C_{cn}$ we have
\begin{eqnarray*}
\lambda_e\phi & = & \sum_{k=1}^n\lambda_{ke}\phi_k=\sum_{k=1}^n(\lambda\circ\,E_k)_e\phi_k=
\int\phi_kd\mu_k\\
& = & \sum_{k=1}^n(\lambda\circ\,E_k)\phi_k=\lambda\left(\sum_{k=1}^nE_k\phi_k\right)=\lambda\phi.
\end{eqnarray*}

In case of maps $\lambda_j\in C_{cn}^{\ast}$ and coefficients
$a_j\in \C$ for $j=1,\ldots,d$ we have
$$
\left(\sum_{j=1}^da_j\lambda_j\right)_e=\sum_{j=1}^da_j\lambda_{je}.
$$
We do not need this in the sequel and omit the proof.


\begin{cor}
Let $\lambda\in C_{cn}^{\ast}$.\\
(i) For every bounded sequence of maps $\phi_j\in B_n$, $j\in\N$, which converges pointwise to $\psi:I\to\C$ we have $\phi\in B_n$ and $\lambda_e\phi_j\to\lambda_e\psi$ as $j\to\infty$.

(ii) For every $\beta\in B_{nA}$,
$$
|\lambda_e\beta|\le|\lambda|_{C_{cn}^{\ast}}|\beta|.
$$
\end{cor}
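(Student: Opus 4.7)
The plan for part (i) is to reduce to the scalar case handled in Corollary 3.2. By the definition of $\lambda_e$, one has $\lambda_e\phi_j=\sum_{k=1}^n\lambda_{ke}(\phi_j)_k$ for each $j\in\N$. The hypothesis $\sup_{j,t}|\phi_j(t)|<\infty$ implies that every component sequence $((\phi_j)_k)_{j\in\N}$ is uniformly bounded in $B$ and converges pointwise to $\psi_k$. Since $\lambda_{ke}=(\lambda\circ E_k)_e$ with $\lambda\circ E_k\in C_c^{\ast}$, Corollary 3.2 yields $\psi_k\in B$ together with $\lambda_{ke}(\phi_j)_k\to\lambda_{ke}\psi_k$ for each $k\in\{1,\ldots,n\}$. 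Summing over $k$ gives $\psi\in B_n$ and $\lambda_e\phi_j\to\lambda_e\psi$.

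For part (ii) the plan is to approximate $\beta$ pointwise by continuous maps whose sup norm is controlled by $|\beta|$, apply the operator-norm inequality that is available on $C_{cn}$, and pass to the limit via part (i). Set $M=|\beta|$. Since each $\beta_k$ has property (A), there is a uniformly bounded sequence $(\psi^{(j)}_k)_{j\in\N}$ in $C_c$ converging pointwise to $\beta_k$; assembling the components yields a uniformly bounded sequence $\phi^{(j)}\in C_{cn}$ with $\phi^{(j)}(t)\to\beta(t)$ for every $t\in I$. To enforce the required norm bound, compose with the radial cutoff $r_M:\C^n\to\C^n$ defined by $r_M(z)=z$ for $|z|\le M$ and $r_M(z)=Mz/|z|$ otherwise; this map is continuous and satisfies $|r_M(z)|\le M$ on all of $\C^n$. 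The modified sequence $\tilde\phi^{(j)}=r_M\circ\phi^{(j)}$ lies in $C_{cn}$ and obeys $|\tilde\phi^{(j)}|\le M=|\beta|$. Because $|\beta(t)|\le M$ for every $t\in I$, the map $r_M$ fixes $\beta(t)$, so continuity of $r_M$ yields $\tilde\phi^{(j)}\to\beta$ pointwise on $I$.

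It then remains to close the loop. For each $j$, continuity of $\tilde\phi^{(j)}$ gives $\lambda_e\tilde\phi^{(j)}=\lambda\tilde\phi^{(j)}$ (as established in the paragraph preceding Corollary 3.3), hence $|\lambda_e\tilde\phi^{(j)}|\le|\lambda|_{C_{cn}^{\ast}}|\tilde\phi^{(j)}|\le|\lambda|_{C_{cn}^{\ast}}|\beta|$. Part (i) applied to $(\tilde\phi^{(j)})_{j\in\N}$ delivers $\lambda_e\tilde\phi^{(j)}\to\lambda_e\beta$, and the inequality passes to the limit. The main obstacle I anticipate is exactly this construction step: an arbitrary uniformly bounded pointwise approximant of $\beta$ need not satisfy a norm bound in terms of $|\beta|$, and the radial cutoff is the clean device that delivers both the sup-norm control and the preservation of pointwise convergence, using that $B_n$ and $C_{cn}$ carry the same sup norm.
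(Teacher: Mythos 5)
Your proof is correct and follows essentially the same route as the paper: part (i) reduces to Corollary 3.2 componentwise, and part (ii) approximates $\beta$ pointwise by continuous maps with $|\phi_j|\le|\beta|$, applies the bound on $C_{cn}$, and passes to the limit via (i). The one difference is that the paper simply asserts the existence of such a norm-controlled approximating sequence, while you explicitly construct it by postcomposing with the radial cutoff $r_M$ — a legitimate and clean way to fill in the detail.
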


\begin{proof}
1. Proof of (i). For every $k\in\{1,\ldots,n\}$, $\phi_{jk}\to\psi_k$ pointwise as $j\to\infty$. By Corollary 3.2, $\psi_k\in B$. Hence $\psi\in B_n$,
and for $j\to\infty$, again by Corollary 3.2,
$$
\lambda_e\phi_j=\sum_{k=1}^n(\lambda\circ E_k)_e\phi_{jk}\to\sum_{k=1}^n(\lambda\circ E_k)\psi_k=\lambda_e\psi.
$$

2. Proof of (ii). There exists a sequence of maps $\phi_j\in C_{cn}$, $j\in\N$, which converges pointwise to $\beta$ and satisfies
$|\phi_j|\le|\beta|$ for all $j\in\N$. Using assertion(i) we infer
$$
|\lambda_e\beta|=\lim_{j\to\infty}|\lambda\phi_j|\le|\lambda|_{C_{cn}^{\ast}}\sup_{j}|\phi_j|\le|\lambda|_{C_{cn}^{\ast}}|\beta|.
$$
\end{proof}

We proceed to extensions of maps $\Lambda\in L_c(C_{cn},Y)$ into a normed linear space $Y$
over the field $\C$ and begin with the map $\Lambda_0:B_{nA}\to\C^{Y^{\ast}}$ given by
$$
(\Lambda_0\phi)y^{\ast}=(y^{\ast}\circ\Lambda)_e\phi.
$$
The desired extension is the map $\Lambda_E:B_{nA}\to Y^{\ast\ast}$ from the next result.



\begin{prop}
Let $\Lambda\in L_c(C_{cn},Y)$ be given.

(i) For every $\phi\in B_{nA}$ the map $\Lambda_0\phi:Y^{\ast}\to\C$ is linear and continuous with
$$
|(\Lambda_0\phi)y^{\ast}|\le|y^{\ast}||\Lambda|_{L_c(C_{cn},Y)}|\phi|.
$$

(ii)
The map
$$
\Lambda_E:B_{nA}\ni\phi\mapsto\Lambda_0\phi\in Y^{\ast\ast}
$$
is linear and continuous with
$$
|\Lambda_E|_{L_c(B_{nA},Y^{\ast\ast})}\le|\Lambda|_{L_c(C_{cn},Y)}.
$$

(iii) For each $\phi\in C_{cn}$,
$$
\Lambda_E\phi=(\iota\circ\Lambda)\phi.
$$
\end{prop}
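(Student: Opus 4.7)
The plan is to reduce each assertion to the scalar-valued extension machinery of Section~3 by viewing the defining formula $(\Lambda_0\phi)y^{\ast}=(y^{\ast}\circ\Lambda)_e\phi$ as the application, to $\phi\in B_{nA}$, of the extension of the functional $y^{\ast}\circ\Lambda\in C_{cn}^{\ast}$. Because $\Lambda\in L_c(C_{cn},Y)$, composition with $y^{\ast}\in Y^{\ast}$ gives $|y^{\ast}\circ\Lambda|_{C_{cn}^{\ast}}\le|y^{\ast}|\,|\Lambda|_{L_c(C_{cn},Y)}$, and this one inequality together with Corollary~3.3 supplies all the norm estimates.

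For part (i), I would fix $\phi\in B_{nA}$ and choose, as in the proof of Corollary~3.3(ii), a sequence $(\phi_m)$ in $C_{cn}$ with $\phi_m\to\phi$ pointwise and $|\phi_m|\le|\phi|$ for all $m\in\N$. Applying Corollary~3.3(i) to the functional $y^{\ast}\circ\Lambda\in C_{cn}^{\ast}$, and using that on $C_{cn}$ its extension agrees with itself, gives
$$
(\Lambda_0\phi)y^{\ast}=(y^{\ast}\circ\Lambda)_e\phi=\lim_{m\to\infty}(y^{\ast}\circ\Lambda)_e\phi_m=\lim_{m\to\infty}y^{\ast}(\Lambda\phi_m).
$$
The right-hand side depends linearly on $y^{\ast}$ for every fixed $m$, and linearity is preserved under pointwise limits of functionals, so $\Lambda_0\phi$ is linear on $Y^{\ast}$. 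The norm estimate then follows from Corollary~3.3(ii) applied with $\lambda=y^{\ast}\circ\Lambda$:
$$
|(\Lambda_0\phi)y^{\ast}|\le|y^{\ast}\circ\Lambda|_{C_{cn}^{\ast}}\,|\phi|\le|y^{\ast}|\,|\Lambda|_{L_c(C_{cn},Y)}\,|\phi|,
$$
which shows both that $\Lambda_0\phi\in Y^{\ast\ast}$ and the bound claimed in (i).

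Part (ii) is then almost immediate. Linearity of $\Lambda_E$ in $\phi$ holds because, for each fixed $y^{\ast}$, the map $(y^{\ast}\circ\Lambda)_e:B_n\to\C$ is by construction a finite sum of integrals of components against Borel measures, hence linear in its argument; evaluating $\Lambda_E(a\phi+b\psi)$ on arbitrary $y^{\ast}$ therefore yields $a\,\Lambda_E\phi+b\,\Lambda_E\psi$. Taking the supremum over $|y^{\ast}|\le1$ in the estimate of (i) gives $|\Lambda_E\phi|_{Y^{\ast\ast}}\le|\Lambda|_{L_c(C_{cn},Y)}\,|\phi|$, which is the desired operator norm bound. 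For part (iii), if $\phi\in C_{cn}$ then the computation carried out in Section~3 for $\lambda_e$ shows $(y^{\ast}\circ\Lambda)_e\phi=(y^{\ast}\circ\Lambda)\phi$, so
$$
(\Lambda_E\phi)y^{\ast}=y^{\ast}(\Lambda\phi)=\iota(\Lambda\phi)(y^{\ast})\quad\text{for every}\quad y^{\ast}\in Y^{\ast},
$$
which is the identity $\Lambda_E\phi=(\iota\circ\Lambda)\phi$ in $Y^{\ast\ast}$.

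The only point that is not pure unpacking of definitions is the existence of an approximating sequence $(\phi_m)$ in $C_{cn}$ with $|\phi_m|\le|\phi|$: property (A) only provides some uniform bound, and to bring it down to $|\phi|$ one truncates, e.g.\ by scaling $\phi_m(t)$ by $\min\{1,|\phi|/|\phi_m(t)|\}$ with the convention $0/0=0$, which preserves continuity and pointwise convergence. Since this is the same device used in the proof of Corollary~3.3(ii), I would simply invoke that argument here; everything else is bookkeeping combined with Corollary~3.3.
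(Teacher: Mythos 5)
Your proof is correct and follows the same overall strategy as the paper: everything is reduced to Corollary 3.3 applied to $y^{\ast}\circ\Lambda\in C_{cn}^{\ast}$, and parts (ii) and (iii) coincide with the paper's proof essentially verbatim. The one genuine point of divergence is how linearity of $\Lambda_0\phi$ in $y^{\ast}$ is established. The paper dismisses this as ``obviously linear,'' which in fact leans on linearity of the extension map $\lambda\mapsto\lambda_e$ on $C_{cn}^{\ast}$ --- a fact the paper records after defining $\lambda_e$ but then explicitly declines to prove (``we do not need this in the sequel''). Your limiting argument --- writing $(\Lambda_0\phi)y^{\ast}=\lim_m y^{\ast}(\Lambda\phi_m)$ for a fixed approximating sequence $\phi_m\in C_{cn}$ and observing that a pointwise limit of linear functionals is linear --- sidesteps that dependence entirely and is a small but real improvement in rigor. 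Two side remarks: for the linearity claim alone, any uniformly bounded approximating sequence suffices, so the truncation to $|\phi_m|\le|\phi|$ is only needed for the norm estimate (where, as you note, you may as well cite Corollary 3.3(ii) directly rather than repeat the bound); and the radial truncation $\phi_m(t)\min\{1,|\phi|/|\phi_m(t)|\}$ you propose does indeed preserve continuity and pointwise convergence, which is the same unstated device underlying the paper's proof of Corollary 3.3(ii).
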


\begin{proof}
1. Proof of (i). Let $\phi\in B_{nA}$. The map $\Lambda_0\phi:Y^{\ast}\to\C$ is obviously linear, and for every $y^{\ast}\in Y^{\ast}$ we have
$$
|(\Lambda_0\phi)y^{\ast}|=|(y^{\ast}\circ\Lambda)_e\phi|\le|y^{\ast}\circ\Lambda|_{L_c(C_{cn},\C)}|\phi|\le|y^{\ast}||\Lambda|_{L_c(C_{cn},Y)}|\phi|
$$
because of Corollary 3.3 (ii). This yields continuity.

2. Proof of (ii). The map $\Lambda_E$ is obviously linear. The estimate from part (i) yields
$$
\sup_{\phi\in B_{nA}:|\phi|\le1}|\Lambda_E\phi|=\sup_{\phi\in B_{nA}:|\phi|\le1}\sup_{y^{\ast}\in Y^{\ast}:|y^{\ast}|\le1}|(\Lambda_E\phi)y^{\ast}|
\le|\Lambda|_{L_c(C_{cn},Y)},
$$
from which the assertions follow.

3. Proof of (iii). We have
$$
(\Lambda_E\phi)y^{\ast}=(y^{\ast}\circ\Lambda)_e\phi=y^{\ast}(\Lambda\phi)=(\iota(\Lambda\phi))y^{\ast}=((\iota\circ\Lambda)\phi)y^{\ast}
$$
for all $\phi\in C_{cn}$ and $y^{\ast}\in Y^{\ast}$.
\end{proof}

Next we consider curves and transformations induced by maps $\Lambda_E$.

\begin{prop}
Let $\Lambda\in L_c(C_{cn},Y)$ and intervals
$J\subset\R$ and $J_0\subset\R$ be given so that for all
$t\in J_0$ we have $[t-h,t]\subset J$. Let $v:J\to\C^n$ be
given.

(i) If $v$ is continuous then $v_t\in C_{cn}\subset B_{nA}$ for all
$t\in J_0$, and the map
$$
w:J_0\ni t\mapsto\Lambda_Ev_t\in Y^{\ast\ast}
$$
is continuous with values in $\iota Y$.

(ii) Suppose there is a sequence of continuous functions
$v_m:J\to\C^n$, $m\in\N$, which converges pointwise to $v$, and
$\sup_{m\in\N,t\in J}|v_m(t)|<\infty$. Then $v_t\in B_{nA}$ for all
$t\in J_0$, and the sequence of continuous functions
$$
w_m:J_0\ni t\mapsto\Lambda_Ev_{mt}\in Y^{\ast\ast},\,\,m\in\N,
$$
converges pointwise to
$$
w:J_0\ni t\mapsto\Lambda_Ev_t\in Y^{\ast\ast},
$$
with respect to the weak-star topology (of pointwise convergence) on $Y^{\ast\ast}$, and
$$
\sup_{m\in\N,t\in J_0}|w_m(t)|<\infty.
$$

(iii)
In case $\Lambda=\lambda\in C_{cn}^{\ast}$ we obtain that the map
$$
w:J_0\ni t\mapsto \lambda_ev_t\in\C
$$
has property (A).
\end{prop}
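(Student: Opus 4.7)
My plan is to handle the three parts in order, using Proposition 3.4(iii) (which gives $\Lambda_E\phi=\iota(\Lambda\phi)$ whenever $\phi\in C_{cn}$) and Corollary 3.3(i) (bounded pointwise convergence for the scalar extensions $(y^{\ast}\circ\Lambda)_e$) as the two main tools. For (i), continuity of $v$ on $J$ combined with $[t-h,t]\subset J$ immediately gives $v_t\in C_{cn}$, and a standard uniform-continuity argument on compact subintervals of $J$ (for any $t_0\in J_0$, choose a compact neighborhood of $t_0$ in $J_0$; on the corresponding compact subinterval of $J$ swept out by the segments $[t-h,t]$, $v$ is uniformly continuous) shows that $J_0\ni t\mapsto v_t\in C_{cn}$ is continuous. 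Since by Proposition 3.4(iii) one has $w(t)=\Lambda_E v_t=\iota(\Lambda v_t)$, the map $w$ is a composition of continuous maps and takes values in $\iota Y$.

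For (ii), each component $v_{t,k}$ is the pointwise limit on $I$ of the continuous functions $v_{mt,k}$, which are uniformly bounded by $\sup_{m,s}|v_m(s)|<\infty$; hence $v_{t,k}$ has property (A) and $v_t\in B_{nA}$. Continuity of each $w_m$ follows by applying (i) to $v_m$, and the uniform bound is
\[
|w_m(t)|=|\iota(\Lambda v_{mt})|=|\Lambda v_{mt}|\le|\Lambda|_{L_c(C_{cn},Y)}\sup_{m,s}|v_m(s)|.
\]
For the weak-star convergence, fix $t\in J_0$ and $y^{\ast}\in Y^{\ast}$; by the definition of $\Lambda_E$,
\[
(w_m(t))y^{\ast}=(y^{\ast}\circ\Lambda)_e v_{mt}\quad\text{and}\quad (w(t))y^{\ast}=(y^{\ast}\circ\Lambda)_e v_t.
\]
Since $y^{\ast}\circ\Lambda\in C_{cn}^{\ast}$, and $v_{mt}\to v_t$ pointwise on $I$ with a uniform bound, Corollary 3.3(i) gives $(w_m(t))y^{\ast}\to (w(t))y^{\ast}$, which is exactly the claimed weak-star convergence in $Y^{\ast\ast}$.

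For (iii), specialize (ii) to $Y=\C$ and $\Lambda=\lambda$. Under the canonical isomorphism $\C^{\ast\ast}\cong\C$ given by evaluation at $\id_{\C}$, the operator extension $\lambda_E$ of Proposition 3.4 is identified with the scalar extension $\lambda_e$ of Section 3, because
\[
(\lambda_E\phi)(\id_{\C})=(\id_{\C}\circ\lambda)_e\phi=\lambda_e\phi.
\]
Under this identification, the weak-star convergence from (ii) becomes ordinary convergence $\lambda v_{mt}\to\lambda_e v_t$ in $\C$, and the continuous, uniformly bounded approximants $t\mapsto\lambda v_{mt}$ exhibit $w$ as a function with property (A). The main obstacle I anticipate is not conceptual but notational: one has to keep the two flavors of extension (the scalar $\lambda_e$ on $B_n$ and the operator-valued $\Lambda_E$ into $Y^{\ast\ast}$) cleanly identified via $\iota$ in the $Y=\C$ case so that (iii) really is a direct consequence of (ii), and one has to verify with care that the pointwise convergence hypothesis on $v$ transfers to pointwise convergence of the segments $v_{mt}$ in $B_{nA}$ uniformly in $t$.
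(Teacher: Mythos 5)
Your proposal is correct and follows essentially the same route as the paper: part (i) via Proposition 3.4(iii) and continuity of the segment map, part (ii) via the uniform bound from Proposition 3.4(ii) together with Corollary 3.3(i), and part (iii) by the same pointwise argument with $\lambda$ in place of $\Lambda$. Your framing of (iii) as a specialization of (ii) to $Y=\C$ with the identification $(\lambda_E\phi)(\id_{\C})=\lambda_e\phi$ is a slightly more economical packaging of what the paper does directly, but the underlying steps are identical.
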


\begin{proof}
1. Proof of (i). The curve $J_0\ni t\mapsto v_t\in C_{cn}$ is continuous. Proposition 3.4 (iii) gives
$$
\Lambda_Ev_t=(\iota\circ\Lambda)v_t
$$
on $J_0$. Now continuity of $w$ and $w(J_0)\subset\iota Y$ are obvious.

2. Proof of(ii). Obviously, $v_t\in B_{nA}$ on $J_0$. Proposition 3.4 (ii) gives
$$
|w_m(t)|=|\Lambda_Ev_{mt}|\le|\Lambda|_{L_c(C_{cn},Y)}|v_{mt}|\le|\Lambda|_{L_c(C_{cn},Y)}\sup_{j\in\N,t\in J}|v_j(t)|<\infty
$$
for all $m\in\N$ and $t\in J_0$. Using Corollary 3.3 (i) we see that for each $t\in J_0$ and for all $y^{\ast}\in Y^{\ast}$ we have
$$
(\Lambda_Ev_{mt})y^{\ast}=(y^{\ast}\circ\Lambda)_ev_{mt}\to(y^{\ast}\circ\Lambda)_ev_t=(\Lambda_Ev_t)y^{\ast}.
$$
as $m\to\infty$.

3. Proof of (iii).  We proceed as in part 2. The curves $J_0\ni t\mapsto v_{mt}\in C_{cn}$, $m\in\N$, are continuous, and for such $m$ and $t\in J_0$,
 $\lambda_ev_{mt}=\lambda v_{mt}$ . It follows that the maps $w_m:J_0\ni t\mapsto \lambda_ev_{mt}\in\C$, $m\in\N$, are continuous.  Corollary 3.3 (ii) yields
$$
|w_m(t)|=|\lambda_ev_{mt}|\le|\lambda|_{C_{cn}^{\ast}}|v_{mt}|\le|\lambda|_{C_{cn}^{\ast}}\sup_{j\in\N,t\in J}|v_j(t)|<\infty
$$
for all $m\in\N$ and $t\in J_0$. For each $t\in J_0$ Corollary 3.3 (i) gives
$$
w_m(t)=\lambda_ev_{mt}\to\lambda_ev_t=w(t)\quad\text{as}\quad m\to\infty.
$$
\end{proof}

We may say that Proposition 3.5 (ii) transfers property
(A) from $v$ to $w$.


\begin{prop}
(i) (Approximation of piecewise continuous maps) Suppose
$v:J\to\C^n$, $J\subset\R$ an interval, has continuous restrictions
to $(-\infty,t)\cap J$ and to $[t,\infty)\cap J$, for some $t\in J$.
Then $v$ has property (A).

(ii) (Concatenation) Let $J$ and $K$ be adjacent intervals, $\max\,
J=a=\min \,K$. Suppose $v:J\to\C^n$ and $w:K\to\C^n$ with
$v(a)=w(a)$ both have property (A). Then also the concatenation
$vw:J\cup K\to\C^n$ given by $vw|J=v,vw|K=w$ has property (A).
\end{prop}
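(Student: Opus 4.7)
The plan is to build approximating sequences of continuous functions explicitly: in part (i) by smoothing the single jump with a linear interpolation on a shrinking interval, and in part (ii) by concatenating the given approximants after an additive correction that forces them to agree at the gluing point.

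For part (i), I may assume $v$ is bounded (the conclusion forces this, since a uniformly bounded pointwise limit is bounded) and that $t$ is not the left endpoint of $J$ (otherwise $(-\infty,t)\cap J$ is empty, $v$ is continuous on all of $J$, and the constant sequence $v_m=v$ already works). For each $m\in\N$ large enough that $t-1/m\in J$, set $v_m(s)=v(s)$ for $s\le t-1/m$ and for $s\ge t$, and interpolate linearly between $v(t-1/m)$ and $v(t)$ on $[t-1/m,t]$. Then $v_m$ is continuous on $J$, because the two restrictions of $v$ are continuous by hypothesis and the linear segment matches the adjacent values at both endpoints. For $s\ge t$ one has $v_m(s)=v(s)$ for all admissible $m$; for $s<t$, eventually $s<t-1/m$ and hence $v_m(s)=v(s)$. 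So $v_m\to v$ pointwise. The interpolation is a convex combination of two values of $v$, whence $|v_m(s)|\le\sup_J|v|<\infty$, giving uniform boundedness.

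For part (ii), let $(v_m)$ and $(w_m)$ be uniformly bounded sequences of continuous maps with $v_m\to v$ pointwise on $J$ and $w_m\to w$ pointwise on $K$. The values $v_m(a)$ and $w_m(a)$ need not coincide, but both tend to $v(a)=w(a)$, so $c_m:=v_m(a)-w_m(a)\to 0$. Define
$$
u_m(s)=\begin{cases} v_m(s) & \text{for } s\in J,\\ w_m(s)+c_m & \text{for } s\in K.\end{cases}
$$
Both expressions yield $v_m(a)$ at $s=a$, so $u_m$ is well-defined and continuous on $J\cup K$. On $J$ the pointwise limit is $v$; on $K$, $u_m(s)=w_m(s)+c_m\to w(s)+0=w(s)=(vw)(s)$. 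Uniform boundedness is clear, since $|c_m|$ is dominated by twice a common bound on the $|v_m|$ and $|w_m|$.

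No serious obstacle arises. The only care needed is the endpoint case in part (i), settled by the observation above, and in part (ii) the simple recognition that an additive correction vanishing in the limit does not spoil pointwise convergence or uniform boundedness. Both constructions are manifestly component-wise, so it is immaterial whether the range is $\C$ or $\C^n$.
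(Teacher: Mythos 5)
Your proof is correct. In part (i) your construction is essentially identical to the paper's: linear interpolation of $v$ across a shrinking interval $[t-1/m,t]$, with the same handling of the left-endpoint case. In part (ii) you take a genuinely different and in fact cleaner route. The paper keeps $w_m$ fixed and modifies $v_m$ near $a$ by linear interpolation, replacing it on $(a-1/m,a]$ with the affine segment joining $v_m(a-1/m)$ to $w_m(a)$; this requires restricting attention to $m$ with $\inf J<a-1/m$ and a separate dispatch of the degenerate case $J=\{a\}$. Your additive correction $c_m=v_m(a)-w_m(a)\to 0$, applied uniformly on all of $K$, achieves continuity at $a$ in one stroke, works for every $m$ regardless of the length of $J$, and makes the pointwise convergence and the uniform bound immediate, with no case split needed. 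The paper's method has the cosmetic advantage that for each fixed point away from $a$ the approximant is eventually unchanged, whereas yours shifts $w_m$ by a vanishing constant everywhere on $K$ — but both deliver the required uniformly bounded pointwise-convergent sequence, so the tradeoff is purely aesthetic and your version is the more economical of the two.
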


\begin{proof}
1. On (i). In case $t=\inf\,J$ there is nothing to show. Suppose
$\inf\,J<t$. For $m\in\N$ with $t-\frac{1}{m}\in J$ we define the
components of maps $v_m:J\to\C^n$ as follows. Set
\begin{eqnarray*}
v_{mk}(t)=v_k(t) &
\text{on} & \left(-\infty,t-\frac{1}{m}\right)\cap J,\\
v_{mk}(t)=v_k(t) & \text{on} & [t,\infty)\cap J,\\
\end{eqnarray*}
and
$$
v_{mk}(s)=v_k(t)+\,m\,\left(v_k(t)-v_k\left(t-\frac{1}{m}\right)\right)(s-t)
$$
for all $s\in J$ with $t-\frac{1}{m}<s<t$. The sequence of the continuous maps $v_m$
is uniformly bounded and converges pointwise to $v$.

2. On (ii). In case $J=\{a\}$, $vw=w$, and there is nothing to show.
Suppose $\inf\,J<a$. Consider a uniformly bounded sequence
$(v_m)_{m=1}^{\infty}$ of continuous maps which converges pointwise
to $v$, and a uniformly bounded sequence $(w_m)_{m=1}^{\infty}$ of
continuous maps which converges pointwise to $w$. For each $m\in\N$
with $\inf\,J <a-\frac{1}{m}$ choose a continuous map
$\hat{v}_m:J\to\C^n$ with $\hat{v}_m(t)=v_m(t)$ on
$(-\infty,a-\frac{1}{m}]\cap J$ and
$$
\hat{v}_m(t)=w_m(a)+\frac{w_m(a)-v_m\left(a-\frac{1}{m}\right)}{\frac{1}{m}}(t-a)\quad\text{on}\quad\left(a-\frac{1}{m},a\right].
$$
Then the sequence of the continuous maps $\hat{v}_m$ converges
pointwise to $v$ (at $t=a$, $\hat{v}_m(a)=w_m(a)\to w(a)=v(a)$ as $m\to\infty$), and it is uniformly bounded. It follows that the
continuous maps $\hat{v}_mw_m$ are uniformly bounded and converge
pointwise to $vw$.
\end{proof}
\section{Weak-star integrals and extended operators}


Let $J\subset\R$
be an interval, let $Y$ be a normed linear space  over the field
$\C$, and let a map $F:J\to Y^{\ast\ast}$ be given. Let $m$ denote the Lebesgue measure on $\R$. Suppose for every $y^{\ast}\in Y^{\ast}$ the function $J\ni t\mapsto F(t)y^{\ast}\in\C$ is Lebesgue-integrable, i. e., Lebesgue-measurable with
$\int_J|F(t)y^{\ast}|dm(t)<\infty$. Then we say that the weak-star integral $\int_J Fdm
=\int_J F(t)dm(t)\in\C^{Y^{\ast}}$ of $F$ exists, and define
$$
\left(\int_J Fdm\right)y^{\ast}=\int_J F(t)y^{\ast}dm(t).
$$
For subintervals $J_1\subset J$ we define $\int_{J_1} Fdm=\int_{J_1}(F|J_1)dm$.


\begin{prop}
Suppose the weak-star integral of $F:J\to Y^{\ast\ast}$ exists. Then it is a linear map $Y^{\ast}\to\C$, for every subinterval $J_1\subset J$ $\int_{J_1}Fdm$ exists, and for reals $a<b<c$ in $J$ we have
$$
\int_{(a,c)}Fdm=\int_{(a,b)}Fdm+\int_{(b,c)}Fdm.
$$
Also, for every $d\in\R$, the shifted map $J+d\ni t\mapsto F(t-d)\in Y^{\ast\ast}$ has a weak-star integral, with
$$
\int_{(a,b)}F(s)dm(s)=\int_{(a+d,b+d)}F(t-d)dm(t)
$$

\end{prop}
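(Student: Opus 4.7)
The plan is to reduce every clause of the proposition to the corresponding standard property of the Lebesgue integral of complex-valued functions, by testing against an arbitrary $y^{\ast}\in Y^{\ast}$ through the defining identity
$$\left(\int_JFdm\right)y^{\ast}=\int_JF(t)y^{\ast}dm(t).$$

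First I would verify linearity of $\int_JFdm$ on $Y^{\ast}$: for $y^{\ast},z^{\ast}\in Y^{\ast}$ and $\alpha,\beta\in\C$ the function $t\mapsto F(t)(\alpha y^{\ast}+\beta z^{\ast})=\alpha F(t)y^{\ast}+\beta F(t)z^{\ast}$ is a $\C$-linear combination of Lebesgue-integrable functions, and linearity of the complex Lebesgue integral then yields the desired identity. Next, for existence on a subinterval $J_1\subset J$, the restriction of the measurable function $t\mapsto F(t)y^{\ast}$ to $J_1$ is again measurable, and $\int_{J_1}|F(t)y^{\ast}|dm\le\int_J|F(t)y^{\ast}|dm<\infty$ supplies integrability, so $\int_{J_1}Fdm$ exists by the very definition. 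The additivity identity then follows by testing against $y^{\ast}$, splitting $(a,c)$ into the pairwise disjoint union $(a,b)\cup\{b\}\cup(b,c)$ (the singleton having Lebesgue measure zero), and applying additivity of the Lebesgue integral over disjoint measurable sets.

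For the translation clause, the standard translation invariance of Lebesgue measure gives $\int_{(a+d,b+d)}|F(t-d)y^{\ast}|dm(t)=\int_{(a,b)}|F(s)y^{\ast}|dm(s)<\infty$, which shows that the shifted curve has a weak-star integral on $(a+d,b+d)$; applying the same substitution without absolute values and then unpacking the definition gives the claimed equality. There is really no serious obstacle here: every assertion is, pointwise in $y^{\ast}\in Y^{\ast}$, a direct transcription of a standard Lebesgue-integration fact, and the only care needed is to keep track of the defining identity so that measurability and integrability of the scalar functions $t\mapsto F(t)y^{\ast}$ remain the quantities being checked throughout.
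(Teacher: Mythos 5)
Your proof is correct, and since the paper dismisses this statement with ``The proof is straightforward'' and supplies no argument of its own, your write-up is precisely the expected filling-in: every clause reduces, by evaluating at an arbitrary $y^{\ast}\in Y^{\ast}$ and using that each $F(t)\in Y^{\ast\ast}$ is linear, to linearity, restriction, additivity over a partition modulo a null set, and translation invariance of the scalar Lebesgue integral. No further comment is needed.
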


The proof is straightforward.

\begin{prop}
(Sufficient condition for integrability) Suppose $f:J\to\C^n$, with an interval $J\subset\R$, has property (A), and $\Lambda\in L_c(C_{cn},Y)$. Let a subinterval $J_0\subset J$ be given with $[t-h,t]\subset J$ for all $t\in J_0$, and
let reals $a<b$ in $J_0$ be given.  Then the weak-star integral
$$
\int_{(a,b)}\Lambda_Ef_tdm(t)
$$
exists and belongs to $Y^{\ast\ast}$, and
$$
\left|\int_{(a,b)}\Lambda_Ef_tdm(t)\right|\le(b-a)|\Lambda|_{L_c(C_{cn},Y)}\sup_{a<t<b}|f(t)|.
$$
\end{prop}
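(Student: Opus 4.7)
The plan is to reduce everything to a scalar-by-scalar application of Lebesgue's Dominated Convergence Theorem, with the necessary pointwise convergence supplied by Proposition 3.5(ii). Since $f$ has property (A), I would fix once and for all a uniformly bounded sequence of continuous maps $f_m:J\to\C^n$, $m\in\N$, converging pointwise to $f$, and set $M=\sup_{m\in\N,\,s\in J}|f_m(s)|<\infty$.

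Proposition 3.5(ii) then packages the main work: $f_t\in B_{nA}$ for every $t\in J_0$, the continuous maps $w_m:J_0\ni t\mapsto\Lambda_E f_{mt}\in Y^{\ast\ast}$ are uniformly bounded (by $|\Lambda|_{L_c(C_{cn},Y)}M$, in view of Proposition 3.4(ii)), and $w_m(t)y^\ast\to(\Lambda_E f_t)y^\ast$ as $m\to\infty$ for every $y^\ast\in Y^\ast$ and every $t\in J_0$. Fix $y^\ast\in Y^\ast$; then the continuous functions $t\mapsto y^\ast(\Lambda f_{mt})$ on the bounded interval $(a,b)$ are uniformly bounded by the constant $|y^\ast|\,|\Lambda|_{L_c(C_{cn},Y)}M$, which lies in $L^1((a,b))$. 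Since the pointwise limit of measurable functions is measurable, Theorem 2.1 yields both Lebesgue-integrability of $t\mapsto(\Lambda_E f_t)y^\ast$ on $(a,b)$ and
\[
\int_{(a,b)}(\Lambda_E f_t)y^\ast\,dm(t)=\lim_{m\to\infty}\int_{(a,b)}y^\ast(\Lambda f_{mt})\,dm(t).
\]
This establishes existence of the weak-star integral as an element of $\C^{Y^\ast}$.

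To see it lies in $Y^{\ast\ast}$ and to obtain the norm estimate, I would use Proposition 3.4(i) to bound $|(\Lambda_E f_t)y^\ast|\le|y^\ast|\,|\Lambda|_{L_c(C_{cn},Y)}|f_t|$, integrate over $(a,b)$, and then take the supremum over $y^\ast$ with $|y^\ast|\le 1$. Bounding $|f_t|$ by the supremum of $|f|$ over the relevant range gives both continuity of the functional $y^\ast\mapsto\bigl(\int_{(a,b)}\Lambda_E f_t\,dm(t)\bigr)y^\ast$, and hence membership in $Y^{\ast\ast}$, and the claimed inequality.

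The principal obstacle is really a conceptual one, namely identifying the correct notion of measurability for the scalar functions $t\mapsto(\Lambda_E f_t)y^\ast$; but this is exactly what Proposition 3.5(ii) delivers by exhibiting these functions as pointwise limits of continuous (hence Borel-measurable) functions. Once that is in place, the argument becomes a routine application of Lebesgue Dominated Convergence on a bounded interval together with the already-established operator bounds from Proposition 3.4.
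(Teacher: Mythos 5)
Your argument is correct and follows essentially the same route as the paper's: reduce to the scalar functions $t\mapsto(\Lambda_E f_t)y^\ast=(y^\ast\circ\Lambda)_e f_t$, show they are bounded Borel functions by exhibiting them as pointwise limits of a uniformly bounded sequence of continuous functions, and then obtain the norm estimate from the pointwise bound $|(\Lambda_E f_t)y^\ast|\le|y^\ast|\,|\Lambda|\,|f_t|$. The only cosmetic difference is that the paper invokes Proposition~3.5(iii) to say at once that $t\mapsto(y^\ast\circ\Lambda)_e f_t$ has property~(A) (hence is a bounded Borel function, hence Lebesgue-integrable on $(a,b)$), whereas you re-derive this from Proposition~3.5(ii) and add a Dominated Convergence step that is not actually needed for integrability (boundedness plus measurability already suffices); the DCT limit you obtain is harmless extra information. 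One small point worth noticing: since $f_t$ involves the values of $f$ on $[t-h,t]$, the supremum controlling $|f_t|$ for $t\in(a,b)$ should run over $(a-h,b)$, which is what the paper's proof actually uses; the proposition statement's $\sup_{a<t<b}|f(t)|$ appears to be a typo, and your phrase ``the relevant range'' is compatible with the correct reading.
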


\begin{proof}
By Proposition 3.5 (ii), $f_t\in B_{nA}$ for all $t\in J_0$, and for each $y^{\ast}\in Y^{\ast}$ we have
$$
(\Lambda_Ef_t)y^{\ast}=(y^{\ast}\circ\Lambda)_ef_t\in\C.
$$
By Proposition 3.5 (iii) , applied to $y^{\ast}\circ\Lambda\in C_{cn}^{\ast}$, the map $(a,b)\ni t\mapsto(y^{\ast}\circ\Lambda)_ef_t\in\C$ has property (A). So it is a bounded Borel function, and therefore Lebesgue-integrable.
It follows that the weak-star integral $\int_{(a,b)}\Lambda_Ef_tdm(t):Y^{\ast}\to\C$  exists. It remains to show that this linear map maps bounded sets into bounded sets.  This follows from the estimate
$$
\left|\left(\int_{(a,b)}\Lambda_Ef_tdm(t)\right)y^{\ast}\right| = \left|\int_{(a,b)}(\Lambda_Ef_t)y^{\ast}dm(t)\right|= \left|\int_{(a,b)}(y^{\ast}\circ\Lambda)_ef_tdm(t)\right|
$$
$$
\le \int_{(a,b)}|(y^{\ast}\circ\Lambda)_ef_t|dm(t)
$$
$$
\le  (b-a)|y^{\ast}\circ\Lambda|_{C_{cn}^{\ast}}\sup_{a-h<t<b}|f(t)|\quad\text{(with Corollary 3.3 (ii))}
$$
$$
\le    (b-a)|y^{\ast}|_{Y^\ast}|\Lambda|_{L_c(C_{cn},Y)}\sup_{a-h<t<b}|f(t)|
$$
for all $y^{\ast}\in Y^{\ast}$.
\end{proof}

Lebesgue integration and extended continuous linear maps commute
in a sense which will be made precise. Below we derive such a result
for integrands in a special form as it will be needed in Section 6
and for further use.

Let reals $a<b$ be given, and a map $f:[a-h,b]\to\C^n$ with property (A) and with
$f(t)=0$ on $[a-h,a)$, and a continuous function $q:[a,b]\to\C$.
Then for every $t\in[a,b]$ all functions $[a,t]\ni s\mapsto
q(s)f_j(t-(s-a))\to\C$, $j\in\{1,\ldots,n\}$, have property (A).
So they are bounded Borel functions. Their restrictions to
intervals $(a,t)$, $a<t\le b$, are Lebesgue-integrable.
Therefore we have a map $g:[a-h,b]\to\C^n$ given by
$$
g(t)=\int_{(a,t)}q(s)f(t-s+a)dm(s)\quad\text{on}\quad(a,b]\quad\text{and}\quad g(t)=0\quad\text{on}\quad[a-h,a],
$$
with $g_j(t)=\int_{(a,t)}q(s)f_j(t-s+a)dm(s)$ for $a<t\le b$ and $j\in\{1,\ldots,n\}$.

\begin{prop}
$g$ is continuous.
\end{prop}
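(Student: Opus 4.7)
The plan is to prove continuity of $g$ by distinguishing the cases $t\in[a-h,a]$ and $t\in(a,b]$, and to reduce the interior case to a standard ``continuous kernel against a bounded Borel function'' argument by means of a change of variable. On $[a-h,a]$ the function $g$ vanishes identically, so continuity at every point of $[a-h,a)$ and continuity from the left at $a$ are immediate. The substance of the proof lies in showing that $g(t)\to 0$ as $t\searrow a$ and that $g$ is continuous at each $t_0\in(a,b]$.

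The key preliminary step is the substitution $u=t-s+a$ in the integral defining $g_j(t)$. Since $s\mapsto t-s+a$ is an affine bijection of $(a,t)$ onto itself and Lebesgue measure is translation- and reflection-invariant, for each $t\in(a,b]$ and $j\in\{1,\ldots,n\}$,
$$
g_j(t)=\int_{(a,t)}q(s)f_j(t-s+a)\,dm(s)=\int_{(a,t)}q(t-u+a)f_j(u)\,dm(u).
$$
In this reformulation the $t$-dependence sits entirely inside the continuous function $q$, while the possibly discontinuous $f_j$ is evaluated at the integration variable. Set $M=\sup_{u\in[a-h,b]}|f(u)|$ (finite by property (A)) and $K=\sup_{s\in[a,b]}|q(s)|$. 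Continuity at $a^+$ then follows from $|g_j(t)|\le KM(t-a)$. For $t_0\in(a,b]$ and $t_0<t\le b$, I would split
$$
g_j(t)-g_j(t_0)=\int_{(a,t_0)}[q(t-u+a)-q(t_0-u+a)]f_j(u)\,dm(u)+\int_{(t_0,t)}q(t-u+a)f_j(u)\,dm(u),
$$
bound the second piece in modulus by $KM(t-t_0)$, and the first piece by $(t_0-a)M\,\omega_q(|t-t_0|)$, where $\omega_q$ denotes the modulus of continuity of $q$ on the compact interval $[a,b]$. Both bounds vanish as $t\searrow t_0$, and the case $t\nearrow t_0$ is handled symmetrically.

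The main obstacle I anticipate is conceptual rather than computational: because $f_j$ is only bounded Borel, one cannot slip a limit in $t$ past the integrand in its original form and invoke continuity of $f_j$. Everything rests on transferring the $t$-dependence onto $q$ via the affine substitution; once that is done the remaining estimates use nothing beyond uniform continuity of $q$ on $[a,b]$ together with the uniform bound $M$. It is worth noting that the continuous approximants guaranteed by property (A) are never directly invoked—property (A) enters the argument purely as a source of the bound $M$.
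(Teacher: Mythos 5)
Your proof is correct and follows essentially the same route as the paper's: the same affine substitution $u=t-s+a$ to move the $t$-dependence onto the continuous kernel $q$, the same two-term decomposition of $g(t)-g(t_0)$, and the same estimates via the uniform continuity of $q$ and the boundedness of $f$. Your closing remark that property (A) enters only through the bound $M$ is accurate for this proposition (Borel measurability of the integrand was already settled in the paragraph preceding it).
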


\begin{proof}
$g|[a-h,a]$ is constant, hence continuous. Boundedness of the integrand in the formula defining $g$ on $(a,b]$
yields that at $t=a$ $g$ is continuous from the right. In order to show continuity at $t_0\in(a,b]$ observe first that
for every $t\in(a,b]$ substitution yields
$$
g(t)=\int_{(a,t)}q(t-w+a)f(w)dm(w).
$$
For $a<u<t\le b$ we obtain
\begin{eqnarray*}
|g(t)-g(u)| & = &
\left|\int_{(u,t)}q(t-w+a)f(w)dm(w)\right.\\
& & \left.-\int_{(a,u)}(q(u-w+a)-q(t-w+a))f(w)dm(w)\right|\\
& \le & |t-u|\max_{a\le v\le t}|q(v)|\sup_{a\le w\le t}|f(w)|\\
& & +\int_{(a,u)}|q(u-w+a)-q(t-w+a)||f(w)|dm(w).
\end{eqnarray*}
This estimate, the uniform continuity of $q:[a,b]\to\C$, and the boundedness of $f$ can now be
used to get continuity of $g$ at $t_0\in(a,b]$.
\end{proof}

\begin{prop}
Let $\lambda\in C_{cn}^{\ast}$ and $a<t\le b$. All maps $q(s)f_{t-s+a}$, $a\le s\le t$, belong to $B_{nA}$, the map
$$
 (a,t)\ni s\mapsto\lambda_e(q(s)f_{t-s+a})\in\C
$$
has property (A), and  is Lebesgue-integrable.
\end{prop}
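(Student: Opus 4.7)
The plan is to exploit the property-(A) approximation of $f$ and push it through the segment construction, the multiplication by the continuous scalar $q(s)$, and finally the extended functional $\lambda_e$, exactly in the spirit of Proposition 3.5 (iii). I would start by fixing a uniformly bounded sequence of continuous maps $f_m:[a-h,b]\to\C^n$ with $f_m\to f$ pointwise, which exists by property (A) of $f$. For each $s\in[a,t]$ one has $[t-s+a-h,\,t-s+a]\subset[a-h,b]$, so the segment $f_{m,t-s+a}\in C_{cn}$ is well defined, the sequence $(f_{m,t-s+a})$ is uniformly bounded, and on $I$ it converges pointwise to $f_{t-s+a}$. Since $q(s)\in\C$ is a scalar, $q(s)f_{m,t-s+a}\to q(s)f_{t-s+a}$ pointwise with the same uniform bound up to $\max_{[a,b]}|q|$. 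Componentwise this shows each $(q(s)f_{t-s+a})_k$ has property (A), hence lies in $B$, so $q(s)f_{t-s+a}\in B_{nA}$, which is assertion (i).

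For assertion (ii), I would set
\[
\phi_m:(a,t)\ni s\mapsto \lambda\bigl(q(s)f_{m,t-s+a}\bigr)=q(s)\,\lambda\bigl(f_{m,t-s+a}\bigr)\in\C.
\]
Continuity of $\phi_m$ comes from continuity of $q$ together with continuity of the curve $s\mapsto f_{m,t-s+a}\in C_{cn}$, which in turn follows from the uniform continuity of $f_m$ on the compact set $[a-h,b]$. The sequence $(\phi_m)$ is uniformly bounded by
\[
|\phi_m(s)|\le \max_{a\le v\le b}|q(v)|\cdot|\lambda|_{C_{cn}^{\ast}}\cdot\sup_{j,w}|f_j(w)|.
\]
Applying Corollary 3.3 (i) to the pointwise convergence $q(s)f_{m,t-s+a}\to q(s)f_{t-s+a}$ in $B_{nA}$ (the sequence being uniformly bounded in sup-norm) gives
\[
\phi_m(s)=\lambda_e\bigl(q(s)f_{m,t-s+a}\bigr)\longrightarrow \lambda_e\bigl(q(s)f_{t-s+a}\bigr)
\]
for each $s$, where the first equality uses Proposition 3.4 (iii) since $q(s)f_{m,t-s+a}\in C_{cn}$. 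Hence $s\mapsto\lambda_e(q(s)f_{t-s+a})$ is a pointwise limit of a uniformly bounded sequence of continuous functions, i.e., it has property (A).

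Assertion (iii) is then automatic: as noted in Section 1, any $\C$-valued function with property (A) on an interval is a bounded Borel function, and since $(a,t)$ is bounded this bounded Borel function is Lebesgue-integrable.

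The only step requiring care is the continuity of $s\mapsto f_{m,t-s+a}\in C_{cn}$, which depends on the uniform continuity of each continuous approximant $f_m$ on the compact interval $[a-h,b]$; once this is in hand, everything else is a formal application of linearity of $\lambda_e$, Corollary 3.3, and the definition of property (A), so no new analytic idea beyond Section 3 is needed.
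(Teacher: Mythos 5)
Your proof is correct and follows essentially the same route as the paper's, which simply cites Proposition 3.5 (ii)--(iii) and Corollary 3.3 (i) for the segment/transfer arguments that you spell out by hand with the approximating sequence $(f_m)$. The only cosmetic slip is attributing the consistency $\lambda_e\phi=\lambda\phi$ on $C_{cn}$ to Proposition 3.4 (iii) (which treats $\Lambda_E$); that identity for $\lambda_e$ is established in the paragraph just before Corollary 3.3.
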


\begin{proof}
Let $a<t\le b$. As $f$ has property (A), Proposition 3.5 (ii) says that all segments $f_s$, $a\le s\le t$, belong to $B_{nA}$, and Proposition 3.5 (iii)
shows that the map  $(a,t)\ni s\mapsto\lambda_ef_s\in\C$ has property (A) as well. It follows that also the map
$(a,t)\ni s\mapsto\lambda_ef_{t-s+a}\in\C$ has property (A). Using multiplication by the bounded
continuous function $q$ and linearity one easily obtains the assertion.
\end{proof}

\begin{prop}
Let $\lambda\in C_{cn}^{\ast}$ and $a<t\le b$. Then
$$
\lambda\,g_t=\int_{(a,t)}\lambda_e(q(s)f_{t-s+a})dm(s).
$$
\end{prop}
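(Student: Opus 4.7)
The plan is to unfold $\lambda g_t$ via the Riesz component measures, rewrite the inner convolution-like integral on a single fixed interval independent of $u$, and then commute the orders of integration by Fubini to produce $\lambda_e(q(s)f_{t-s+a})$.

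First I would write $\lambda g_t = \sum_{k=1}^n \int_{[-h,0]} g_k(t+u)\,d\mu_k(u)$, where $\mu_k$ is the complex Borel measure associated to $\lambda\circ E_k$ by Theorem~2.2. Next I would show that for \emph{every} $u\in[-h,0]$,
\[
g_k(t+u) \;=\; \int_{(a,t)} q(s)\,f_k(t+u-s+a)\,dm(s),
\]
replacing the original variable upper limit $t+u$ by the fixed upper limit $t$. When $t+u>a$ this is justified because, for $s\in[t+u,t)$, the argument $t+u-s+a$ lies in $[a-h,a]$; since $f_k$ vanishes on $[a-h,a)$ by hypothesis, only the single point $s=t+u$ could contribute a nonzero value of $f_k$, and that contributes nothing to a Lebesgue integral in $s$. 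When $t+u\le a$, the same hypothesis forces the integrand to vanish on $(a,t)$ except possibly at the isolated point $s=t+u$, so the integral equals $0=g_k(t+u)$.

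Having placed the $s$-integration on the same interval for all $u$, I would apply Fubini's theorem on $(a,t)\times[-h,0]$ to the integrand $(s,u)\mapsto q(s)\,f_k(t+u-s+a)$ against the product of Lebesgue measure and $|\mu_k|$ (writing $d\mu_k = h_k\,d|\mu_k|$ as in Section~2). Joint Borel-measurability comes from property~(A) of $f$: the approximating continuous sequence from $C_{cn}$ produces a uniformly bounded sequence of jointly continuous functions $(s,u)\mapsto f_{mk}(t+u-s+a)$ that converges pointwise, hence the limit is Borel. Absolute integrability is immediate: $q$ is bounded on $[a,b]$, $f$ is bounded (Proposition~3.5~(ii) applied to $f$, or directly from property~(A)), and both factor measures are finite on the relevant rectangle. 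After swapping I would obtain
\[
\sum_k \int_{(a,t)} \int_{[-h,0]} q(s)\,f_k(t-s+a+u)\,d\mu_k(u)\,dm(s)
\;=\; \int_{(a,t)} q(s)\sum_k \int_{[-h,0]} (f_{t-s+a})_k\,d\mu_k\,dm(s),
\]
and the inner sum is exactly $\lambda_e f_{t-s+a}$ by the definition of the extension in Section~3; by linearity of $\lambda_e$, pulling $q(s)$ inside yields $\lambda_e(q(s)f_{t-s+a})$, giving the claimed identity.

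The main obstacle I expect is the Fubini step: both the joint measurability of the integrand (which must be argued from the Baire-class-one approximation rather than from continuity) and the careful handling of complex measures via their polar decomposition $d\mu_k=h_k d|\mu_k|$ need attention, though both are routine given the machinery already developed. The cosmetic issue of the endpoint $s=t+u$ in the reduction to a single $s$-interval is minor but worth spelling out, since it is what makes the Fubini setup clean.
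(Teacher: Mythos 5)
Your proposal is correct, but it takes a genuinely different route from the paper's. The paper proves Proposition 4.5 by the approximation scheme that runs through the whole Section: it takes a uniformly bounded sequence of continuous maps $f_k\to f$ pointwise, uses Corollary~3.3(i) together with the Dominated Convergence Theorem to pass to the limit in $\int_{(a,t)}\lambda(q(s)f_{k,t-s+a})\,dm(s)$, then for each fixed $k$ commutes the Riemann integral $\int_a^t q(s)f_{k,t-s+a}\,ds$ with the continuous linear map $\lambda$, and finally shows by a pointwise argument (distinguishing cases $t+u\le a$ and $t+u>a$) that $\int_a^t q(s)f_{k,t-s+a}\,ds\to g_t$ in $C_{cn}$ pointwise on $I$, so that Corollary~3.3(i) can be applied once more. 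You instead unfold $\lambda g_t$ directly into $\sum_k\int g_k(t+u)\,d\mu_k(u)$ via the Riesz measures, normalize the inner integration domain to the fixed interval $(a,t)$ (your endpoint discussion at $s=t+u$ and the case $t+u\le a$ are correct, since $f$ vanishes on $[a-h,a)$ and a single point is Lebesgue-null), and then swap orders by Fubini using the polar decomposition $d\mu_k=h_k\,d|\mu_k|$, with joint Borel-measurability of $(s,u)\mapsto f_k(t+u-s+a)$ argued from property (A). Both arguments are sound. Your Fubini route is shorter once the product-measurability point is handled, and it is, in fact, exactly the style of argument the paper later deploys in the proof of Proposition~6.2. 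The paper's route here is more elementary, requiring only Riemann integration, linearity of $\lambda$ on $C_{cn}$, Corollary~3.3, and Dominated Convergence, and so postpones the heavier Fubini machinery until Section~6; it also avoids having to argue joint measurability on a product space at this stage. One cosmetic caution: you reuse the letter $m$ both for Lebesgue measure and as the index of the approximating sequence, and $k$ both for components and (in the paper's notation) for the approximation index --- worth disambiguating in a written version.
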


\begin{proof}
1. Choose a sequence of continuous maps
$f_k:[a-h,b]\to\C^n$, $k\in\N$, which is uniformly bounded by
some $c\ge0$ and converges pointwise to $f$. Then for every $t\in(a,b]$ the bounded sequence of continuous maps $qf_k(t-\cdot+a:[a,t]\to\C^n$,
$k\in\N$, converges pointwise to $qf(t-\cdot+a):[a,t]\to\C^n$ as $k\to\infty$. Corollary 3.3 (i) yields that for $k\to\infty$ the bounded sequence of the continuous functions
$$
[a,t]\ni s\mapsto\lambda_ef_{k,t-s+a}\in\C,\quad k\in\N,
$$
converges pointwise to
$$
[a,t]\ni s\mapsto\lambda_ef_{t-s+a}\in\C.
$$
As $q$ is continuous and as $\lambda_e$  is linear we infer
that for $k\to\infty$  the continuous functions
$$
[a,t]\ni s\mapsto\lambda(q(s)f_{k,t-s+a})\in\C,\quad k\in\N,
$$
converge pointwise to
$$
[a,t]\ni s\mapsto\lambda_e(q(s)f_{t-s+a})\in\C.
$$
Theorem 2.1 (the  Lebesgue Dominated Convergence Theorem) yields
$$
\int_{(a,t)}\lambda_e(q(s)f_{t-s+a})dm(s)=\lim_{k\to\infty}\int_{(a,t)}\lambda(q(s)f_{k,t-s+a})dm(s).
$$

2. For every $k\in\N$ we have
$$
\int_{(a,t)}\lambda(q(s)f_{k,t-s+a})dm(s) = \int_a^t\lambda(q(s)f_{k,t-s+a})ds\quad\text{(Riemann-integral)}
$$
$$
=\lambda\left( \int_a^t q(s)f_{k,t-s+a}ds\right),
$$
and for all $u\in I$, with the linear continuous evaluation map
$$
ev_u:C_{cn}\ni\phi\mapsto\phi(u)\in\C^n,
$$
we have
\begin{eqnarray*}
 \left(\int_a^tq(s)f_{k,t-s+a}ds\right)(u) & = & ev_u \left(\int_a^tq(s)f_{k,t-s+a}ds\right)\\
= \int_a^tev_u(qf_{k,t-s+a})ds & = &   \int_a^tq(s)f_k(t-s+a+u)ds.
\end{eqnarray*}
In case $a<t+u$ the last integral is
$$
  \int_a^{t+u}q(s)f_k(t-s+a+u)ds+  \int_{t+u}^tq(s)f_k(t-s+a+u)ds.
$$

3. For $a<s<t$ and $u\in I$ with $t+u\le a$ we have $t-s+a+u\le t+u \le a$, and infer from pointwise convergence and from $f(v)=0$ for $a-h\le v\le a$ that
$$
q(s)f_k(t-s+a+u)\to0\quad\text{as}\quad k\to\infty.
$$
So in this case Theorem 2.1 yields
$$
\int_a^t q(s)f_k(t-s+a+u)ds\to0\quad\text{as}\quad k\to\infty.
$$
Next, for $u\in I$ with $a<t+u\,\, (\le t)$ we obtain
$$
\int_{t+u}^t q(s)f_k(t-s+a+u)ds\to0\quad\text{as}\quad k\to\infty
$$
since for $t+u<s<t$ we have $t-s+a+u\le a$, and $f_k(v)\to f(v)=0$ on $[a-h,a]$ as $k\to\infty$.

\smallskip

Also, for $u\in I$ with $a\le t+u$ we have
$$
\int_a^{t+u}q(s)f_k(t-s+a+u)ds\to\int_a^{t+u}q(s)f(t-s+a+u)ds=g(t+u)=g_t(u),
$$
due to Theorem 2.1.

\smallskip

Together,
$$
\int_a^tq(s)f_{k,t-s+a}ds\to   g_t\quad\text{pointwise on}\quad I \quad\text{as}\quad k\to\infty.
$$

4. Using Corollary 3.3 (i) we infer
$$
\int_a^t\lambda(qf_{k,t-s+a})ds=\lambda\left(\int_a^tq(s)f_{k,t-s+a})ds\right)\to\lambda g_t\quad\text{as}\quad k\to\infty.
$$
We combine this with the last equation in part 1 and arrive at
$$
\lambda g_t=\int_{(a,t)}\lambda_e(q(s)f_{t-s+a})dm(s).
$$
\end{proof}

\begin{cor}
Let $\Lambda\in L_c(C_{cn},Y)$ and $a<t\le b$. Then the weak-star integral of the map
$$
(a,t)\ni s\mapsto \Lambda_E(q(s)f_{t-s+a})\in Y^{\ast\ast}
$$
exists, with
$$
\iota(\Lambda g_t)=\int_{(a,t)}\Lambda_E(q(s)f_{t-s+a})dm(s).
$$
\end{cor}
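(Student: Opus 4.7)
The plan is to reduce the statement to the already-proven scalar case (Proposition 4.5) by testing against an arbitrary $y^{\ast}\in Y^{\ast}$. The key identity, built into the definition of $\Lambda_E$ in Proposition 3.4, is
\[
(\Lambda_E(q(s)f_{t-s+a}))y^{\ast}=(y^{\ast}\circ\Lambda)_e(q(s)f_{t-s+a}),
\]
valid for every $y^{\ast}\in Y^{\ast}$ and every $s\in(a,t)$, because $y^{\ast}\circ\Lambda\in C_{cn}^{\ast}$.

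First I would verify existence of the weak-star integral. Fix $y^{\ast}\in Y^{\ast}$. Applying Proposition 4.4 to the scalar functional $\lambda=y^{\ast}\circ\Lambda$ shows that the map
\[
(a,t)\ni s\mapsto (y^{\ast}\circ\Lambda)_e(q(s)f_{t-s+a})\in\C
\]
has property (A), so it is a bounded Borel function on the bounded interval $(a,t)$ and is therefore Lebesgue-integrable. By the identity displayed above this is the same as $s\mapsto(\Lambda_E(q(s)f_{t-s+a}))y^{\ast}$, so the integrability condition in the definition of the weak-star integral is met for every $y^{\ast}$, and $\int_{(a,t)}\Lambda_E(q(s)f_{t-s+a})\,dm(s)$ exists as an element of $\C^{Y^{\ast}}$.

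Next I would identify this element with $\iota(\Lambda g_t)$. Applying Proposition 4.5 to $\lambda=y^{\ast}\circ\Lambda$ yields
\[
(y^{\ast}\circ\Lambda)g_t=\int_{(a,t)}(y^{\ast}\circ\Lambda)_e(q(s)f_{t-s+a})\,dm(s).
\]
The left-hand side equals $y^{\ast}(\Lambda g_t)=\iota(\Lambda g_t)y^{\ast}$, while, by the identity above together with the definition of the weak-star integral, the right-hand side equals $\bigl(\int_{(a,t)}\Lambda_E(q(s)f_{t-s+a})\,dm(s)\bigr)y^{\ast}$. Since this holds for every $y^{\ast}\in Y^{\ast}$, the two linear functionals on $Y^{\ast}$ coincide, which gives both the claimed equality and, as a byproduct, membership of the weak-star integral in $\iota(Y)\subset Y^{\ast\ast}$.

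There is no genuine obstacle here; the work is purely bookkeeping of definitions (most delicately the definition of $\Lambda_E$ via $(\Lambda_E\phi)y^{\ast}=(y^{\ast}\circ\Lambda)_e\phi$), with all analytic content already supplied by Propositions 4.4 and 4.5. The only point to watch is to apply Proposition 4.4 before invoking Proposition 4.5, so that the existence of the weak-star integral is established before one asserts the integral formula.
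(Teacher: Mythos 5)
Your proposal is correct and follows essentially the same route as the paper: test against an arbitrary $y^{\ast}\in Y^{\ast}$, use the defining identity $(\Lambda_E\phi)y^{\ast}=(y^{\ast}\circ\Lambda)_e\phi$ to reduce to the scalar functional $y^{\ast}\circ\Lambda$, invoke Proposition 4.4 for integrability (hence existence of the weak-star integral) and Proposition 4.5 for the integral formula, and finally identify $(y^{\ast}\circ\Lambda)g_t=y^{\ast}(\Lambda g_t)=(\iota(\Lambda g_t))y^{\ast}$.
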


\begin{proof}
Let $y^{\ast}\in Y^{\ast}$ be given. Proposition 4.4
shows that the map $(a,t)\ni s\mapsto(y^{\ast}\circ\Lambda)_e(q(s)f_{t-s+a})\in\C$ is Lebesgue-integrable. We have
$$
(y^{\ast}\circ\Lambda)_e(q(s)f_{t-s+a})=(\Lambda_E(q(s)f_{t-s+a}))y^{\ast}\quad\text{for all}\quad s\in(a,t).
$$
It follows that the weak-star integral of the map
$$
(a,t)\ni s\mapsto \Lambda_E(q(s)f_{t-s+a})\in Y^{\ast\ast}
$$
exists, and for every $y^{\ast}\in Y^{\ast}$ we have
\begin{eqnarray*}
\left(\int_{(a,t)} \Lambda_E(q(s)f_{t-s+a})dm(s)\right)y^{\ast} & = & \int_{(a,t)} \Lambda_E(q(s)f_{t-s+a}))y^{\ast}dm(s)\\
=\int_{(a,t)}(y^{\ast}\circ\Lambda)_e(q(s)f_{t-s+a})dm(s) & = &  (y^{\ast}\circ\Lambda)g_t\quad\text{(see Proposition 4.5)}\\
=y^{\ast}(\Lambda g_t) & = & (\iota(\Lambda g_t))y^{\ast}.
\end{eqnarray*}
\end{proof}

At the end of this section we formulate results on commutativity which look a bit more general and simpler than the preceding ones. Proofs use the same arguments as above.

Let reals $a<b$ and a map $x:[a-h,b]\to\C^n$ with property (A) be given, as well as $\lambda\in C_{cn}^{\ast}$ and $\Lambda\in L_c(C_{cn},Y)$. Then we have the curves
$$
X:[a,b]\ni t\mapsto x_t\in B_{nA},
$$
$$
\lambda_e\circ X:[a,b]\to\C\quad\text{and}\quad\Lambda_E\circ X:[a,b]\to Y^{\ast\ast}.
$$
We define
$$
z:[a-h,b]\to\C^n
$$
by
$$
z(t)=\int_{(a-h,t)}xdm.
$$

\begin{prop}
$z$ is continuous.
\end{prop}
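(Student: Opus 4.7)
The plan is to show that $z$ is in fact Lipschitz on $[a-h,b]$, with Lipschitz constant given by the uniform bound on $x$. This will be immediate from the componentwise estimate $\bigl|\int_E u\,dm\bigr|\le\int_E|u|\,dm$ for Lebesgue integrals together with the additivity of the integral over adjacent intervals.

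First I would unpack what property (A) gives. By hypothesis, $x$ is the pointwise limit on $[a-h,b]$ of a uniformly bounded sequence of continuous maps $x_m:[a-h,b]\to\C^n$; let $M=\sup_{m,t}|x_m(t)|<\infty$. Since pointwise limits of measurable functions are measurable, each component $x_k:[a-h,b]\to\C$ is a Borel function, and pointwise convergence forces $|x(t)|\le M$ on $[a-h,b]$. In particular each $x_k$ is a bounded Borel function, hence lies in $L^1([a-h,b])$, and the Lebesgue integrals defining $z(t)$ exist for all $t\in[a-h,b]$. Note $z(a-h)=0$ since the integral is over the empty set.

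Next I would verify additivity and estimate the increment. For $a-h\le s<t\le b$, Proposition 4.1 (or directly the additivity of the Lebesgue integral under the disjoint decomposition $(a-h,t)=(a-h,s)\cup\{s\}\cup(s,t)$, with $\{s\}$ of measure zero) gives
$$
z(t)-z(s)=\int_{(s,t)}x\,dm.
$$
Applying the componentwise estimate recalled in Section 2 yields
$$
|z(t)-z(s)|\le\int_{(s,t)}|x|\,dm\le M\,m\bigl((s,t)\bigr)=M(t-s).
$$
Hence $z$ is Lipschitz continuous on $[a-h,b]$, and in particular continuous.

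There is no real obstacle here; the only point requiring minimal care is that property (A) not only supplies measurability of the approximating sequence's limit but also an explicit uniform bound $M$ on $x$ itself, which is what makes the $L^1$-integrability and the Lipschitz estimate immediate. No appeal to any deeper convergence theorem is needed for this statement.
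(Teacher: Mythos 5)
Your proof is correct and matches the paper's intent: the paper gives no explicit proof of this proposition, stating only that ``Proofs use the same arguments as above,'' and the argument ``above'' (Proposition~4.3) is exactly the boundedness-plus-additivity estimate you use, simplified here because there is no convolution factor $q$ to deal with. Your observation that the result is in fact Lipschitz with constant $M=\sup_{m,t}|x_m(t)|$ is a clean way to package the estimate.
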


\begin{prop}
$\lambda_e\circ X:[a,b]\to\C$ is Borel-measurable and bounded, hence Lebesgue-integrable.
\end{prop}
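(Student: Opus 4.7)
The plan is to invoke Proposition 3.5 directly, and then use the observation from the introduction that property (A) implies bounded Borel measurability (since the limit of a pointwise convergent sequence of measurable functions is measurable, and a uniformly bounded sequence yields a bounded limit).

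More precisely, since $x:[a-h,b]\to\C^n$ has property (A), I apply Proposition 3.5(iii) with $J=[a-h,b]$ and $J_0=[a,b]$ (which satisfies $[t-h,t]\subset J$ for every $t\in J_0$, because $a-h\le t-h$ and $t\le b$). This immediately tells me that the map
$$
\lambda_e\circ X:[a,b]\ni t\mapsto \lambda_e x_t\in\C
$$
itself has property (A). That is, there exists a uniformly bounded sequence of continuous functions $w_m:[a,b]\to\C$ converging pointwise to $\lambda_e\circ X$.

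From this I read off both required conclusions. Boundedness of $\lambda_e\circ X$ is immediate: the uniform bound of the approximating sequence $w_m$ passes to the pointwise limit. Borel-measurability follows because each continuous $w_m$ is Borel-measurable, and pointwise limits of sequences of Borel-measurable complex-valued functions are Borel-measurable (recorded in Section 2 under ``Limits of pointwise convergent sequences of measurable functions are measurable''). A bounded Borel-measurable function on the bounded interval $[a,b]$ is Lebesgue-integrable, since $m([a,b])=b-a<\infty$ and therefore
$$
\int_{[a,b]}|\lambda_e\circ X|\,dm\le (b-a)\sup_{t\in[a,b]}|\lambda_e x_t|<\infty.
$$

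There is no genuine obstacle here; the statement is essentially a repackaging of Proposition 3.5(iii) combined with the remark in the introduction that, for $X=\C$, property (A) coincides with a subclass of the bounded Borel functions. The only points to verify are the containment of segment intervals $[t-h,t]$ inside $[a-h,b]$ required for Proposition 3.5 to apply, and the routine fact that bounded Borel functions on a compact interval are Lebesgue-integrable.
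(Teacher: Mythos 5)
Your proof is correct and follows exactly the route the paper intends: the paper states that Propositions 4.7--4.11 ``use the same arguments as above,'' and the argument for Proposition 4.4 is precisely the application of Proposition 3.5(iii) to obtain property (A) for $\lambda_e$ applied along a curve of segments, followed by the observation that property (A) functions into $\C$ are bounded Borel functions, hence Lebesgue-integrable on a bounded interval. Your verification of the hypothesis $[t-h,t]\subset J$ for $t\in J_0=[a,b]$ and the closing integrability estimate are both in order.
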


\begin{prop}
$\Lambda_E\circ X$ is bounded, and for each $y^{\ast}\in Y^{\ast}$ the function $[a,b]\ni t\mapsto ((\Lambda_E\circ X)(t))y^{\ast}\in\C$ has property (A), and is Lebesgue-integrable
\end{prop}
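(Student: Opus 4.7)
The plan is to reduce all three assertions to Propositions 3.4 and 3.5 by evaluating pointwise against arbitrary $y^{\ast}\in Y^{\ast}$. First, since $x$ has property (A), it is the pointwise limit of a uniformly bounded sequence of continuous maps on $[a-h,b]$, so $x$ itself is bounded, say by some constant $c$. Consequently every segment $x_t$ (for $t\in[a,b]$) belongs to $B_{nA}$ (by Proposition 3.5(ii) applied with $J=[a-h,b]$, $J_0=[a,b]$, $v=x$) and satisfies $|x_t|\le c$. Proposition 3.4(ii) then yields
$$
|(\Lambda_E\circ X)(t)|=|\Lambda_Ex_t|\le|\Lambda|_{L_c(C_{cn},Y)}|x_t|\le c\,|\Lambda|_{L_c(C_{cn},Y)}
$$
for all $t\in[a,b]$, which establishes boundedness.

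For the second assertion, fix $y^{\ast}\in Y^{\ast}$. By the very definition of $\Lambda_E$ (see the construction preceding Proposition 3.4),
$$
((\Lambda_E\circ X)(t))y^{\ast}=(\Lambda_Ex_t)y^{\ast}=(y^{\ast}\circ\Lambda)_ex_t
$$
for every $t\in[a,b]$, where $y^{\ast}\circ\Lambda\in C_{cn}^{\ast}$. Now apply Proposition 3.5(iii) with $\lambda=y^{\ast}\circ\Lambda$ and $v=x$ (still with $J=[a-h,b]$, $J_0=[a,b]$): this is exactly the setup of that statement, and it yields that the map $[a,b]\ni t\mapsto(y^{\ast}\circ\Lambda)_ex_t\in\C$ has property (A).

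Finally, any function with property (A) is a bounded Borel function (it is a pointwise limit of continuous, hence Borel, functions, and the uniform bound on the approximating sequence is inherited by the limit). Since $[a,b]$ has finite Lebesgue measure, any bounded Borel function on $[a,b]$ is Lebesgue-integrable, which gives the third assertion. No step presents a real obstacle; the only thing worth checking is that the interval hypothesis $[t-h,t]\subset J$ for all $t\in J_0$ is satisfied in our setting, which holds since $J_0=[a,b]$ and $J=[a-h,b]$.
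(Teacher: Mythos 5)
Your proof is correct and follows exactly the route the paper intends: the paper omits the proof of this proposition, stating only that ``proofs use the same arguments as above,'' and the relevant ``above'' argument is that of Proposition 4.2, which is precisely what you reproduce (boundedness via Proposition 3.4~(ii), property~(A) of $t\mapsto(y^{\ast}\circ\Lambda)_e x_t$ via Proposition 3.5~(iii), and Lebesgue-integrability because a property-(A) function is a bounded Borel function on a set of finite measure).
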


The preceding proposition says that the weak-star integral $\int\Lambda_E\circ X dm:Y^{\ast}\to\C$ exists.

\begin{prop}
Suppose in addition that $x(t)=0$ for $a-h\le t\le a$. Then for each $t\in(a,b]$
$$
\lambda z_t=\int_{(a,t)}\lambda_e\circ X dm.
$$
\end{prop}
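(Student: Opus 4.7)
The plan is to mirror Proposition 4.5 (with $q\equiv1$ and no time reversal): approximate $x$ uniformly boundedly by continuous maps that also vanish on $[a-h,a]$, perform the computation for the approximants via commutativity of $\lambda$ with the $C_{cn}$-valued Riemann integral, and pass to the limit using Corollary 3.3 and Theorem 2.1.

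Since $x$ has property (A), I start with any uniformly bounded sequence $(\tilde x^{(k)})_{k\in\N}$ of continuous maps $[a-h,b]\to\C^n$ converging pointwise to $x$, then multiply by a continuous cutoff $\phi_k:[a-h,b]\to[0,1]$ with $\phi_k\equiv0$ on $[a-h,a]$, $\phi_k\equiv1$ on $[a+1/k,b]$, and linear interpolation in between: the maps $x^{(k)}=\phi_k\tilde x^{(k)}$ are continuous, uniformly bounded, vanish on $[a-h,a]$, and still converge pointwise to $x$ on $[a-h,b]$ (using $x\equiv 0$ on $[a-h,a]$). Setting $z^{(k)}(r)=\int_{a-h}^r x^{(k)}(s)\,ds$ (Riemann), a direct computation yields, for $t\in(a,b]$ and $u\in I$,
$$
z^{(k)}_t(u)=\int_{a+u}^{t+u}x^{(k)}(r)\,dr=\int_a^t x^{(k)}(s+u)\,ds,
$$
where the first equality uses the vanishing of $x^{(k)}$ on $[a-h,a]$ (to handle the case $t+u\le a$ and to drop the piece on $[a+u,a]$ in the case $t+u>a$). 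This identifies $z^{(k)}_t=\int_a^t x^{(k)}_s\,ds$ as a Riemann integral in $C_{cn}$, so commutativity of $\lambda$ with Riemann integration, the identity $\lambda_e x^{(k)}_s=\lambda x^{(k)}_s$ for continuous $x^{(k)}$, and the coincidence of Riemann and Lebesgue integrals on continuous integrands give
$$
\lambda z^{(k)}_t=\int_a^t\lambda x^{(k)}_s\,ds=\int_{(a,t)}\lambda_e x^{(k)}_s\,dm(s).
$$

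To conclude I let $k\to\infty$. Theorem 2.1 yields $z^{(k)}(r)\to z(r)$ for every $r$, hence $z^{(k)}_t\to z_t$ pointwise on $I$, uniformly bounded by $h\sup_k|x^{(k)}|$; since $z,z^{(k)}$ are continuous (Proposition 4.7 for $z$), one has $z_t,z^{(k)}_t\in C_{cn}$, and Corollary 3.3 (i) gives $\lambda z^{(k)}_t=\lambda_e z^{(k)}_t\to\lambda_e z_t=\lambda z_t$. On the right, for each fixed $s\in(a,t)$ the maps $x^{(k)}_s\in C_{cn}\subset B_{nA}$ converge pointwise to $x_s$ uniformly boundedly, so Corollary 3.3 (i) yields $\lambda_e x^{(k)}_s\to\lambda_e x_s$; Corollary 3.3 (ii) supplies the uniform bound $|\lambda_e x^{(k)}_s|\le|\lambda|_{C_{cn}^{\ast}}\sup_k|x^{(k)}|$, and Theorem 2.1 delivers convergence of the integrals to $\int_{(a,t)}\lambda_e x_s\,dm(s)=\int_{(a,t)}\lambda_e\circ X\,dm$. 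The main obstacle is precisely the insertion of the cutoffs $\phi_k$: without transferring the vanishing of $x$ on $[a-h,a]$ to the approximants, the key identity $z^{(k)}_t=\int_a^t x^{(k)}_s\,ds$ in $C_{cn}$ fails at $u\in I$ with $t+u\le a$, where the right side equals $\int_{a+u}^{t+u}x^{(k)}$ and need not vanish.
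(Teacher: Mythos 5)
Your proof is correct, and it follows the same high-level strategy the paper intends for this proposition: approximate $x$ by continuous maps, commute $\lambda$ with the $C_{cn}$-valued Riemann integral at the approximant level, then pass to the limit via Corollary 3.3 and Theorem 2.1. The paper does not spell out a proof here; it only says that "Proofs use the same arguments as above," meaning the argument of Propositions 4.3--4.6, in particular Proposition 4.5. The one genuine departure is your cutoff: you force the continuous approximants $x^{(k)}=\phi_k\tilde x^{(k)}$ to vanish exactly on $[a-h,a]$, which makes the identity $z^{(k)}_t=\int_a^t x^{(k)}_s\,ds$ hold exactly for every $k$ and every $u\in I$. In the paper's proof of Proposition 4.5 the approximants $f_k$ are \emph{not} made to vanish on $[a-h,a]$; instead, the corresponding discrepancy (the integrals over the part of the interval pushed into $[a-h,a]$) is shown to tend to $0$ by additional applications of the Dominated Convergence Theorem, using only $f(v)=0$ on $[a-h,a]$ together with $f_k\to f$ pointwise. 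Your cutoff buys a clean pointwise identity at the approximation level and hence a shorter limit argument with no case split; the paper's route avoids modifying the approximating sequence at the cost of the extra limit computations. Both are sound, and your diagnosis that without the cutoff the exact identity $z^{(k)}_t=\int_a^t x^{(k)}_s\,ds$ fails at $u$ with $t+u\le a$ is accurate.
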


\begin{prop}
For all $t\in(a,b]$,
$$
\iota(\Lambda z_t)=\int_{(a,t)}\Lambda_E\circ X dm.
$$
\end{prop}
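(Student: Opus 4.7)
The plan is to reduce the vector-valued statement to its scalar counterpart (Proposition 4.10) by testing against arbitrary functionals $y^{\ast}\in Y^{\ast}$, in complete analogy with how Corollary 4.6 is obtained from Proposition 4.5.

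First I would invoke Proposition 4.9 to conclude that the weak-star integral $\int_{(a,t)}\Lambda_E\circ X\,dm\in Y^{\ast\ast}$ is well-defined. Then, since $\iota(\Lambda z_t)\in Y^{\ast\ast}$ as well, the equality to be established is an equality of linear functionals on $Y^{\ast}$, so it suffices to evaluate both sides at an arbitrary $y^{\ast}\in Y^{\ast}$ and show that the resulting scalars agree.

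For the right-hand side, by the very definition of the weak-star integral and of $\Lambda_E$ (Proposition 3.4, where $(\Lambda_E\phi)y^{\ast}=(y^{\ast}\circ\Lambda)_e\phi$), we have
$$
\left(\int_{(a,t)}\Lambda_E\circ X\,dm\right)y^{\ast}=\int_{(a,t)}(\Lambda_E(x_s))y^{\ast}\,dm(s)=\int_{(a,t)}(y^{\ast}\circ\Lambda)_e(x_s)\,dm(s).
$$
Now $y^{\ast}\circ\Lambda\in C_{cn}^{\ast}$, so I would apply Proposition 4.10 to the scalar functional $\lambda:=y^{\ast}\circ\Lambda$ (invoking also the hypothesis $x(t)=0$ on $[a-h,a]$ inherited from that proposition), yielding
$$
\int_{(a,t)}(y^{\ast}\circ\Lambda)_e(x_s)\,dm(s)=(y^{\ast}\circ\Lambda)(z_t)=y^{\ast}(\Lambda z_t)=(\iota(\Lambda z_t))y^{\ast}.
$$
Since $y^{\ast}\in Y^{\ast}$ was arbitrary, the two functionals coincide, giving the claimed identity.

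I do not expect a serious obstacle here: all the heavy lifting (dominated convergence, approximation of $x$ by continuous maps, commutativity of the Riemann integral with the continuous linear $\lambda$, and the vanishing of boundary contributions) was already absorbed into the proof of Proposition 4.10. The only subtlety is bookkeeping: one must make sure the weak-star integral's defining property, the duality relation $(\Lambda_E\phi)y^{\ast}=(y^{\ast}\circ\Lambda)_e\phi$, and the isometric embedding $\iota$ are applied in the correct order — exactly as in Corollary 4.6.
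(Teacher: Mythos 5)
Your proposal is correct and is precisely the argument the paper intends: the paper states that Propositions 4.7--4.11 are proven ``by the same arguments as above,'' meaning the reduction to the scalar case by testing against $y^{\ast}\in Y^{\ast}$ exactly as in the passage from Proposition 4.5 to Corollary 4.6. You also correctly flag the implicit hypothesis $x(t)=0$ on $[a-h,a]$, which Proposition 4.11 inherits from Proposition 4.10; nothing is missing.
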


The preceding equation implies
$$
\int_{(a,t)}\Lambda_E\circ X dm\in Y^{\ast\ast}
$$
for $a<t\le b$. For such $t$ and for $y^{\ast}\in Y^{\ast}$ we also get the estimate
$$
\left|\left(\int_{(a,t)}\Lambda_E\circ X dm\right)y^{\ast}\right|\le(t-a)|\Lambda|_{L_c(C_{cn},Y)}|y^{\ast}|\sup_{a-h<s<t}|x(s)|
$$
(which once more shows that $\int_{(a,t)}\Lambda_E\circ X dm\in Y^{\ast\ast}$).

\section{Linear neutral equations and discontinuous initial data}

Let $L\in L_c(C_{cn},\C^n)$ and $R\in L_c(C_{cn},\C^n)$ be given
and assume the hypothesis $(\Delta)$ from Section 1 is satisfied. Consider the initial value problem
\begin{eqnarray}
\frac{d}{dt}(v-L\circ V)(t) & = & Rv_t\quad\text{for}\quad t\ge0,\\
v_0 & = & \phi\in C_{cn},
\end{eqnarray}
with $V(t)=v_t$ as in Section 1. Solutions of the IVP (5.1)-(5.2) are understood as solutions of Eq. (1.2) which satisfy Eq. (5.2). The next result is well-known, see \cite{HVL,HM}. We include a short proof, which exploits the hypothesis $(\Delta)$, in order to keep this account self-contained.

\begin{prop}
Each $\phi\in C_{cn}$ uniquely determines a solution $v=v^{\phi}$ of the
IVP (5.1)-(5.2). Each linear map
$$
S(t):C_{cn}\ni\phi\mapsto v^{\phi}_t\in C_{cn},\quad t\ge0,
$$
is continuous.
\end{prop}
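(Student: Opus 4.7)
The plan is to convert the IVP into an integrated Volterra-type equation and to use $(\Delta)$ to decouple the neutral term $Lv_t$ from the unknown portion of $v$ on intervals of length $\Delta$. Integrating (5.1) from $0$ to $t$ and using $v_0=\phi$ gives the equivalent form
\[
v(t)=Lv_t+\phi(0)-L\phi+\int_0^t Rv_s\,ds\quad (t\ge 0),\qquad v|_{[-h,0]}=\phi.
\]
Conversely, any continuous $v:[-h,\infty)\to\C^n$ obeying this identity is a solution of the IVP: the map $s\mapsto Rv_s$ is continuous, so the right-hand side is $Lv_t$ plus a constant plus a Riemann integral of a continuous function, whence $v-L\circ V$ is differentiable with the prescribed derivative.

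The crucial consequence of $(\Delta)$ is that $L\psi$ depends only on $\psi|_{[-h,-\Delta]}$. For $t\in[0,\Delta]$ and $s\in[-h,-\Delta]$ one has $t+s\le 0$, so $v_t(s)=\phi(t+s)$ is determined by $\phi$ alone; thus if $v^{(1)},v^{(2)}$ are any two continuous extensions of $\phi$ to $[-h,\tau]$ with $\tau\in(0,\Delta]$, then $L(v^{(1)})_t=L(v^{(2)})_t$ for all $t\in[0,\tau]$. Therefore the operator
\[
T(v)(t)=Lv_t+\phi(0)-L\phi+\int_0^t Rv_s\,ds\quad (0\le t\le\tau),\quad T(v)|_{[-h,0]}=\phi,
\]
mapping the complete metric space $X_\tau=\{u\in C([-h,\tau],\C^n):u|_{[-h,0]}=\phi\}$ (with sup metric) to itself, satisfies
\[
\sup_{0\le t\le\tau}|T(v^{(1)})(t)-T(v^{(2)})(t)|\le |R|\,\tau\sup_{0\le t\le\tau}|v^{(1)}(t)-v^{(2)}(t)|,
\]
because the $L$-terms cancel in $T(v^{(1)})-T(v^{(2)})$. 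For $\tau\in(0,\Delta]$ with $|R|\tau<1$, Banach's fixed point theorem produces a unique fixed point, i.e., a unique solution on $[-h,\tau]$; finitely many such steps cover $[-h,\Delta]$.

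The induction from $[-h,k\Delta]$ to $[-h,(k+1)\Delta]$ is identical: for $t\in[k\Delta,(k+1)\Delta]$ and $s\in[-h,-\Delta]$, $t+s\in[k\Delta-h,k\Delta]\subset[-h,k\Delta]$, so $Lv_t$ depends only on previously constructed values, and the contraction estimate is unchanged. Thus a unique solution $v^\phi$ exists on $[-h,\infty)$. Linearity $v^{\alpha\phi+\beta\psi}=\alpha v^\phi+\beta v^\psi$ is immediate from uniqueness and linearity of the equation, so each $S(t)$ is linear. For continuity, set $M(t)=\max(|\phi|,\sup_{0\le r\le t}|v^\phi(r)|)$; the integrated equation together with $|Lv_t|\le|L|\,M(\max(t-\Delta,0))$ (another consequence of $(\Delta)$) yields a Gronwall-type inequality on each step interval, giving $|S(t)\phi|\le C(t)|\phi|$ with $C(t)$ locally bounded in $t$. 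I expect the only delicate point to be the bookkeeping in the iterated Gronwall estimate; the contraction argument itself is routine once $(\Delta)$ is exploited to eliminate the neutral term from the difference of two candidate solutions.
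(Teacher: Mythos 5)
Your proof is correct and follows essentially the same route as the paper: integrate (5.1), observe that $(\Delta)$ makes $Lv_t$ independent of the unknown part of $v$ on any interval of length $\le\Delta$, run a Banach contraction on short intervals (the contraction constant coming only from $R$), and iterate. The only cosmetic difference is that the paper shifts to $w=v-\chi^c$ so the fixed point lives in a genuine Banach space $C_T$ (which also makes the explicit bound on $|w^\chi|_T$, and hence continuity of $S(t)$, drop straight out of the contraction estimate), whereas you work in the closed affine subset $X_\tau$ of $C([-h,\tau],\C^n)$ and obtain the operator bound via a Gronwall-type induction; both are valid and equivalent in content.
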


\begin{proof} 1. (Local solutions) Choose $T\in(0,\Delta)$ with $T|R|_{L_c(C_{cn},\C^n)}<1$. Let $C_T$ denote the Banach
space of continuous mappings $w:[-h,T]\to\C^n$ with $w_0=0$, normed
by $|w|_T=\max_{0\le t\le T}|w(t)|$. For $\chi\in C_{cn}$ we define the continuous extension
$\chi^c:[-h,\infty)\to\C^n$ by $\chi^c_0=\chi$ and
$\chi^c(t)=\chi(0)$ for $t>0$. For a
continuous function $v:[-h,T]\to\C^n$  with $v_0=\chi$ and for
$0\le t\le T$, Eq. (5.1) is equivalent to the equation
$$
v(t)-Lv_t=v(0)-Lv_0+\int_0^tRv_sds,
$$
or, using property ($\Delta$),
$$
v(t)=L\chi^c_t+\chi(0)-L\chi+\int_0^tRv_sds.
$$
For $w=v-\chi^c$ this yields
\begin{equation}
w(t)=L\chi^c_t+\chi(0)-L\chi-\chi^c(t)+\int_0^tRw_sds+\int_0^tR\chi^c_sds
\end{equation}
and $w_0=0$. Define
$$
A:C_{cn}\times C_T\to(\C^n)^{[-h,T]}
$$
by
\begin{eqnarray*}
A(\chi,w)(t) & = & 0\quad\text{for}\quad-h\le t\le 0,\\
A(\chi,w)(t) & = &
L\chi^c_t+\chi(0)-L\chi-\chi^c(t)+\int_0^tRw_sds+\int_0^tR\chi^c_sds\quad\text{for}\quad0<t\le
T.
\end{eqnarray*}
Then $A(C_{cn}\times C_T)\subset C_T$, and the linear map $A$ is
continuous. For all $\chi\in C_{cn}$, for $w,w_1$ in $C_T$, and for $0\le
t\le T$,
$$
|A(\chi,w)(t)-A(\chi,w_1)(t)|=\left|\int_0^tRw_sds-\int_0^tRw_{1,s}ds\right|\le
T|R|_{L_c(C_{cn},\C^n)}|w-w_1|_T.
$$
It follows that each map $A(\chi,\cdot):C_T\to C_T$, $\chi\in C_{cn}$,
is a contraction, with a unique fixed point $w^{\chi}\in C_T$. The
linear map
$$
C_{cn}\ni\chi\mapsto w^{\chi}\in C_T
$$
is continuous, due to the estimate
$$
|w^{\chi}|_T=|A(\chi,w^{\chi})|_T\le(2|L|_{L_c(C_{cn},\C^n)}+2+T|R|_{L_c(C_{cn},\C^n)})|\chi|+T|R|_{L_c(C_{cn},\C^n)}|w^{\chi}|_T
$$
which gives
$$
|w^{\chi}|_T\le\frac{2|L|_{L_c(C_{cn},\C^n)}+2+T|R|_{L_c(C_{cn},\C^n)}}{1-T|R|_{L_c(C_{cn},\C^n)}}|\chi|.
$$
Using the equation
$$
v^{1,\chi}(t)=w^{\chi}(t)+\chi^c(t)\quad\text{for}-h\le t\le T
$$
we obtain a continuous map $v^{1,\chi}:[-h,T]\to\C^n$ such that the map
$$
v^{1,\chi}-L\circ V^{1,\chi}:[0,T]\ni t\mapsto v^{1,\chi}(t)-Lv^{1,\chi}_t\in\C^n
$$
is differentiable and satisfies Eq. (5.1) on $[0,T]$.

Also, $v^{1,\chi}$ is uniquely determined by $\chi$, and the linear map
$$
C_{cn}\ni\chi\mapsto v^{1,\chi}\in C_T
$$
is continuous.

2. Let $\phi\in C_{cn}$ be given. Using the result of part 1 we get a sequence of
continuous solutions $v^{(j)}:[-h,T]\to\C^n$ of Eq. (5.1) in the sense of part 1, with
$v^{(1)}=v^{1,\phi}$, and $v^{(j+1)}_0=v^{(j)}_T$ for all $j\in\N$. The
equations $v(t)=v^{(j)}(t-(j-1)T)$ for $(j-1)T\le t\le jT$,
$j\in\N$, define the solution $v=v^{\phi}$, and the proof is easily
completed.
\end{proof}

The maps $S(t)$, $t\ge0$, of Proposition 5.1 constitute a strongly continuous semigroup on the space $C_{cn}$.

In the sequel we consider the extensions $L_E:B_{nA}\to(\C^n)^{\ast\ast}$ and
$R_E:B_{n,A}\to(\C^n)^{\ast\ast}$ of $L$ and $R$, respectively, and discuss
solutions of an integrated version of the IVP (5.1)- (5.2.

\begin{prop}
Suppose $v:[-h,\infty)\to\C^n$ is a solution of Eq. (5.1). Then all restrictions of the map
$$
[0,\infty)\ni s\mapsto R_Ev_s\in(\C^n)^{\ast\ast}
$$
to bounded intervals have a weak-star integral, with
\begin{equation}
\iota(v(t)) -L_Ev_t=\iota(v(0))-L_Ev_0+\int_{(0,t)}R_Ev_sdm(s)
\end{equation}
for all $t>0$. 
\end{prop}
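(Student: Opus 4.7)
The plan is to integrate Eq. (5.1) in $\C^n$ by the Riemann integral, then transport the resulting identity into $(\C^n)^{\ast\ast}$ via $\iota$ and identify each term using Propositions 3.4 (iii) and 4.2.

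First, since $v:[-h,\infty)\to\C^n$ is continuous, the curve $[0,\infty)\ni s\mapsto v_s\in C_{cn}$ is continuous, hence so is $s\mapsto Rv_s\in\C^n$. By Eq. (5.1) the map $v-L\circ V$ is therefore $C^1$ on $[0,\infty)$, and the fundamental theorem of calculus for Riemann integrals in $\C^n$ yields
$$
v(t)-Lv_t=v(0)-Lv_0+\int_0^tRv_s\,ds\qquad\text{for all }t>0.
$$
Applying the isometric embedding $\iota:\C^n\to(\C^n)^{\ast\ast}$ (which is continuous and linear, so commutes with the Riemann integral) and using Proposition 3.4 (iii), which gives $\iota(L\phi)=L_E\phi$ and $\iota(R\phi)=R_E\phi$ for every $\phi\in C_{cn}$, turns this into
$$
\iota(v(t))-L_Ev_t=\iota(v(0))-L_Ev_0+\iota\!\left(\int_0^tRv_s\,ds\right).
$$

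Existence of the weak-star integral $\int_{(0,t)}R_Ev_s\,dm(s)\in(\C^n)^{\ast\ast}$ on every bounded subinterval is immediate from Proposition 4.2 applied with $f=v$ (continuous, hence with property (A)), $\Lambda=R$, $J=[-h,\infty)$, and $J_0=[0,\infty)$. So the only remaining content is the identification
$$
\iota\!\left(\int_0^tRv_s\,ds\right)=\int_{(0,t)}R_Ev_s\,dm(s),
$$
which I would establish by evaluating both sides on an arbitrary $y^{\ast}\in(\C^n)^{\ast}$. On the left, $y^{\ast}$ commutes with the Riemann integral, giving $\int_0^ty^{\ast}(Rv_s)\,ds$. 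On the right, the definition of the weak-star integral and of $R_E$ gives $\int_{(0,t)}(y^{\ast}\circ R)_ev_s\,dm(s)$, and since $v_s\in C_{cn}$ the extended functional $(y^{\ast}\circ R)_e$ agrees with $y^{\ast}\circ R$ on $v_s$, so the integrand is $s\mapsto y^{\ast}(Rv_s)$. This integrand is continuous, so its Lebesgue integral over $(0,t)$ equals its Riemann integral over $[0,t]$, and the two sides coincide on every $y^{\ast}$.

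I do not anticipate a real obstacle: the proof is essentially bookkeeping that connects the classical Riemann-integrated form of Eq. (5.1) with the weak-star framework of Sections 3--4. The only point requiring care is ensuring that the passage from $\int_0^tRv_s\,ds$ (a vector in $\C^n$) to the weak-star integral in $(\C^n)^{\ast\ast}$ is mediated correctly by $\iota$ on each testing functional $y^{\ast}$, which is precisely what Proposition 3.4 (iii) supplies on the continuous segments $v_s$.
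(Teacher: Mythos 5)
Your proof is correct and follows essentially the same route as the paper's: integrate Eq.\ (5.1) in $\C^n$, push through $\iota$, invoke Proposition 3.4 (iii) to identify $\iota\circ L$ with $L_E$ and $\iota\circ R$ with $R_E$ on continuous segments, and verify $\iota\bigl(\int_0^t Rv_s\,ds\bigr)=\int_{(0,t)}R_Ev_s\,dm(s)$ by testing against an arbitrary $y^{\ast}\in(\C^n)^{\ast}$. The only cosmetic difference is that you cite Proposition 4.2 up front for existence of the weak-star integral, whereas the paper obtains existence as a byproduct of the termwise computation.
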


\begin{proof}
The maps $v$, $V:[0,\infty)\ni t\mapsto v_t\in C_{cn}$ and $R\circ V$ are continuous. As the right hand side of Eq. (5.1) is given by a continuous
map we can integrate this equation and find
$$
v(t)-Lv_t=v(0)-Lv_0+\int_{(0,t)}Rv_sdm(s)\quad\text{for every}\quad t>0.
$$
Using Proposition 3.4 (iii) we infer
\begin{eqnarray*}
\iota(v(t))-L_Ev_t & = & \iota(v(t))-\iota(Lv_t)=\iota(v(0))-\iota(Lv_0)+\iota\left(\int_{(0,t)}Rv_sdm(s)\right)\\
& = & \iota(v(0))-L_Ev_0+\iota\left(\int_{(0,t)}Rv_sdm(s)\right)
\end{eqnarray*}
for such $t$. Finally, use
\begin{eqnarray*}
\left(\iota\left(\int_{(0,t)}Rv_sdm(s)\right)\right)y^{\ast} & = & y^{\ast}\left(\int_{(0,t)}Rv_sdm(s)\right)\\
& = &  y^{\ast}\left(\int_0^tRv_sds\right)\quad\text{(Riemann integral)}\\
& = &  \int_0^ty^{\ast}(Rv_s)ds =   \int_0^t(\iota(Rv_s))y^{\ast}ds\\
& = &  \int_0^t(R_Ev_s)y^{\ast}ds\quad\text{(with Proposition 3.4 (iii))}\\
& = &   \int_{(0,t)}(R_Ev_s)y^{\ast}dm(s)=  \left(\int_{(0,t)}R_Ev_sdm(s)\right)y^{\ast}
\end{eqnarray*}
for each $y^{\ast}\in(C^n)^{\ast}$.
\end{proof}

Set
$$
\Delta_e=\frac{\Delta}{2}.
$$

\begin{prop}
For each $\phi\in B_{nA}$ with $\phi(t)=0$ on $[-h,-\Delta_e]$,
$L_E\phi=0$.
\end{prop}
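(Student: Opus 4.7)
The strategy is to approximate $\phi$ pointwise by continuous functions that actually vanish on $[-h,-\Delta]$, so that hypothesis $(\Delta)$ applies to the approximants and passes to the limit via the pointwise-convergence property of the extension.

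First I would fix a continuous cutoff function $\eta:I\to[0,1]$ with $\eta(t)=0$ on $[-h,-\Delta]$ and $\eta(t)=1$ on $[-\Delta_e,0]$ (for example piecewise linear on the remaining interval $[-\Delta,-\Delta_e]$). The key point is that because $\phi(t)=0$ on $[-h,-\Delta_e]$ and $\eta(t)=1$ on $[-\Delta_e,0]$, one has $\eta(t)\phi(t)=\phi(t)$ for every $t\in I$: on $[-h,-\Delta_e]$ both sides are $0$, and on $[-\Delta_e,0]$ multiplication by $1$ does nothing.

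Next, since $\phi\in B_{nA}$, I would invoke property (A) componentwise (compare the argument in the proof of Corollary 3.3(ii)) to obtain a uniformly bounded sequence $(\phi_m)_{m\in\N}$ in $C_{cn}$ converging pointwise to $\phi$ on $I$. Setting $\psi_m=\eta\phi_m\in C_{cn}$, the sequence $(\psi_m)$ is still uniformly bounded, each $\psi_m$ vanishes on $[-h,-\Delta]$, and $\psi_m\to\eta\phi=\phi$ pointwise on $I$. Hypothesis $(\Delta)$ applied to each $\psi_m$ gives $L\psi_m=0$, hence by Proposition 3.4(iii),
\[
L_E\psi_m=(\iota\circ L)\psi_m=\iota(0)=0\in(\C^n)^{\ast\ast}\quad\text{for every }m\in\N.
\]

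Finally, to pass to the limit I would test against an arbitrary $y^{\ast}\in(\C^n)^{\ast}$. Since $y^{\ast}\circ L\in C_{cn}^{\ast}$ and $\psi_m\to\phi$ pointwise with uniform bound, Corollary 3.3(i) yields
\[
(L_E\phi)y^{\ast}=(y^{\ast}\circ L)_e\phi=\lim_{m\to\infty}(y^{\ast}\circ L)_e\psi_m=\lim_{m\to\infty}(L_E\psi_m)y^{\ast}=0.
\]
As $y^{\ast}$ was arbitrary, $L_E\phi=0$. The argument is essentially routine once the cutoff is in place; the only thing one has to verify carefully is the identity $\eta\phi=\phi$, which is exactly where the gap between $\Delta$ (used in the hypothesis) and $\Delta_e=\Delta/2$ (used in the conclusion) is consumed.
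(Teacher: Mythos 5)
Your proof is correct and follows essentially the same route as the paper's: choose a continuous cutoff $\eta$ (the paper calls it $\chi$) vanishing on $[-h,-\Delta]$ and equal to $1$ on $[-\Delta_e,0]$, multiply an approximating sequence by it so hypothesis $(\Delta)$ applies, and pass to the limit using Corollary 3.3(i) (the paper cites Proposition 3.5(ii) with $J_0=\{0\}$, which amounts to the same thing). The key identity $\eta\phi=\phi$, which you flag as the only delicate point, is exactly the observation the paper's proof implicitly relies on when it asserts that $\chi\phi_j\to\phi$ pointwise.
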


\begin{proof}
Choose a bounded sequence of functions $\phi_j\in C_{cn}$ which
converges pointwise to $\phi$, and choose $\chi\in C_c$ with
$\chi(t)=0$ on $[-h,-\Delta]$ and $\chi(t)=1$ on $[-\Delta_e,0]$.
Then the functions $\chi\phi_j\in C_{cn}$ are uniformly bounded
and converge pointwise to $\phi$.  By Proposition 3.4 (iii) and by hypothesis $(\Delta)$,
$L_E\chi\phi_j=\iota(L\chi\phi_j)=\iota0=0$ for all $j$. Using Proposition 3.5 (ii)  with  $J=I$ and $J_0=\{0\}$ we infer
$$
(L_E\phi)y^{\ast}=\lim_{j\to\infty}(L_E\chi\phi_j)y^{\ast}=0
$$
for all $y^{\ast}\in(\C^n)^{\ast}$.
\end{proof}

Proposition 5.3 will be used in the proof of Proposition 5.7  below.


\begin{prop}
Let $\phi\in B_{nA}$ be given. Then the continuation $\phi^c:\R\to\C^n$ defined by  $\phi^c(t)=\phi(-h)$ for
$t<-h$ and $\phi^c(t)=\phi(0)$ for $t>0$ has property (A), all segments $(\phi^c)_t$, $t\in\R$, belong to $B_{nA}$,
and the map
$$
\iota^{-1}\circ L_E\circ\Phi^c\,\,\text{with}\,\,\Phi^c:\R\ni t\mapsto
(\phi^c)_t\in B_{nA}
$$
has property (A). Each segment of this map belongs to
$B_{nA}$.
\end{prop}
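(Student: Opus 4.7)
The plan is to build the approximating sequence for $\phi^c$ from the one for $\phi$, then transport it through $L_E$ using Proposition 3.5~(ii), exploiting that $\C^n$ is finite-dimensional so $\iota:\C^n\to(\C^n)^{\ast\ast}$ is a topological isomorphism.

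First I would choose a uniformly bounded sequence $(\phi_m)_{m=1}^\infty$ in $C_{cn}$ converging pointwise to $\phi$, which exists component-wise by definition of $B_{nA}$. For each $m$ set $\phi_m^c(t)=\phi_m(-h)$ for $t<-h$, $\phi_m^c(t)=\phi_m(t)$ on $I$, and $\phi_m^c(t)=\phi_m(0)$ for $t>0$. Each $\phi_m^c:\R\to\C^n$ is continuous, $\sup_m|\phi_m^c|=\sup_m|\phi_m|<\infty$, and $\phi_m^c\to\phi^c$ pointwise on $\R$ (by cases on the three regions). Hence $\phi^c$ has property~(A). For any $t\in\R$ the segment $(\phi_m^c)_t:I\to\C^n$, $s\mapsto\phi_m^c(t+s)$, is continuous, the family is uniformly bounded by $\sup_m|\phi_m|$, and converges pointwise to $(\phi^c)_t$; therefore $(\phi^c)_t\in B_{nA}$.

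Next I would apply Proposition~3.5~(ii) with $\Lambda=L$, $Y=\C^n$, $J=J_0=\R$, $v=\phi^c$ and the approximating sequence $v_m=\phi_m^c$. This yields that the continuous maps
\[
w_m:\R\ni t\mapsto L_E(\phi_m^c)_t\in(\C^n)^{\ast\ast},\quad m\in\N,
\]
are uniformly bounded and converge pointwise in the weak-star topology to $w:\R\ni t\mapsto L_E(\phi^c)_t$. Now use Proposition~3.4~(iii): since $(\phi_m^c)_t\in C_{cn}$, we have $w_m(t)=\iota(L(\phi_m^c)_t)$, so $\iota^{-1}\circ w_m:\R\ni t\mapsto L(\phi_m^c)_t\in\C^n$ is continuous (composition of the continuous segment map with $L$) and uniformly bounded by $|L|_{L_c(C_{cn},\C^n)}\sup_m|\phi_m^c|$. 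Because $\C^n$ is finite-dimensional, $\iota$ is a linear homeomorphism of $\C^n$ onto $(\C^n)^{\ast\ast}$; in particular $w(t)\in\iota(\C^n)$ and the weak-star convergence $w_m(t)\to w(t)$ transfers under $\iota^{-1}$ to convergence in $\C^n$. Consequently $\iota^{-1}\circ L_E\circ\Phi^c$ is the pointwise limit of the uniformly bounded sequence $(\iota^{-1}\circ w_m)_{m=1}^\infty$ of continuous maps $\R\to\C^n$, and therefore has property~(A).

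Finally, to see that every segment of $f=\iota^{-1}\circ L_E\circ\Phi^c$ lies in $B_{nA}$, fix $t\in\R$ and use the sequence $f_m=\iota^{-1}\circ w_m$ from the previous step: the restrictions $I\ni s\mapsto f_m(t+s)\in\C^n$ are continuous, uniformly bounded by $\sup_m|f_m|$, and converge pointwise to $f_t$. Hence each component of $f_t$ is a pointwise limit of a uniformly bounded sequence of continuous functions, i.e.~$f_t\in B_{nA}$. The main subtlety is only notational, namely tracking that $\iota$ is bijective here so that $\iota^{-1}$ makes sense globally on $(\C^n)^{\ast\ast}$ and reduces weak-star pointwise convergence to ordinary pointwise convergence in $\C^n$; once this is pinned down the result follows from Propositions 3.4~(iii) and~3.5~(ii).
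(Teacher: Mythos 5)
Your proof is correct and follows essentially the same route as the paper's: approximate $\phi$ by a bounded sequence $(\phi_m)$ in $C_{cn}$, pass to the continuous extensions $\phi_m^c$ to obtain property (A) for $\phi^c$ and its segments, invoke Proposition 3.5(ii) to transfer property (A) through $L_E$ (in the weak-star sense), and then use that $\iota:\C^n\to(\C^n)^{\ast\ast}$ is a norm-preserving isomorphism (so weak-star equals norm topology) to pull this back through $\iota^{-1}$. You spell out a few details more explicitly than the paper—in particular the role of Proposition 3.4(iii) in identifying $w_m(t)=\iota(L(\phi_m^c)_t)$, and the point that surjectivity of $\iota$ on finite-dimensional $\C^n$ is what makes $\iota^{-1}$ globally available—but the structure and the key lemmas used are identical.
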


\begin{proof}
Choose a bounded sequence of maps  $\phi_j\in C_{cn}$ which
converges pointwise to $\phi$. Then the extensions $(\phi_j)^c$
are continuous and uniformly bounded and converge pointwise to
$\phi^c$. So $\phi^c$ has property (A). This implies
that each segment $(\phi^c)_t$, $t\in\R$, has property (A), and
belongs to $B_{nA}$. Proposition 3.5 (ii) yields that the map
$$
\R\ni t\mapsto L_E\circ\Phi^c\in(\C^n)^{\ast\ast}
$$ 
has property (A), with respect to the weak-star topology on $(\C^n)^{\ast\ast}$, which coincides with the norm topology. As $\iota$ is a norm-preserving isomorphism here we infer easily that the map $\iota^{-1}\circ L_E\circ\Phi^c$ has property (A) as well. 
\end{proof}

\begin{prop}
(An auxiliary integral equation) (i) Let $t_0>0$, $\phi\in B_{nA}$, and let a continuous map $w:[0,t_0]\to\C^n$ be given with
$$
\iota(w(0))=\iota(\phi(0))-L_E\phi.
$$ 
Then the map $w^{\phi^c}:(-\infty,t_0]\to\C^n$ given by 
\begin{eqnarray*}
w^{\phi^c}(t) & = & w(t)\quad\text{on}\quad[0,t_0],\\
w^{\phi^c}(t) & = & \phi^c(t)-(\iota^{-1}\circ L_E\circ\Phi^c)(t)\quad\text{on}\quad(-\infty,0],
\end{eqnarray*}
has property (A). All segments $(w^{\phi^c})_s$,  $s\le t_0$, and  $(\iota^{-1}\circ L_E\circ\Phi^c)_s$, $s\in\R$, belong to $B_{nA}$.
For all $t\in(0,t_0]$ the weak-star integrals
$$
\int_{(0,t)}R_E(w^{\phi^c})_sdm(s)\quad\text{and}\quad\int_{(0,t)}R_E(\iota^{-1}\circ L_E\circ\Phi^c)_sdm(s)
$$
exist.

(ii) There exists
$t_0\in(0,\Delta_e]$ such that for every $\phi\in B_{nA}$ there is a
uniquely determined continuous map $w:[0,t_0]\to\C^n$
with $\iota(w(0))=\iota(\phi(0))-L_E\phi$ such that for all $t\in(0,t_0]$  we have
\begin{equation}
\iota(w(t))=\iota(\phi(0))-L_E\phi+\int_{(0,t)}R_E(w^{\phi^c})_sdm(s)+\int_{(0,t)}R_E(\iota^{-1}\circ L_E\circ\Phi^c)_sdm(s).
\end{equation}

(iii) (Uniform bound) There exists $c_0\ge 0$ with
$$
\max_{0\le t\le t_0}|w(\phi)(t)|\le c_0|\phi|
$$
for all $\phi\in B_{nA}$, with $w(\phi)=w$ from (i).
\end{prop}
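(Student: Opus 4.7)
The plan is to prove (i) via the concatenation Proposition 3.6(ii) and Proposition 4.2, to prove (ii) by Banach's fixed-point theorem applied to the operator obtained from (5.5) after inverting the isometric isomorphism $\iota:\C^n\to(\C^n)^{\ast\ast}$ (available because $\C^n$ is reflexive), and to deduce (iii) by a direct estimate of (5.5) once the fixed point is in hand.

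\smallskip

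\textbf{Part (i).} Proposition 5.4 gives that both $\phi^c$ and $\iota^{-1}\circ L_E\circ\Phi^c$ have property (A) on $\R$, and restriction/subtraction preserves property (A), so the map $\phi^c-\iota^{-1}\circ L_E\circ\Phi^c$ on $(-\infty,0]$ has property (A). Rewriting the hypothesis as $w(0)=\phi(0)-\iota^{-1}(L_E\phi)=\phi^c(0)-(\iota^{-1}\circ L_E\circ\Phi^c)(0)$ (using $(\phi^c)_0=\phi$), the two pieces of $w^{\phi^c}$ agree at the join. Proposition 3.6(ii) then yields property (A) for $w^{\phi^c}$; Proposition 3.5(ii) places every segment $(w^{\phi^c})_s$, $s\le t_0$, and $(\iota^{-1}\circ L_E\circ\Phi^c)_s$, $s\in\R$, into $B_{nA}$; and Proposition 4.2 (applied with $f=w^{\phi^c}$ on $[-h,t_0]$, respectively $f=\iota^{-1}\circ L_E\circ\Phi^c$ on $\R$) yields existence of both weak-star integrals in $(\C^n)^{\ast\ast}$.

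\smallskip

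\textbf{Part (ii).} Choose $t_0\in(0,\Delta_e]$ so small that $q:=t_0|R|_{L_c(C_{cn},\C^n)}<1$; note that $t_0$ depends only on $R$ and $\Delta$, not on $\phi$. For $\phi\in B_{nA}$ let $\Omega_\phi$ be the complete metric space of continuous maps $w:[0,t_0]\to\C^n$ with $w(0)=\phi(0)-\iota^{-1}(L_E\phi)$, equipped with the sup norm. Define $T:\Omega_\phi\to\Omega_\phi$ by letting $(Tw)(t)$ be $\iota^{-1}$ applied to the right-hand side of (5.5); continuity of $Tw$ follows by estimating the difference of the integrals over adjacent sub-intervals as in the proof of Proposition 4.3, and $(Tw)(0)$ has the required value trivially. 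Since $w_1^{\phi^c}-w_2^{\phi^c}$ vanishes on $(-\infty,0]$ and equals $w_1-w_2$ on $[0,t_0]$, Proposition 4.2 yields
$$
|(Tw_1)(t)-(Tw_2)(t)|\le t\,|R|_{L_c(C_{cn},\C^n)}\sup_{0\le s\le t}|w_1(s)-w_2(s)|\le q\,|w_1-w_2|_\infty,
$$
so $T$ is a $q$-contraction and Banach's fixed-point theorem produces the unique $w=w(\phi)\in\Omega_\phi$ satisfying (5.5).

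\smallskip

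\textbf{Part (iii).} By Proposition 3.4(ii) and the isometry of $\iota$ one has $|\iota^{-1}(L_E\phi)|\le|L|_{L_c(C_{cn},\C^n)}|\phi|$; on $[-h,0]$ both $|\iota^{-1}\circ L_E\circ\Phi^c|\le|L|_{L_c(C_{cn},\C^n)}|\phi|$ and $|w^{\phi^c}|\le(1+|L|_{L_c(C_{cn},\C^n)})|\phi|$, while on $[0,t_0]$ trivially $|w^{\phi^c}|\le\max_{0\le t\le t_0}|w(\phi)(t)|$. Applying the integral estimate of Proposition 4.2 to (5.5) and solving for $\max_{0\le t\le t_0}|w(\phi)(t)|$ (possible since $t_0|R|<1$) yields $|w(\phi)(t)|\le c_0|\phi|$ on $[0,t_0]$ with $c_0$ independent of $\phi$. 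The main subtlety sits in (ii): one must recognise that (5.5) is really an equation in $\iota(\C^n)=(\C^n)^{\ast\ast}$, so the Banach space fixed-point theorem applies in the familiar $\C^n$-setting via the isometric identification by $\iota$; once this is in place, the contraction estimate follows directly from Proposition 4.2 and the vanishing of $w_1^{\phi^c}-w_2^{\phi^c}$ on $(-\infty,0]$.
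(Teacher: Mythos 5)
Your proposal tracks the paper's proof of Proposition 5.5 essentially step for step: part (i) via Propositions 5.4, 3.6(ii), 3.5(ii) and 4.2; part (ii) by choosing $t_0$ with $t_0|R|<1$, working in the closed affine subspace (your $\Omega_\phi$, the paper's $W_\phi$) of the Banach space of continuous maps $[0,t_0]\to\C^n$, defining the contraction by transporting (5.5) through the bijection $\iota$, and exploiting that $w_1^{\phi^c}-w_2^{\phi^c}$ vanishes on $(-\infty,0]$; and part (iii) by an integral estimate and absorption of the $t_0|R||w|$ term. The only additions worth noting are cosmetic: you explicitly check that the two branches of $w^{\phi^c}$ agree at $t=0$ (the paper takes this for granted), and you appeal to reflexivity of $\C^n$ to justify $\iota^{-1}$ on all of $(\C^n)^{\ast\ast}$ where the paper simply phrases the fixed-point equation in $(\C^n)^{\ast\ast}$ via $\iota$; both are equivalent formulations of the same argument.
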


\begin{proof}
1. Proof of (i). Proposition 5.4 yields that $\phi^c$ and $\iota^{-1}\circ L_E\circ\Phi^c$ have property (A). Their difference also has property (A). Using this and the continuity of $w$ and Proposition 3.6 (ii) one finds that $w^{\phi^c}$ has property (A). Then all segments $(w^{\phi^c})_s$, $s\le t_0$, and $(\iota^{-1}\circ L_E\circ\Phi^c)_s$, $s\in\R$, have property (A) and belong to $B_{nA}$.
Proposition 4.2  shows that for $0<t\le t_0$ the weak-star integrals
 $$
\int_{(0,t)}R_E(w^{\phi^c})_sdm(s)\quad\text{and}\quad\int_{(0,t)}R_E(\iota^{-1}\circ L_E\circ\Phi^c)_sdm(s)
$$
exist and belong to $(\C^n)^{\ast\ast}$.

2. Proof of (ii).  Choose $t_0\in(0,\Delta_e]$ with $t_0|R|_{L_c(C_{cn},\C^n)}<1$. Let $W$
denote the Banach space of continuous maps $w:[0,t_0]\to\C^n$, with
$|w|=\max_{0\le t\le t_0}|w(t)|$. For $\phi\in B_{nA}$
consider the closed subset $W_{\phi}\subset W$ given by the equation
$\iota(w(0))=\iota(\phi(0))-L_E\phi$. For $w\in W_{\phi}$ the map $K_{\phi}w:[0,t_0]\to\C^n$ given by $\iota(K_{\phi}w)(0)=\iota(\phi(0))-L_E\phi$ and
$$
\iota((K_{\phi}w)(t))=\iota(\phi(0))-L_E\phi+\int_{(0,t)}R_E(w^{\phi^c})_sdm(s)+\int_{(0,t)}R_E(\iota^{-1}\circ L_e\circ\Phi^c)_sdm(s)
$$
for $0<t\le t_0$ is continuous, due to the fact that $\iota$ is norm-preserving and to the estimate 
$$
\left|\int_{(u,t)}R_E(w^{\phi^c})_sdm(s)\right| + \left|\int_{(u,t)}R_E(\iota^{-1}\circ L_e\circ\Phi^c)_sdm(s)\right|
$$
$$
=\sup_{|y^{\ast}|\le 1} \left|\left(\int_{(u,t)}R_E(w^{\phi^c})_sdm(s)\right)y^{\ast}\right|+\sup_{|y^{\ast}|\le 1}\left|\left (\int_{(u,t)}R_E(\iota^{-1}\circ L_e\circ\Phi^c)_sdm(s)\right)y^{\ast}\right|
$$
$$
\le \sup_{|y^{\ast}|\le 1} \left|\int_{(u,t)}(R_E(w^{\phi^c})_s)y^{\ast}dm(s)\right|+\sup_{|y^{\ast}|\le 1}\left|\int_{(u,t)}(R_E(\iota^{-1}\circ L_e\circ\Phi^c)_s)y^{\ast}dm(s)\right|
$$
$$
\le  \sup_{|y^{\ast}|\le 1} \int_{(u,t)}|(R_E(w^{\phi^c})_s)y^{\ast}|dm(s)+\sup_{|y^{\ast}|\le 1}\int_{(u,t)}|R_E(\iota^{-1}\circ L_e\circ\Phi^c)_s)y^{\ast}|dm(s)
$$
$$
\le   \int_{(u,t)}|R_E(w^{\phi^c})_s|dm(s)+\int_{(u,t)}|R_E(\iota^{-1}\circ L_e\circ\Phi^c)_s|dm(s)
$$
$$
\le \int_{(u,t)}|R|_{L_c(C_{cn},\C^n)}|(w^{\phi^c})_s|dm(s)
$$
$$
+\int_{(u,t)}|R|_{L_c(C_{cn},\C^n)}|(\iota^{-1}\circ L_e\circ\Phi^c)_s|dm(s)\quad\text{(with Proposition 3.4 (ii))}
$$
$$
\le(t-u)|R|_{L_c(C_{cn},\C^n)}|(|w|+|\phi|+|L|_{L_c(C_{cn},\C^n)}|\phi|)
$$
$$
+(t-u)|R|_{L_c(C_{cn},\C^n)}|L|_{L_c(C_{cn},\C^n)}|\phi|
\quad\text{for}\quad 0<u<t\le t_0.
$$
The map 
$$
K_{\phi}:W_{\phi}\ni w\mapsto K_{\phi}w\in W_{\phi}
$$
 is a contraction
since for all $w,\hat{w}$ in $W_{\phi}$ and for all $t\in(0,t_0]$ we
have
\begin{eqnarray*}
|(K_{\phi}w)(t)-(K_{\phi}\hat{w})(t)| & = & \left|\int_{(0,t)}R_E((w^{\phi^c})_s-(\hat{w}^{\phi^c})_s)dm(s)\right|\\
& = &  sup_{|y^{\ast}|\le1} \left|\left(\int_{(0,t)}R_E(w^{\phi^c})_s-(\hat{w}^{\phi^c})_s)dm(s)\right)y^{\ast}\right|\\
& = &  sup_{|y^{\ast}|\le1} \left|\int_{(0,t)}(R_E(w^{\phi^c})_s-(\hat{w}^{\phi^c})_s))y^{\ast}dm(s)\right|\\
& \le & t_0|R|_{L_c(C_{cn},\C^n)}\sup_{0\le s\le t_0}|(w^{\phi^c})_s-(\hat{w}^{\phi^c})_s|\\
& &  \text{(with Proposition 3.4)}\\
& \le & t_0|R|_{L_c(C_{cn},\C^n)}|w-\hat{w}|
\end{eqnarray*}
(with $w^{\phi^c}(u)=\hat{w}^{\phi^c}(u)$ for $u<0$, and $w^{\phi^c}(u)=w(u)$ on $[0,t_0]$, and
$\hat{w}^{\phi^c}(u)=\hat{w}(u)$ on $[0,t_0]$). The unique fixed point $w=w(\phi)$ of $K_{\phi}$ is the desired solution of the integral equation (5.5). In order to obtain uniqueness observe that each continuous map $\hat{w}:[0,t_0]\to\C^n$ which satisfies $\iota(\hat{w}(0))=\iota(\phi(0))-L_E\phi$ and Eq. (5.5) on $(0,t_0]$ belongs to $W_{\phi}$ and is indeed a fixed point of $K_{\phi}$, so $\hat{w}=w(\phi)$.

3. Proof of (iii). Let $\phi\in B_{nA}$ be given. Set $w=w(\phi)$ and $w^c=w^{\phi^c}$. By continuity there exists
$t\in[0,t_0]$ with
$$
|w(t)|=|w|.
$$
In case $t=0$ Proposition 3.4 (ii) and the fact that $\iota$ preserves 
the norm combined yield $|w(t)|\le(1+|L|)|\phi|$. In case $0<t\le t_0$ we have
\begin{eqnarray*}
|w| & = & |w(t)| =|\iota(w(t))|\le|\iota(\phi(0))|+|L_E\phi|+\sup_{|y^{\ast}|\le 1} \left|\left(\int_{(0,t)}R_E(w^c)_sdm(s)\right)y^{\ast}\right|\\
& & +\sup_{|y^{\ast}|\le 1}\left|\left (\int_{0,t)}R_E(\iota^{-1}\circ L_e\circ\Phi^c)_sdm(s)\right)y^{\ast}\right|\\
& \le & |\phi|+|L|_{L_c(C_{cn},\C^n)}|\phi|+\sup_{|y^{\ast}|\le 1} \left|\int_{(0,t)}(R_E(w^c)_s)y^{\ast}dm(s)\right|\\
& & +\sup_{|y^{\ast}|\le 1}\left|\int_{0,t)}(R_E(\iota^{-1}\circ L_e\circ\Phi^c)_s)y^{\ast}dm(s)\right|\\
& \le & (1+|L|_{L_c(C_{cn},\C^n)})|\phi|+t_0|R|_{L_c(C_{cn},\C^n)}(|w|+|\phi|+|L|_{L_c(C_{cn},\C^n)}|\phi|)\\
& & + t_0|R|_{L_c(C_{cn},\C^n)}|L|_{L_c(C_{cn},\C^n)}|\phi|\\
& = & |\phi|(1+|L|_{L_c(C_{cn},\C^n)}+t_0|R|_{L_c(C_{cn},\C^n)}(1+|L|_{L_c(C_{cn},\C^n)})\\
& & +t_0|R|_{L_c(C_{cn},\C^n)}|L|_{L_c(C_{cn},\C^n)})+t_0|R|_{L_c(C_{cn},\C^n)}|w|.
\end{eqnarray*}
It follows that
\begin{eqnarray*}
(1-t_0|R|_{L_c(C_{cn},\C^n)})|w| & \le  & (1+|L|_{L_c(C_{cn},\C^n)}\\
& & +t_0|R|_{L_c(C_{cn},\C^n)}(1+2|L|_{L_c(C_{cn},\C^n)}))|\phi|.
\end{eqnarray*}
\end{proof}

\begin{prop}
(Continuous dependence on initial data with respect to pointwise
convergence) Suppose the maps $\phi_j\in B_{nA}$, $j\in\N$, are
uniformly bounded and converge pointwise to $\phi\in B_{nA}$. Then
$$
w(\phi_j)\to
w(\phi)\,\,\text{uniformly}\,\,\text{as}\,\,j\to\infty.
$$
\end{prop}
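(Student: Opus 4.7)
The plan is to subtract the integral equation (5.5) for $w_j := w(\phi_j)$ and $w := w(\phi)$, isolate the dependence on the unknown $u_j := w_j - w$, and absorb that dependence using the contraction factor $t_0\,|R|_{L_c(C_{cn},\C^n)}<1$ built into the choice of $t_0$. For $t\in(0,t_0]$, subtracting (5.5) applied to $\phi_j$ and to $\phi$ yields
\[
\iota\,u_j(t) = \iota(\phi_j(0)-\phi(0)) - L_E(\phi_j-\phi) + \int_{(0,t)} R_E\Psi_j(s)\,dm(s) + \int_{(0,t)} R_E\Xi_j(s)\,dm(s),
\]
with $\Psi_j(s) := (w_j^{\phi_j^c})_s - (w^{\phi^c})_s$ and $\Xi_j(s) := (\iota^{-1}\circ L_E\circ\Phi_j^c)_s - (\iota^{-1}\circ L_E\circ\Phi^c)_s$. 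I would split $\Psi_j(s)=\Psi_j'(s)+\Psi_j''(s)$ according as $s+u>0$ or $s+u\le 0$: the \emph{forward} part $\Psi_j'(s)(u) = u_j(s+u)$ on $\{u\in I:s+u>0\}$ (and zero elsewhere) still depends on the unknown, while the \emph{backward} part $\Psi_j''(s)(u)=[\phi_j^c-\phi^c-\iota^{-1}L_E\Phi_j^c+\iota^{-1}L_E\Phi^c](s+u)$ on $\{u\in I:s+u\le 0\}$ (and zero elsewhere) depends only on the initial data. By Proposition 3.4 (ii) the forward integral is bounded by $t_0\,|R|_{L_c(C_{cn},\C^n)}\,|u_j|_\infty$.

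I would then show all remaining ingredients tend to $0$ in the norm of $\C^n$. The term $\iota(\phi_j(0)-\phi(0))$ vanishes by pointwise convergence at $0$. For $L_E(\phi_j-\phi)$, Corollary 3.3 (i) applied to each $y^{\ast}\circ L\in C_{cn}^{\ast}$ gives weak-star convergence in $(\C^n)^{\ast\ast}$, which coincides with norm convergence in finite dimension. For the backward integrals I would check that for each fixed $s\in(0,t_0)$ both $\Psi_j''(s)$ and $\Xi_j(s)$ belong to $B_{nA}$ and converge pointwise in $u$ to $0$: the convergence $\phi_j^c\to\phi^c$ is immediate from the constant extension, and the convergence $\iota^{-1}L_E\Phi_j^c\to\iota^{-1}L_E\Phi^c$ (pointwise) follows by a second application of Corollary 3.3 (i) to the segments $(\phi_j^c)_\tau\in B_{nA}$. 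These sequences are uniformly bounded by a constant depending only on $\sup_j|\phi_j|$ and $|L|_{L_c(C_{cn},\C^n)}$. A further application of Corollary 3.3 (i)--(ii), now to $y^{\ast}\circ R$, gives $R_E\Psi_j''(s)\to 0$ and $R_E\Xi_j(s)\to 0$ in $\C^n$ for each $s$, with norms dominated by a fixed constant independent of $j,s$. Lebesgue's dominated convergence theorem (Theorem 2.1) then delivers $\int_{(0,t_0)}|R_E\Psi_j''(s)|\,dm(s)\to 0$ and the analogous statement for $\Xi_j$, uniformly in $t\in(0,t_0]$.

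Collecting the estimates and using that $\iota$ preserves the norm, one obtains
\[
|u_j|_\infty \;\le\; \varepsilon_j + t_0\,|R|_{L_c(C_{cn},\C^n)}\,|u_j|_\infty \quad\text{with}\quad \varepsilon_j\to 0,
\]
and since $1-t_0\,|R|_{L_c(C_{cn},\C^n)}>0$ one concludes $|u_j|_\infty\to 0$, which is the desired uniform convergence. The hard part will be the careful bookkeeping in the split of $\Psi_j(s)$: one must cleanly separate the portion of the segments that still involves $u_j$, absorbed by the contraction, from the purely initial-data portion, handled by dominated convergence together with the bounded-pointwise continuity of $L_E$ and $R_E$. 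Measurability in $s$ and integrable domination of the backward-part integrands also have to be verified, but both follow from the framework of Sections 3 and 4.
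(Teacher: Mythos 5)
Your proof is correct, but it follows a genuinely different route from the paper's. The paper establishes uniform boundedness (part 5) and equicontinuity (part 8) of the sequence $(w_j)$, invokes Ascoli--Arzel\`a to extract a uniformly convergent subsequence $w_{j_k}\to\hat w$, verifies via Lebesgue dominated convergence that $\hat w$ satisfies the integral equation (5.5), invokes the uniqueness from Proposition 5.5 to conclude $\hat w=w(\phi)$, and finishes with the standard ``every subsequence has a further subsequence converging to the same limit'' argument. You instead subtract the two instances of (5.5), split the resulting integrand into the portion that still involves the unknown difference $u_j$ (the ``forward'' segments $\Psi_j'(s)=(\tilde u_j)_s$ with $\tilde u_j=u_j$ on $[0,t_0]$, $=0$ on $[-h,0)$) and a portion that depends only on the data, absorb the former via the contraction constant $t_0|R|<1$ already built into Proposition 5.5, and send the latter to $0$ by dominated convergence. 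This yields the inequality $|u_j|_\infty\le\varepsilon_j+t_0|R|\,|u_j|_\infty$ directly, with no compactness step and no passage to subsequences. Both proofs lean on the same infrastructure (Propositions 3.4, 3.5, 4.2, and Theorem 2.1); your version is more ``quantitative'' and, one could argue, more elementary, while the paper's version reuses an equicontinuity estimate it has to record anyway and keeps the role of uniqueness in Proposition 5.5 more visible.

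Two small bookkeeping points you flag yourself and which do need saying out loud in a polished write-up. First, $\Psi_j'(s)$ and $\Psi_j''(s)$ must each lie in $B_{nA}$ for $R_E$ to be applied to them separately; they arise as segments of the piecewise-defined maps $\tilde u_j$ and $\tilde\rho_j$ (where $\tilde\rho_j=\phi_j^c-\phi^c-\iota^{-1}L_E\Phi_j^c+\iota^{-1}L_E\Phi^c$ on $(-\infty,0]$, $=0$ on $(0,t_0]$), and the proof that such piecewise maps have property (A) is the ramp construction of Proposition 3.6(i), extended to pieces that merely have property (A) rather than being continuous; the junction values need not match for property (A) to survive, since one interpolates linearly over a shrinking interval. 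Second, the attribution ``by Proposition 3.4(ii)'' for the bound $t_0|R|\,|u_j|_\infty$ on the forward integral is a bit loose: that proposition gives the pointwise operator estimate $|R_E\psi|\le|R|\,|\psi|$, and the integrated estimate is the content of Proposition 4.2 (or the computation following Proposition 4.11). Neither point affects the validity of the argument.
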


\begin{proof}
1. Consider $t_0$ and $c_0$ as in Proposition 5.5.
Let $c=\sup_{j,t}|\phi_j(t)|<\infty$. Then also
$$
\sup_{j,t}|(\phi_j)^c(t)|\le c,
$$
and for each $t\in\R$ we have
$$
(\phi_j)^c(t)\to\phi^c(t)\,\,\text{as}\,\,j\to\infty.
$$
It follows that for each $s\in\R$ the segments $((\phi_j)^c)_s$
are uniformly bounded by $c$ and converge pointwise to
$(\phi^c)_s$ as $j\to\infty$. Due to Proposition 5.4 all these
segments belong to $B_{nA}$.

2. Using Proposition 3.5 (ii) we infer that for each $s\in\R$ we have
$$
(L_E\circ\Phi_j^c)(s)=L_E((\phi_j)^c)_s\to
L_E(\phi^c)_s=(L_E\circ\Phi^c)(s)\,\,\text{as}\,\,j\to\infty,
$$
with respect the weak-star topology on $(\C^n)^{\ast\ast}$, which coincides with the norm-topology. 
By Proposition 3.4,
$$
|(L_E\circ\Phi_j^c)(s)| \le  |L|_{L_c(C_{cn},\C^n)}|((\phi_j)^c)_s|\le
|L|_{L_c(C_{cn},\C^n)}|c\,\,\text{for all}\,\,j\in\N,\,s\in\R.
$$

3. Consequently, for each $s\in\R$,
$$
(\iota^{-1}\circ L_E\circ\Phi_j^c)_s\to
(\iota^{-1}\circ L_E\circ\Phi^c)_s\,\,\text{pointwise
on}\,\,I\,\,\text{as}\,\,j\to\infty
$$
and
$$
|(\iota^{-1}\circ L_E\circ\Phi_j^c)_s|\le |L|_{L_c(C_{cn},\C^n)}c\,\,\text{for
all}\,\,j\in\N,\,s\in\R.
$$

4. Let $s\in\R$. Proposition 3.5 (ii), applied to $J=I$ and $J_0=\{0\}$, yields
$$
(R_E(\iota^{-1}\circ L_E\circ\Phi_j^c)_s)y^{\ast}\to
(R_E(\iota^{-1}\circ L_E\circ\Phi^c)_s)y^{\ast}\,\,\text{as}\,\,j\to\infty,
$$
for each $y^{\ast}\in(\C^n)^{\ast}$. Also, by Proposition 3.4 and  by part 3,
$$
|R_E(\iota^{-1}\circ L_E\circ\Phi_j^c)_s|=\sup_{|y^{\ast}|\le1}|(R_E(\iota^{-1}\circ L_E\circ\Phi_j^c)_s)y^{\ast}|
$$
$$
\le |R|_{L_c(C_{cn},\C^n)}|(\iota^{-1}\circ L_E\circ\Phi_j^c)_s|\le |R|_{L_c(C_{cn},\C^n)} |L|_{L_c(C_{cn},\C^n)}c
$$
for all $j\in\N$, $s\in\R$.

5. Let $j\in\N$. Set $w_j=w(\phi_j)$ and $w_j^c=w_j^{\phi_j^c}$. Recall
$w_j^c(t)=(\phi_j)^c(t)-\iota^{-1}(L_E((\phi_j)^c)_t)$
for $t<0$ and $w_j^c(t)=w_j(t)$ for
$0\le t\le t_0$. By Proposition 5.5 (iii),
$$
|w_j^c(t)|=|w_j(t)|\le c_0|\phi_j|\le c_0c\,\,\text{for
all}\,\,t\in[0,t_0]
$$
and
$$
|w_j^c(t)|\le|\phi_j|+ |L|_{L_c(C_{cn},\C^n)}|\phi_j|\,\,\text{for}\,\,t<0.
$$
Combining these estimates we infer
$$
|w_j^c(t)|\le c_0c+c+ |L|_{L_c(C_{cn},\C^n)}c\,\,\text{for
all}\,\,j\in\N,\,t\le t_0.
$$

6. Next, Proposition 3.4 yields
$$
|R_E(w_j^c)_s|\le  |R|_{L_c(C_{cn},\C^n)} |(w_j^c)_s|\le
 |R|_{L_c(C_{cn},\C^n)}c(c_0+1+ |L|_{L_c(C_{cn},\C^n)})
$$
for all $j\in\N$, $s\in[0,t_0]$.

7. Observe that due to part 2 for each $s\le0$ we have
$$
w_j^c(s)=(\phi_j)^c(s)-\iota^{-1}(L_E((\phi_j)^c)_s)\to\phi^c(s)-\iota^{-1}(L_E(\phi^c)_s)
$$
as $j\to\infty$.\\

8. Recall the Banach space $W$ from the proof of Proposition 5.5. The sequence of the maps $w_j\in W$ is bounded, see part 5. 
It also is equicontinuous as for $u<t$ in $(0,t_0]$  we have
$$
|w_j(t)-w_j(u)|= |\iota(w_j(t))-\iota(w_j(u))|
$$
$$
\le\left|\int_{(0,t)}R_E(w_j^c)_sdm(s)-\int_{(0,u)}R_E(w_j^c)_sdm(s)\right|
$$
$$
+\left|\int_{(0,t)}R_E(\iota^{-1}\circ L_E\circ\Phi_j^c)_sdm(s)-\int_{(0,u)}R_E(\iota^{-1}\circ L_E\circ\Phi_j^c)_sdm(s)\right|
$$
$$
 = \sup_{|y^{\ast}|\le1}\left|\left(\int_{(0,t)}R_E(w_j^c)_sdm(s)-\int_{(0,u)}R_E(w_j^c)_sdm(s)\right)y^{\ast}\right|
$$
$$
+ \sup_{|y^{\ast}|\le1}\left|\left(\int_{(0,t)}R_E(\iota^{-1}\circ L_E\circ\Phi_j^c)_sdm(s)-
\int_{(0,u)}R_E(\iota^{-1}\circ L_E\circ\Phi_j^c)_sdm(s)\right)y^{\ast}\right|
$$
$$
=\sup_{|y^{\ast}\le1}\left|\left(\int_{(0,t)}R_E(w_j^c)_sdm(s)\right)y^{\ast}-\left(\int_{(0,u)}R_E(w_j^c)_sdm(s)\right)y^{\ast}\right|
$$
$$
+ \sup_{|y^{\ast}|\le1}\left|\left(\int_{(0,t)}R_E(\iota^{-1}\circ L_E\circ\Phi_j^c)_sdm(s)\right)y^{\ast}\right.
$$
$$
-\left.\left(\int_{(0,u)}R_E(\iota^{-1}\circ L_E\circ\Phi_j^c)_sdm(s)\right)y^{\ast}\right|
$$
$$
 = \sup_{|y^{\ast}|\le1}\left|\int_{(0,t)}(R_E(w_j^c)_s)y^{\ast}dm(s)-\int_{(0,u)}(R_E(w_j^c)_s)y^{\ast}dm(s)\right|
$$
$$
+ \sup_{|y^{\ast}|\le1}\left|\int_{(0,t)}(R_E(\iota^{-1}\circ L_E\circ\Phi_j^c)_s)y^{\ast}dm(s)-\int_{(0,u)}(R_E(\iota^{-1}\circ L_E\circ\Phi_j^c)_s)y^{\ast}dm(s)\right|
$$
$$= \sup_{|y^{\ast}|\le1}\left|\int_{(u,t)}(R_E(w_j^c)_s)y^{\ast}dm(s)\right|+ \sup_{|y^{\ast}|\le1}\left|\int_{(u,t)}(R_E(\iota^{-1}\circ L_E\circ\Phi_j^c)_s)y^{\ast}dm(s)\right|
$$
$$
\le  (t-u) |R|_{L_c(C_{cn},\C^n)}(c_0c+c+ |L|_{L_c(C_{cn},\C^n)}c)+(t-u) |R|_{L_c(C_{cn},\C^n)} |L|_{L_c(C_{cn},\C^n)}c,
$$
and the same estimate holds for $0=u<t\le t_0$, due to continuity.
Now the Theorem of Ascoli and Arzel\`a yields a subsequence of maps
$w_{j_k}$, $k\in\N$, which is uniformly convergent to some
continuous map $\hat{w}:[0,t_0]\to\C^n$. Recall the set $W_{\phi}$
from the proof of Proposition 5.5. We have $\hat{w}\in W_{\phi}$
since
$$
\iota(\hat{w}(0))=\iota(\lim_{k\to\infty}w_{j_k}(0))=\lim_{k\to\infty}\iota(\phi_{j_k}(0)-\iota^{-1}(L_E\phi_{j_k}))=\iota(\phi(0))-L_E\phi,
$$
see part 2 for $s=0$. Next we prove that $\hat{w}=w(\phi)$.

9. We define $\hat{w}^c=\hat{w}^{\phi^c}$, so
\begin{eqnarray*}
\hat{w}^c(t) & = & \hat{w}(t)\,\,\text{for}\,\,0\le t\le t_0,\\
\hat{w}^c(t) & = &
\phi^c(t)-(\iota^{-1}\circ L_E\circ\Phi^c)(t)\,\,\text{for}\,\,t<0.
\end{eqnarray*}
Using part 7 and the convergence of the maps $w_{j_k}$ to
$\hat{w}$ we infer that for each $s\le t_0$
$$
w_{j_k}^c(s)\to\hat{w}^c(s)\,\,\text{as}\,\,k\to\infty.
$$
It follows that for each $s\le t_0$ the segments
$(w_{j_k}^c)_s\in B_{nA}$, $k\in\N$, converge
pointwise to $(\hat{w}^c)_s$ as $k\to\infty$. Using the bound
from part 5 and Proposition 3.5 (ii) (with $J=I$ and $J_0=\{0\}$) we infer 
that for each $s\le t_0$
$$
R_E(w_{j_k}^c)_s\to
R_E(\hat{w}^c)_s\,\,\text{as}\,\,k\to\infty,
$$
with respect to the weak-star topology on $(\C^n)^{\ast\ast}$. That is, for every $y^{\ast}\in (C^n)^{\ast}$ and for each $s\le t_0$,
$$
(R_E(w_{j_k}^c)_s)y^{\ast}\to
(R_E(\hat{w}^c)_s)y^{\ast}\,\,\text{as}\,\,k\to\infty.
$$
Using part 5 for $k\in\N$ and $s\le t_0$ we get the estimate
\begin{eqnarray*}
|(R_E(w_{j_k}^c)_s)y^{\ast}| & \le &  |R|_{L_c(C_{cn},\C^n)}|(w_{j_k}^c)_s||y^{\ast}|\\
&  \le & |R|_{L_c(C_{cn},\C^n)}( c_0c+c+ |L|_{L_c(C_{cn},\C^n)}c)|y^{\ast}|
\end{eqnarray*}
for every $y^{\ast}\in(\C^n)^{\ast}$.

Let $t\in(0,t_0]$. Proposition 5.5 (i) (existence of weak-star integrals) and the Lebesgue Dominated Convergence Theorem combined yield
$$
\int_{(0,t)}(R_E(w_{j_k}^c)_s)y^{\ast}dm(s)\to\int_{(0,t)}(R_E(\hat{w}^c)_s)y^{\ast}dm(s)\,\,\text{as}\,\,k\to\infty,
$$
for every $y^{\ast}\in(\C^n)^{\ast}$.

Similarly we obtain for each  $y^{\ast}\in(\C^n)^{\ast}$ that
$$
\int_{(0,t)}(R_E(\iota^{-1}\circ L_E\circ\Phi_{j_k}^c)_s)y^{\ast}dm(s)\to\int_{(0,t)}(R_E(\iota^{-1}\circ L_E\circ\Phi^c)_s)y^{\ast}dm(s)\,\,\text{as}\,\,k\to\infty.
$$
From parts 1 and 2 for $s=0$ we get
$$
\phi_{j_k}(0)\to\phi(0)\,\,\text{and}\,\,\iota^{-1}(L_E\phi_{j_k})\to\iota^{-1}L_E\phi\,\,\text{as}\,\,k\to\infty.
$$
Recall Eq. (5.5). For every  $y^{\ast}\in(\C^n)^{\ast}$ and $k\in\N$ we have
\begin{eqnarray*}
(\iota(w_{j_k}(t)))y^{\ast} & = & (\iota(\phi_{j_k}(0)))y^{\ast}-(L_E\phi_{j_k})y^{\ast}+\left(\int_{(0,t)}R_E(w^{\phi^c})_sdm(s)\right)y^{\ast}\\
& & +\left(\int_{(0,t)}R_E(\iota^{-1}\circ L_E\circ\Phi^c)_sdm(s)\right)y^{\ast}\\
& = & y^{\ast}(\phi_{j_k}(0))-(L_E\phi_{j_k})y^{\ast}+\int_{(0,t)}(R_E(w^{\phi^c})_s)y^{\ast}dm(s)\\
&  & +\int_{(0,t)}(R_E(\iota^{-1}\circ L_E\circ\Phi^c)_sy^{\ast}dm(s).
\end{eqnarray*}
We pass to the limit as $k\to\infty$ and find
\begin{eqnarray*}
(\iota(\hat{w}(t)))y^{\ast} & = & y^{\ast}(\phi(0))-(L_E\phi)y^{\ast}+\int_{(0,t)}(R_E(\hat{w}^c)_s)y^{\ast}dm(s)\\
& & +\int_{(0,t)}(R_E(\iota^{-1}\circ L_E\circ\Phi^{\ast})_s)y^{\ast}dm(s),
\end{eqnarray*}
or
\begin{eqnarray*}
(\iota(\hat{w}(t)))y^{\ast} & = & (\iota(\phi(0)))y^{\ast}-(L_E\phi)y^{\ast}+\left(\int_{(0,t)}R_E(\hat{w}^c)_sdm(s)\right)y^{\ast}\\
& & +\left(\int_{(0,t)}R_E(\iota^{-1}\circ L_E\circ\Phi^{\ast})_sdm(s)\right)y^{\ast}.
\end{eqnarray*}
So, $\hat{w}\in W_{\phi}$ satisfies Eq. (5.5)
 on $(0,t_0]$, and $\iota(\hat{w}(0))=\iota(\phi(0))-L_E\phi$. By uniqueness, $\hat{w}(t)=w(\phi)(t)$ for $0\le t\le t_0$.

10. Analogously we obtain that every subsequence of $(w_j)_1^{\infty}$ has a further subsequence which uniformly converges to $w(\phi)$ on $[0,t_0]$. This implies that the sequence 
 $(w_j)_1^{\infty}$ uniformly converges to $w(\phi)$ on $[0,t_0]$.
\end{proof}

\begin{prop}
(Local solution of Eq. (5.4) with initial data in $B_{nA}$) (i) Let $\phi\in B_{nA}$. 
Then there is a uniquely determined map
$v:[-h,t_0]\to\C^n$, $v=v^{\phi}$, with property (A) and $v_0=\phi$
which satisfies the integral equation (5.4),
\begin{equation*}
\iota(v(t))-L_Ev_t=\iota(\phi(0))-L_E\phi+\int_{(0,t)}R_E
v_sdm(s)
\end{equation*}
for all $t\in(0,t_0]$.

(ii) There is a constant $c_1\ge0$ with
$$
\sup_{-h\le t\le t_0}|v^{\phi}(t)|\le c_1|\phi|\,\,\text{for
all}\,\,\phi\in B_{nA}.
$$

(iii) For every bounded  sequence of maps $\phi_j\in B_{nA}$, $j\in\N$, which
converges pointwise to a map $\phi\in
B_{nA}$ we have
$$
v^{\phi_j}(t)\to
v^{\phi}(t)\,\,\text{as}\,\,j\to\infty,\,\,\text{for
each}\,\,t\in[0,t_0].
$$
\end{prop}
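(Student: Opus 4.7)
The plan is to piece together $v^\phi$ from the continuous auxiliary solution $w(\phi)$ provided by Proposition 5.5 and the map $\iota^{-1}\circ L_E\circ\Phi^c$ produced by Proposition 5.4, via the definition
$$v^\phi(t)=\phi(t)\text{ on }[-h,0],\qquad v^\phi(t)=w(\phi)(t)+\iota^{-1}(L_E(\phi^c)_t)\text{ on }(0,t_0].$$
Evaluating (5.5) at $t=0$ gives $\iota(w(\phi)(0))=\iota(\phi(0))-L_E\phi$, so both branches agree at $t=0$. Property (A) of $v^\phi$ then follows from concatenation (Proposition 3.6(ii)), since $\phi$ and $w(\phi)$ have property (A) (the latter being continuous) and $\iota^{-1}\circ L_E\circ\Phi^c$ has property (A) by Proposition 5.4.

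The verification of (5.4) rests on two identities. First, from the definitions of $v^\phi$ and of $w^{\phi^c}$ one checks pointwise that
$$w^{\phi^c}(u)+(\iota^{-1}\circ L_E\circ\Phi^c)(u)=\begin{cases}\phi^c(u) & \text{for } u<0,\\ v^\phi(u) & \text{for }0\le u\le t_0.\end{cases}$$
Since $t_0\le\Delta_e<h$ and $v^\phi|[-h,0]=\phi=\phi^c|[-h,0]$, restricting this equality to any interval $[s-h,s]$ with $s\in[0,t_0]$ yields the segment identity $v^\phi_s=(w^{\phi^c})_s+(\iota^{-1}\circ L_E\circ\Phi^c)_s$ in $B_{nA}$. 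Linearity of $R_E$ then merges the two integrals on the right of (5.5) into $\int_{(0,t)}R_Ev^\phi_s\,dm(s)$, and combined with $\iota(v^\phi(t))=\iota(w(\phi)(t))+L_E(\phi^c)_t$ this produces
$$\iota(v^\phi(t))-L_E(\phi^c)_t=\iota(\phi(0))-L_E\phi+\int_{(0,t)}R_Ev^\phi_s\,dm(s).$$
For the second identity I use that $v^\phi_t-(\phi^c)_t$ vanishes on $[-h,-t]$, hence on $[-h,-\Delta_e]$ since $t\le t_0\le\Delta_e$; Proposition 5.3 therefore gives $L_Ev^\phi_t=L_E(\phi^c)_t$, which turns the previous display into (5.4).

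For uniqueness in (i), if $\tilde v$ has property (A), satisfies $\tilde v_0=\phi$ and solves (5.4), the same support argument yields $L_E\tilde v_t=L_E(\phi^c)_t$ for $t\in[0,t_0]$, so the function $\tilde w(t)=\tilde v(t)-\iota^{-1}(L_E(\phi^c)_t)$ has $\iota(\tilde w(t))=\iota(\phi(0))-L_E\phi+\int_{(0,t)}R_E\tilde v_s\,dm(s)$, which is continuous in $t$; reversing the segment identity shows $\tilde w$ satisfies (5.5), and Proposition 5.5(ii) forces $\tilde w=w(\phi)$, whence $\tilde v=v^\phi$. The bound (ii) combines Proposition 5.5(iii) with $|\iota^{-1}L_E(\phi^c)_t|\le|L|_{L_c(C_{cn},\C^n)}|\phi|$ and $|\phi(t)|\le|\phi|$ on $[-h,0]$. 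For (iii), Proposition 5.6 yields uniform convergence $w(\phi_j)\to w(\phi)$ on $[0,t_0]$; for each fixed $t$ the uniformly bounded sequence $(\phi_j^c)_t\in B_{nA}$ converges pointwise to $(\phi^c)_t$, so applying Corollary 3.3(i) to each $y^{\ast}\circ L\in C_{cn}^{\ast}$ gives $(L_E(\phi_j^c)_t)y^{\ast}\to(L_E(\phi^c)_t)y^{\ast}$ for every $y^{\ast}\in(\C^n)^{\ast}$, i.e.\ norm convergence in the finite-dimensional space $(\C^n)^{\ast\ast}$; adding the two limits yields $v^{\phi_j}(t)\to v^\phi(t)$.

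The main obstacle is organizing the two identities that reduce (5.4) to the already-solved (5.5): verifying the segment decomposition $v^\phi_s=(w^{\phi^c})_s+(\iota^{-1}\circ L_E\circ\Phi^c)_s$, and invoking Proposition 5.3 (the extension of hypothesis $(\Delta)$ to $B_{nA}$) to equate $L_Ev^\phi_t$ with $L_E(\phi^c)_t$. The choice $t_0\le\Delta_e$ is what makes this last step possible.
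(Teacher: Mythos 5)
Your proposal follows the paper's own proof essentially step for step: same construction $v^\phi(t)=w(\phi)(t)+\iota^{-1}(L_E(\phi^c)_t)$ on $[0,t_0]$, same use of Proposition 3.6(ii) for property (A), same reduction of Eq.\ (5.4) to Eq.\ (5.5) via the segment identity $v^\phi_s=(w^{\phi^c})_s+(\iota^{-1}\circ L_E\circ\Phi^c)_s$ and the identity $L_Ev^\phi_t=L_E(\phi^c)_t$ coming from Proposition 5.3 and $t_0\le\Delta_e$, same reversal for uniqueness, and the same invocation of Propositions 5.5(iii), 5.6, and 3.5(ii)/Corollary 3.3(i) for parts (ii) and (iii). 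The only place where you are more compressed than the paper is the assertion that $\tilde w$ is continuous in the uniqueness step, which the paper supports with an explicit integral estimate using the boundedness of $\tilde v$; that is a minor omission, not a gap in the argument.
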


\begin{proof}
1. (Construction of a map $v$) Let $\phi\in B_{nA}$ be given. Set $w=w(\phi)$ and $w^c=w^{\phi^c}$. 
Notice that
$$
\iota(\phi(0))=\iota(\phi(0))-L_E\phi+L_E\phi=\iota(w(0))+L_E(\phi^c)_0
$$
and define $v:[-h,t_0]\to\C^n$ by $v_0=\phi$ and
$$
\iota(v(t))=\iota(w(t))+L_E(\phi^c)_t\,\,\text{for
all}\,\,t\in[0,t_0].
$$
Property (A) of $v$ follows by concatenation (Proposition 3.6 (ii))
from the facts that $\phi=v_0$ has property (A), that $w$ is
continuous (so it has property (A)), and that $\iota^{-1}\circ L_E\circ\Phi^c$ has
property (A) (see Proposition 5.4). It follows that all restrictions
$v|[-h,t]$, $0\le t\le t_0$, and all segments $v_t$, $0\le t\le
t_0$, have property (A); we have $v_t\in B_{nA}$ on $[0,t_0]$.
Proposition 3.5 (ii) yields that
$$
[0,t_0]\ni s\mapsto R_Ev_s\in(\C^n)^{\ast\ast}
$$
has property (A) (with respect to the weak-star topology on $(\C^n)^{\ast\ast}$), and the weak-star integrals
$$
\int_{(0,t)}R_Ev_sdm(s),\quad 0<t\le t_0,
$$
exist according to Proposition 4.2.

2. Proof of $L_E(\phi^c)_t=L_Ev_t$ for all $t\in[0,t_0]$ : Both
$(\phi^c)_t$ and $v_t$ belong to $B_{nA}$, and for $-h\le
u\le-\Delta_e$ we have $-h\le t+u\le t_0-\Delta_e\le0$, hence
$$
(\phi^c)_t(u)=\phi^c(t+u)=\phi(t+u)=v(t+u)=v_t(u).
$$
Apply Proposition 5.3.

3. We verify Eq. (5.4): Let $t\in(0,t_0]$. Then
\begin{eqnarray*}
\iota(v(t))-L_Ev_t & = & \iota(v(t))-L_E(\phi^c)_t=\iota(w(t))\\
& = &
\iota(\phi(0))-L_E\phi+\int_{(0,t)}R_E(w^c)_sdm(s)\\
& & +\int_{(0,t)}R_E(\iota^{-1}\circ L_E\circ\Phi^c)_sdm(s)\\
& = &
\iota(\phi(0))-L_E\phi+\int_{(0,t)}(R_E(w^c)_s+R_E(\iota^{-1}\circ L_E\circ\Phi^c)_s)dm(s)\\
& = &  \iota(\phi(0))-L_E\phi+\int_{(0,t)}R_E((w^c)_s+(\iota^{-1}\circ L_E\circ\Phi^c)_s)dm(s).
\end{eqnarray*}
For $s\in[0,t]$ and $u\in I$ with $s+u\le0$ the definition of
$w^c=w^{\phi^c}$ (see Proposition 5.5 (i)) shows
\begin{eqnarray*}
v_s(u) & = & v(s+u)=\phi(s+u)=\phi^c(s+u)=w^c(s+u)+(\iota^{-1}\circ L_E\circ\Phi^c)(s+u)\\
& = & (w^c)_s(u)+(\iota^{-1}\circ L_E\circ\Phi^c)_s(u)
\end{eqnarray*}
and in case $0<s+u$,
\begin{eqnarray*}
v_s(u) & = & v(s+u)=w(s+u)+\iota^{-1}L_E(\phi^c)_{s+u}\\
& = & w^c(s+u)+(\iota^{-1}\circ L_E\circ\Phi^c)(s+u)\\
& = & (w^c)_s(u)+(\iota^{-1}\circ L_E\circ\Phi^c)_s(u).
\end{eqnarray*}
Together,
$$
v_s=(w^c)_s+(\iota^{-1}\circ L_E\circ\Phi^c)_s,
$$
and the last integral is $\int_{(0,t)}R_Ev_sdm(s)$, and Eq. (5.4) holds.

4. (Uniqueness) Let $\phi\in B_{nA}$ and $v:[-h,t_0]\to\C^n$ with
$v_0=\phi$ and property (A) be given. Due to Proposition 3.5 (ii) also the map
$[0,t_0]\ni s\mapsto R_ev_s\in(\C^n)^{\ast\ast}$ has property (A), and for each $t\in(0,t_0]$ the weak-star integral
$\int_{(0,t)}R_Ev_sdm(s)\in(\C^n)^{\ast\ast}$ exists, see Proposition 4.2.
Suppose that $v$ satisfies Eq. (5.4) for $0<t\le t_0$. In the sequel we show that
$v$ is uniquely determined by these properties.

\smallskip

Notice first that due to property (A) $v$ is bounded. Define
$w:[0,t_0]\to\C^n$ by
$$
\iota(w(t))=\iota(v(t))-L_Ev_t.
$$
In particular,
$$
\iota(w(0))=\iota(\phi(0))-L_E\phi.
$$
Using Eq. (5.4) and the boundedness of $v$ we get for $0<u<t\le t_0$ the estimate
\begin{eqnarray*}
|w(t)-w(u)| & = & |\iota(w(t))-\iota(w(u))|=\left|\int_{(0,t)}R_Ev_sdm(s)- \int_{(0,u)}R_Ev_sdm(s)\right|\\
& = & \sup_{|y^{\ast}|\le1}\left|\int_{(0,t)}(R_Ev_s)y^{\ast}dm(s)- \int_{(0,u)}(R_Ev_s)y^{\ast}dm(s)\right|\\
& = &  \sup_{|y^{\ast}|\le1}\left|\int_{(u,t)}(R_Ev_s)y^{\ast}dm(s)\right|\\
& \le & \sup_{|y^{\ast}|\le1}(t-u)|R|_{L_c(C_{cn},\C^n)}\sup_{-h\le a\le t_0}|v(a)||y^{\ast}| 
\end{eqnarray*}
which shows that $w$ is continuous at points $t\in(0,t_0]$. Continuity at $t=0$ follows from the estimate
\begin{eqnarray*}
|w(t)-w(0)| & = & |\iota(w(t))-\iota(w(0))|=|\iota(v(t))-L_Ev_t-(\iota(\phi(0))-L_E\phi)|\\
& = &|\iota(v(t))-L_Ev_t-(\iota(v(0))-L_Ev_0)|=\left|\int_{(0,t)}R_Ev_sdm(s)\right|\\
& & \text{(see Eq. (5.4)}\\
& \le & (t-0)|R|_{L_c(C_{cn},\C^n)}\sup_{-h\le a\le t_0}|v(a)|
\quad\text{(as before)}
\end{eqnarray*}
for $0<t\le t_0$.

As in part 2 we get
$$
L_Ev_t=L_E(\phi^c)_t\,\,\text{for all}\,\,t\in[0,t_0].
$$
In order that $v$ be uniquely determined it remains to show that $w$
satisfies Eq. (5.5), which has a unique solution due to Proposition
5.5. The proof of this begins with a look at the map
$w^c=w^{\phi^c}$ from Proposition 5.5, i. e.,
$$
w^c(t)=w(t)\,\,\text{on}\,\,[0,t_0]\,\,\text{and}\,\,w^c(t)
=\phi^c(t)-(\iota^{-1}\circ L_E\circ\Phi^c)(t)\,\,\text{on}\,\,(-\infty,0].
$$
Let $s\in[0,t_0]$. For every $u\in I$ with $s+u<0$ we have
\begin{eqnarray*}
(w^c)_s(u) & = &
w^c(s+u)=\phi^c(s+u)-(\iota^{-1}\circ L_E\circ\Phi^c)(s+u)\\
& = & v(s+u)-(\iota^{-1}\circ L_e\circ \Phi^c)(s+u),
\end{eqnarray*}
and in case $0\le s+u$,
\begin{eqnarray*}
(w^c)_s(u) & = & w^c(s+u)=w(s+u)=v(s+u)-\iota^{-1}(L_Ev_{s+u})\\
& = & v(s+u)-\iota^{-1}(L_E(\phi^c)_{s+u})=v(s+u)-(\iota^{-1}\circ L_E\circ\Phi^c)(s+u).
\end{eqnarray*}
Hence
$$
(w^c)_s=v_s-(\iota^{-1}\circ L_e\circ\Phi^c)_s\,\,\text{on}\,\,[0,t_0].
$$
Now Eq. (5.4) yields
\begin{eqnarray*}
\iota(w(t))  &  =  & \iota(v(t))-L_Ev_t=\iota(\phi(0))-L_E\phi+\int_{(0,t)}R_Ev_sdm(s)\\
& = &
\iota(\phi(0))-L_E\phi+\int_{(0,t)}R_E((w^c)_s+(\iota^{-1}\circ L_E\circ\Phi^c)_s)dm(s)\\
& = &
\iota(\phi(0))-L_E\phi+\int_{(0,t)}R_E(w^c)_sdm(s)\\
& & +\int_{(0,t)}R_E(\iota^{-1}\circ L_E\circ\Phi^c)_sdm(s)
\end{eqnarray*}
for all $t\in(0,t_0]$, which is Eq. (5.5).

5. Proof of (ii). For $\phi\in B_{nA}$ let $v^{\phi}$ denote the
unique map $v:[-h,t_0]\to\C^n$ with property (A) which satisfies $v_0=\phi$ and Eq.
(5.4) for $0<t\le t_0$. By the construction of a solution in parts 1-3 and by uniqueness,
$$
\iota(v^{\phi}(t))=\iota(w(\phi)(t))+L_E(\phi^c)_t\,\,\text{for}\,\,0\le
t\le t_0.
$$
Using this and Proposition 5.5 (iii) we obtain
\begin{eqnarray*}
|v^{\phi}(t)| & = & |\iota(v^{\phi}(t))|\le|\iota(w(\phi)(t))|+|L|_{L_c(C_{cn},\C^n)}|(\phi^c)_t|\\
& \le & |w(\phi)(t)|+|L|_{L_c(C_{cn},\C^n)}|\phi|\le (c_0+|L|_{L_c(C_{cn},\C^n)})|\phi|
\end{eqnarray*}
for $0\le t\le t_0$.

6. Proof of (iii). Suppose the sequence of the maps $\phi_j\in B_{nA}$, $j\in\N$,
is bounded and converges pointwise to a map $\phi\in
B_{nA}$. Let $t\in[0,t_0]$. Then
$$
B_{nA}\ni((\phi_j)^c)_t\to(\phi^c)_t\in
B_{nA}\,\,\text{pointwise on}\,\,I\,\,\text{as}\,\,j\to\infty,
$$
and
$$
\sup_{j,s}|((\phi_j)^c)_t(s)|\le\sup_{j,s}|\phi_j(s)|<\infty.
$$
Proposition 3.5 (ii), applied to $J=I$ and $J_0=\{0\}$,  yields
$$
L_E((\phi_j)^c)_t\to
L_E(\phi^c)_t\,\,\text{as}\,\,j\to\infty,
$$
with respect to the weak-star topology on $(\C^n)^{\ast\ast}$, which is equal to the norm-topology.
Now use the equations
$$
v^{\phi_j}(t)=w(\phi_j)(t)+\iota^{-1}(L_E((\phi_j)^c)_t)\,\,\text{for}\,\,j\in\N,
$$
the analogous equation for $v^{\phi}$, and Proposition 5.6.
\end{proof}

We proceed to maximal solutions of Eq. (5.4).

\begin{prop}
(i) For every $\phi\in B_{nA}$ there is a uniquely determined map
$y^{\phi}:[-h,\infty)\to\C^n$, $y=y^{\phi}$, with $y_0=\phi$ so that
all restrictions of $y$ to compact intervals and all segments $y_s$,
$0\le s$, have property (A), all weak-star integrals $\int_{(0,t)}R_Ey_sdm(s)$, $t>0$, exist, and Eq. (5.4),
$$
\iota(y(t))-L_Ey_t=\iota(\phi(0))-L_e\phi+\int_{(0,t)}R_Ey_sdm(s)
$$
holds for all $t>0$.

(ii) For each $t\ge0$ there exists a constant $c(t)\ge0$ so that for
all $\phi\in B_{nA}$ we have
$$
\sup_{-h\le s\le t}|y^{\phi}(s)|\le c(t)|\phi|.
$$

(iii) For every bounded sequence of maps $\phi_j\in B_{nA}$, $j\in\N$,
which converges pointwise to a map
$\phi\in B_{nA}$ we have
$$
y^{\phi_j}(t)\to
y^{\phi}(t)\,\,\text{as}\,\,j\to\infty,\,\,\text{for
each}\,\,t\ge-h.
$$
\end{prop}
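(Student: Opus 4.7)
The plan is to iterate Proposition 5.7 on successive intervals $[kt_0,(k+1)t_0]$ and concatenate the resulting local solutions.

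Starting from $v^{(0)} := v^{\phi}$ on $[-h,t_0]$ given by Proposition 5.7, set $\phi^{(1)} := v^{(0)}_{t_0}$, which lies in $B_{nA}$ because $v^{(0)}$ has property (A), and let $v^{(1)} := v^{\phi^{(1)}}$ be the corresponding local solution on $[-h,t_0]$. Inductively define $\phi^{(k+1)} := v^{(k)}_{t_0}$ and $v^{(k+1)} := v^{\phi^{(k+1)}}$. Define $y = y^{\phi}$ on $[-h,\infty)$ by $y|[-h,t_0] = v^{(0)}$ and $y(t) = v^{(k)}(t-kt_0)$ for $kt_0 \le t \le (k+1)t_0$; the values at breakpoints agree since $v^{(k+1)}(0) = \phi^{(k+1)}(0) = v^{(k)}(t_0)$. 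Each restriction of $y$ to a compact interval is obtained from finitely many concatenations, so Proposition 3.6(ii) delivers property (A) there, and each segment $y_s$ with $s\ge 0$ accordingly belongs to $B_{nA}$. Existence of the weak-star integrals $\int_{(0,t)}R_Ey_sdm(s)$ follows from Proposition 4.2 applied on each subinterval together with the additivity of Proposition 4.1.

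To verify Eq. (5.4) for all $t > 0$, the key identity is $y_t = v^{(k)}_{t-kt_0}$ for $t \in (kt_0,(k+1)t_0]$: when $u \in I$ satisfies $t+u \ge kt_0$ this is immediate, and when $t+u < kt_0$ one unwinds the initial data one level at a time, using $\phi^{(k)}(t-kt_0+u) = v^{(k-1)}_{t_0}(t-kt_0+u) = v^{(k-1)}(t-(k-1)t_0+u)$, and so on, until reaching $y(t+u)$. Adding the integral equations for $v^{(0)},\ldots,v^{(k)}$ then telescopes: each intermediate term $\iota(v^{(j)}(t_0)) - L_Ev^{(j)}_{t_0}$ cancels against $\iota(\phi^{(j+1)}(0)) - L_E\phi^{(j+1)}$ at the next level, while Proposition 4.1 (additivity and shift-invariance of weak-star integrals) reassembles the local integrals $\int_{(0,t_0)}R_Ev^{(j)}_sdm(s)$ and $\int_{(0,t-kt_0)}R_Ev^{(k)}_sdm(s)$ into the single integral $\int_{(0,t)}R_Ey_sdm(s)$.

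Uniqueness is enforced one interval at a time: any competing $\hat y$ restricted to $[-h,t_0]$ satisfies the hypotheses of Proposition 5.7(i) and therefore coincides with $v^{(0)}$; hence $\hat y_{t_0} = \phi^{(1)}$, and by the same segment identity the shifted map $[-h,t_0] \ni s \mapsto \hat y(s+t_0)$ is a local solution with initial value $\phi^{(1)}$, forcing it to equal $v^{(1)}$, and so on. For (ii), iterating Proposition 5.7(ii) yields $\sup_{-h\le s\le t_0}|v^{(k)}(s)| \le c_1|\phi^{(k)}| \le c_1^{k+1}|\phi|$, so $c(t) := c_1^{\lfloor t/t_0\rfloor + 1}$ works. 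For (iii), continuous dependence propagates step by step: Proposition 5.7(iii) gives $v^{\phi_j}(s) \to v^{\phi}(s)$ pointwise on $[0,t_0]$, and this together with $\phi_j \to \phi$ on $[-h,0)$ shows $\phi^{(1)}_j := v^{\phi_j}_{t_0} \to \phi^{(1)}$ pointwise on $I$, while the bound from (ii) keeps the sequence uniformly bounded in $B_{nA}$, so Proposition 5.7(iii) can be applied again on the next interval. The main obstacle is the bookkeeping in the second paragraph, namely checking the segment identity $y_t = v^{(k)}_{t-kt_0}$ carefully enough that the telescoping of the local integral equations reassembles correctly into Eq. (5.4); the remaining steps are a routine induction.
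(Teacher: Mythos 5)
Your proposal is correct and takes essentially the same route as the paper: construct the global solution by repeatedly applying Proposition 5.7 on intervals of length $t_0$ and concatenating, verify Eq.\ (5.4) by establishing the segment identity $y_t=v^{(k)}_{t-kt_0}$ and telescoping the local integral equations using Proposition 4.1, and propagate uniqueness, the bound, and pointwise continuous dependence one step at a time. The paper carries out the step from $[-h,t_0]$ to $[-h,2t_0]$ explicitly and then invokes induction, whereas you phrase the general inductive step directly; the only cosmetic difference is the exact form of the constant (the paper writes $1+c_1+c_1^2$ on $[-h,2t_0]$ where you write $c_1^{\lfloor t/t_0\rfloor+1}$, which is fine once one notes $c_1\ge1$, or simply replaces it with $\sum_{j=0}^{k+1}c_1^j$).
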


\begin{proof}
1. (A solution on $[-h,2t_0]$) Choose $t_0>0$ according to
Proposition 5.5. Let $\phi\in B_{nA}$ and consider $v^{\phi}$ as in
Proposition 5.7. Then $\chi=(v^{\phi})_{t_0}\in B_{nA}$, and
Proposition 5.7 yields a map $v^{\chi}$. For $t\in[-h,2t_0]$ define
\begin{eqnarray*}
Y(t) & = & v^{\phi}(t)\,\,\text{if}\,\,-h\le t\le t_0,\\
Y(t) & = & v^{\chi}(t-t_0)\,\,\text{if}\,\,t_0<t\le 2t_0.
\end{eqnarray*}
Using $v^{\chi}(0)=\chi(0)=v^{\phi}(t_0)$ and concatenation (Proposition 3.6
(ii)) we infer that $Y:[-h,2t_0]\to\C^n$ has property (A), which in
turn implies that all segments $Y_t$, $0\le t\le 2t_0$, have
property (A), and for all $t\in(0,2t_0]$ the weak-star integral $\int_{(0,t)}R_EY_sdm(s)$ exists, 
see Proposition 4.2. We have
$$
Y_t=v^{\chi}_{t-t_0}\quad\text{for}\quad t_0<t\le 2t_0
$$ 
since
$t_0<t\le 2t_0$ and $s\in I$ and $t_0<t+s$ yield
$$
Y_t(s)=Y(t+s)=v^{\chi}(t+s-t_0)=v^{\chi}_{t-t_0}(s)
$$
and $t_0<t\le 2t_0$ and $s\in I$ and $t+s\le t_0$ yield
\begin{eqnarray*}
Y_t(s) & = & Y(t+s)=v^{\phi}(t+s)=v^{\phi}(t-t_0+t_0+s)\\
& = & v^{\phi}(t_0+(t-t_0)+s)=\chi(t-t_0+s)=v^{\chi}(t-t_0+s)=v^{\chi}_{t-t_0}(s).
\end{eqnarray*}
For $t_0<t\le 2t_0$ we obtain
\begin{eqnarray*}
\iota(Y(t))-L_EY_t & = &
\iota(v^{\chi}(t-t_0))-L_E(v^{\chi})_{t-t_0}\\
& = & \iota(v^{\chi}(0))-L_E(v^{\chi})_0+\int_{(0,t-t_0)}R_E(v^{\chi})_sdm(s)\\
& = &
\iota(v^{\phi}(t_0))-L_E(v^{\phi})_{t_0}+\int_{(t_0,t)}R_E(v^{\chi})_{s-t_0}dm(s)\\
& = &
\iota(v^{\phi}(0))-L_E(v^{\phi})_0+\int_{(0,t_0)}R_E(v^{\phi})_sdm(s)\\
& & +\int_{(t_0,t)}R_E(v^{\chi})_{s-t_0}dm(s)\\
& = &
\iota(\phi(0))-L_E\phi+\int_{(0,t_0)}R_EY_sdm(s)+\int_{(t_0,t)}R_EY_sdm(s)\\
& = & \iota(\phi(0))-L_E\phi+\int_{(0,t)}R_EY_sdm(s).
\end{eqnarray*}
Also on $(0,t_0]$ we get
$$
\iota(Y(t))-L_EY_t=\iota(\phi(0))-L_E\phi+\int_{(0,t)}R_EY_sdm(s).
$$
It follows that the preceding equation holds for all
$t\in(0,2t_0]$.

2. (Uniqueness and the estimate from part (ii), both on $[-h,2t_0]$).
Suppose $Y:[-h,2\,t_0]\to\C^n$ has property (A), and $Y_0=\phi\in
B_{nA}$, and Eq. (5.4) holds on $(0,2\,t_0]$. Proposition 5.7 yields
$Y(t)=v^{\phi}(t)$ on $[0,t_0]$. The map $\hat{Y}:[-h,t_0]\ni
t\mapsto Y(t+t_0)\in\C^n$ (which has property (A)) satisfies Eq.
(5.4) on $(0,t_0]$ since for $0<t\le t_0$ we have
\begin{eqnarray*}
\iota(\hat{Y}(t))-L_E\hat{Y}_t & = &
\iota(Y(t+t_0))-L_EY_{t+t_0}=\iota(\phi(0))-L_E\phi\\
& & +\int_{(0,t+t_0)}R_EY_sdm(s)\\
& = & \iota(\phi(0))-L_E\phi+\int_{(0,t_0)}R_EY_sdm(s)+\int_{(t_0,t+t_0)}R_EY_sdm(s)\\
& = & \iota(v^{\phi}(t_0))-L_E(v^{\phi})_{t_0}+\int_{(0,t)}R_EY_{s+t_0}dm(s)\\
& = & \iota(\hat{Y}(0))-L_E\hat{Y}_0+\int_{(0,t)}R_E\hat{Y}_sdm(s).
\end{eqnarray*}
With $\chi=(v^{\phi})_{t_0}$ $(=Y_{t_0}=\hat{Y}_0\in B_{nA})$,
$$
\iota(\hat{Y}(t))-L_E\hat{Y}_t=\iota(\chi(0)
)-L_E\chi+\int_{(0,t)}R_E\hat{Y}_sdm(s)
$$
for $0<t\le t_0$. Proposition 5.7 yields
$\hat{Y}=v^{\chi}$. It follows that $Y$ is uniquely determined by
$\phi$. Moreover,
\begin{eqnarray*}
|Y(t)| & \le & c_1|\phi|\,\,\text{as}\,\,t\in[0,t_0]\,\,\text{and}\\
|Y(t)| & = & |Y((t-t_0)+t_0)|=|\hat{Y}(t-t_0)|\le c_1|\chi|\le
c_1c_1|\phi|\,\,\text{on}\,\,[t_0,2t_0],
\end{eqnarray*}
see Proposition 5.7 (ii). Hence
$$
|Y(t)|\le(1+c_1+c_1^2)|\phi|\,\,\text{for all}\,\,t\in[-h,2t_0].
$$

3. (Continuous dependence on $[-h,2t_0]$ with respect to pointwise
convergence) Suppose the sequence of the maps $\phi_j\in B_{nA}$, $j\in\N$, is 
bounded and converges pointwise to $\phi\in B_{nA}$. For $j
\in \N$ let $Y_j:[-h,2t_0]\to\C^n$ denote the unique map with property (A) and $Y_{j0}=\phi_j$ which satisfies
 Eq. (5.4) on $(0,2t_0]$. Similarly
let $Y:[-h,2t_0]\to\C^n$ denote the unique map  with property (A) and $Y_0=\phi$ which satisfies Eq. (5.4) on
$(0,2t_0]$. Using Proposition 5.7
(ii) and (iii) we infer that the restrictions of the maps $Y_j$ to
$[-h,t_0]$ are uniformly bounded and converge pointwise to $Y$ on
this interval, for $j\to\infty$. It follows that the sequence of the maps
$\chi_j=Y_{jt_0}\in B_{nA}$, $j\in\N$, is bounded and
converges pointwise to $\chi=Y_{t_0}$ as $j\to\infty$. Recall from
part 2 that the maps $\hat{Y}_j$, $j\in\N$, have property (A) and
solve Eq. (5.4) on $(0,t_0]$, and $\hat{Y}_{j0}=\chi_j$ for all
$j\in\N$. Now Proposition 5.7 yields that the maps $\hat{Y}_j$
converge pointwise to $\hat{Y}$ as $j\to\infty$. It follows that the maps $Y_j$
converge pointwise to $Y$ everywhere on
$[-h,2t_0]$, for $j\to\infty$.

4. Using induction and arguments as in parts 1-3 of the proof one
finds that for every $k\in\N$ and for each $\phi\in B_{nA}$ there is
a uniquely determined map $Y^{k,\phi}:[-h,kt_0]\to\C^n$ with
$(Y^{k,\phi})_0=\phi$ and property (A) so that Eq. (5.4) holds on
$(0,kt_0]$. Moreover statements analogous to (ii) and (iii) hold for
the maps $Y^{k,\phi}$, $\phi\in B_{nA}$ and $k\in\N$. For $\phi\in B_{nA}$ and $t\ge-h$ define
$Y^{\phi}(t)=Y^{k,\phi}(t)$ with $k\in\N$ such that $t\le kt_0$. Now it is easy to complete
the proof of Proposition 5.8.
\end{proof}

Using Eq. (5.4), Proposition 3.4 for $R_E$, and boundedness of
$y^{\phi}$ on bounded intervals we deduce the next result.

\begin{cor}
For each $\phi\in B_{nA}$ the map
$$
w:[0,\infty)\ni t\mapsto y^{\phi}(t)-\iota^{-1}(L_Ey^{\phi}_t)\in\C^n
$$
is continuous.
\end{cor}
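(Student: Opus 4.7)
The plan is to read off continuity of $w$ directly from the integral equation (5.4) supplied by Proposition 5.8. Applying $\iota$ to $w(t)=y^\phi(t)-\iota^{-1}(L_Ey^\phi_t)$ and invoking Eq. (5.4), one has for every $t>0$
$$
\iota(w(t))=\iota(y^\phi(t))-L_Ey^\phi_t=\iota(\phi(0))-L_E\phi+\int_{(0,t)}R_Ey^\phi_sdm(s),
$$
and at $t=0$, $\iota(w(0))=\iota(\phi(0))-L_E\phi$. Since $Y=\C^n$ is finite-dimensional, $\iota:\C^n\to(\C^n)^{\ast\ast}$ is a norm-preserving isomorphism, so it suffices to show that $t\mapsto\iota(w(t))$ is norm-continuous on $[0,\infty)$. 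As the terms $\iota(\phi(0))-L_E\phi$ are independent of $t$, the task reduces to showing continuity of the map
$$
[0,\infty)\ni t\mapsto\int_{(0,t)}R_Ey^\phi_sdm(s)\in(\C^n)^{\ast\ast}.
$$

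For continuity on an arbitrary bounded interval $[0,T]$, I would fix $T>0$ and use Proposition 5.8 (ii) to obtain $M:=|R|_{L_c(C_{cn},\C^n)}\,c(T)\,|\phi|$ as a uniform bound on $|R_Ey^\phi_s|$ for $s\in[0,T]$, invoking Proposition 3.4 (ii) to pass from $|y^\phi_s|\le c(T)|\phi|$ to the bound on $|R_Ey^\phi_s|$. Then for $0\le u<t\le T$, using the additivity of the weak-star integral (Proposition 4.1), the norm identity $|\zeta|=\sup_{|y^\ast|\le1}|\zeta y^\ast|$ in $(\C^n)^{\ast\ast}$, and the estimate
$$
\left|\int_{(u,t)}(R_Ey^\phi_s)y^\ast dm(s)\right|\le\int_{(u,t)}|R_Ey^\phi_s|\,|y^\ast|\,dm(s)\le(t-u)M|y^\ast|,
$$
one obtains
$$
\left|\int_{(0,t)}R_Ey^\phi_sdm(s)-\int_{(0,u)}R_Ey^\phi_sdm(s)\right|\le(t-u)M.
$$
The case $u=0$ gives continuity from the right at $0$. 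Hence $\iota\circ w$ is Lipschitz on $[0,T]$, so $w$ is (Lipschitz) continuous on $[0,T]$, and since $T$ was arbitrary, $w$ is continuous on $[0,\infty)$.

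There is no genuine obstacle here: once one rewrites $\iota(w(t))$ via (5.4), the whole argument is the standard Lipschitz estimate for an indefinite integral of a bounded integrand. The only minor points requiring care are (a) taking the norm in $(\C^n)^{\ast\ast}$ via the supremum over the unit ball of $(\C^n)^\ast$, so that Proposition 3.4 (ii) can be brought to bear on the integrand $R_Ey^\phi_s$, and (b) noting that $\iota$ is a norm-preserving isomorphism for the finite-dimensional range space, which legitimizes the transfer of continuity from $\iota\circ w$ back to $w$ itself.
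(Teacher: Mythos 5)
Your argument is correct and follows essentially the same route as the paper: apply $\iota$, rewrite $\iota(w(t))$ via Eq.\ (5.4), and obtain a Lipschitz estimate for the difference of weak-star integrals by passing to $\sup_{|y^\ast|\le1}$ and invoking Proposition 3.4 (ii) together with boundedness of $y^\phi$ on compact intervals. The only cosmetic difference is that you treat $u=0$ and $u>0$ uniformly, whereas the paper splits continuity at $t=0$ off as a separate estimate.
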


\begin{proof}
(As in part 4 of the proof of Proposition 5.7) Set $y=y^{\phi}$. Let $t_1\ge0$. Choose $k\in\N$ with $t_1<kt_0$. Let $c=\sup_{-h\le t\le kt_0}|y(t)|<\infty$. For $0< u<t\le kt_0$ we have
\begin{eqnarray*}
|w(t)-w(u)| & = & |\iota(w(t))-\iota(w(u))| = \left|\int_{(0,t)}R_Ey_sdm(s)-\int_{(0,u)}R_Ey_sdm(s)\right|\\
& = & \sup_{|y^{\ast}|\le1}\left|\int_{(0,t)}(R_Ey_s)y^{\ast}dm(s)-\int_{(0,u)}(R_Ey_s)y^{\ast}dm(s)\right|\\
& = &  \sup_{|y^{\ast}|\le1}\left|\int_{(u,t)}(R_Ey_s)y^{\ast}dm(s)\right|\\
& \le &(t-u) \sup_{|y^{\ast}|\le1}\sup_{u<s<t}|(R_Ey_s)y^{\ast}|\le(t-u)|R|_{L_c(C_{cn},\C^n)}c
\end{eqnarray*}
(with Proposition 3.4 (ii)). This yields continuity of $w$ at points $t_1>0$. Continuity at $t=0$ follows from the estimate
\begin{eqnarray*}
|w(t)-w(0)| & = & |\iota(w(t))-\iota(w(0))| =|\iota(y^{\phi}(t))-L_Ey_t^{\phi}-(\iota(y^{\phi}(0))-L_Ey_0^{\phi})|\\ 
& = & \left|\int_{(0,t)}R_Ey_sdm(s)\right|\quad\text{(see Eq. (5.4)}\\
& \le & (t-0)|R|_{L_c(C_{cn},\C^n)}c\quad\text{(as before)} 
\end{eqnarray*}
for $0<t\le kt_0$.
\end{proof}

Observe that all maps $I\to\C^n$ which are constant on $[-h,0)$
belong to $B_{nA}$.

\begin{definition}
For $j\in\{1,\ldots,n\}$ let $X_j:\R\to\C^n$ denote the map which
is given by $X_j(t)=0$ for $t<0$ and
$$
X_j(t)=y^{\phi_j}(t)\,\,\text{for}\,\,t\ge0
$$
where $\phi_j(t)=0$ on $[-h,0)$ and $\phi_{jk}(0)=\delta_{jk}$ for
$k\in\{1,\ldots,n\}$. The matrix-valued map
$$
X:\R\to\C^{n\times n}
$$
with columns $X_j$ is called fundamental solution of Eq. (1.2).
\end{definition}

Of course, the discontinuous columns of the fundamental solution
are only solutions to the integral equation (5.4),  and not solutions of the differential equation (1.2), but we prefer
to use the standard terminology here. Observe in the sequel that
$X_{jk}(t)$ is the coefficient in the $j$-th column and $k$-th row
of the matrix $X(t)$.

For every $j\in\{1,\ldots,n\}$ all restrictions of $X_j$ to
compact intervals and all segments $X_{jt}$, $t\in\R$, have the
property (A), due to Propositions 5.8 and 3.6 (ii).

Recall that the solutions $[-h,\infty)\to\C^n$ of the initial
value problem (5.1)-(5.2) define a strongly continuous semigroup of
continuous linear operators $S(t):C_{cn}\to C_{cn}$,$t\ge0$, and
that there exist constants $c\ge0$ and $\gamma\in\R$ with
\begin{equation}
|S(t)|_{L_c(C_{cn},C_{cn})}\le c\,e^{\gamma t}\,\,\text{for all}\,\,t\ge0.
\end{equation}

\begin{cor}
(Growth of the fundamental solution) Suppose the estimate (5.6)
holds. Then for each
$j\in\{1,\ldots,n\}$ and for all $t\ge0$ we have
$$
|X_j(t)|\le c\,e^{\gamma t}.
$$
\end{cor}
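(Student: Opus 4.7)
The plan is to approximate the discontinuous initial datum $\phi_j$ by a uniformly bounded sequence of continuous functions, apply the semigroup bound (5.6) to each continuous approximant, and pass to the limit using the continuous dependence result of Proposition 5.8 (iii).

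The first step is to identify $y^{\phi}$ with $v^{\phi}$ for every continuous initial datum $\phi\in C_{cn}$. Given such $\phi$, the solution $v^{\phi}$ of the initial value problem from Proposition 5.1 solves Eq.~(5.1); by Proposition 5.2 it therefore satisfies the integrated equation (5.4) for all $t>0$. Since $v^{\phi}$ is continuous on $[-h,\infty)$, all of its restrictions to compact intervals and all of its segments have property (A). The uniqueness part of Proposition 5.8 (i) then forces $y^{\phi}=v^{\phi}$, and in particular $y^{\phi}_t=v^{\phi}_t=S(t)\phi$ for every $t\ge 0$.

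Next, for each $m\in\N$ with $1/m<h$ define $\phi_j^{(m)}\in C_{cn}$ to be the continuous function which is $0$ on $[-h,-1/m]$, takes the value $e_j$ at $0$, and interpolates linearly in between. Then $|\phi_j^{(m)}|\le 1=|\phi_j|$, and for every $s\in I$ one has $\phi_j^{(m)}(s)\to\phi_j(s)$ as $m\to\infty$ (at $s=0$ both sides equal $e_j$; for $s<0$ one has $\phi_j^{(m)}(s)=0=\phi_j(s)$ as soon as $1/m<-s$). Combining the identification $y^{\phi_j^{(m)}}=v^{\phi_j^{(m)}}$ with the semigroup estimate~(5.6), for each $t\ge 0$ we obtain
$$
|y^{\phi_j^{(m)}}(t)|\le|v^{\phi_j^{(m)}}_t|=|S(t)\phi_j^{(m)}|\le c\,e^{\gamma t}|\phi_j^{(m)}|\le c\,e^{\gamma t}.
$$

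Finally, Proposition 5.8 (iii), applied to the bounded pointwise convergent sequence $(\phi_j^{(m)})_{m=1}^{\infty}$ in $B_{nA}$ with limit $\phi_j$, yields $y^{\phi_j^{(m)}}(t)\to y^{\phi_j}(t)=X_j(t)$ as $m\to\infty$ for each fixed $t\ge 0$. Passing to the limit in the previous display gives $|X_j(t)|\le c\,e^{\gamma t}$, as desired. The only real subtlety is the identification $y^{\phi}=v^{\phi}$ for continuous $\phi$, which is what bridges the classical semigroup estimate on $C_{cn}$ with the extended solution concept that admits discontinuous data; this identification rests on Proposition 5.2 together with the uniqueness in Proposition 5.8 (i).
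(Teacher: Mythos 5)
Your proof is correct and follows essentially the same route as the paper: approximate the discontinuous datum $\phi_j$ by continuous functions of norm $\le 1$, identify the extended solution $y^{\phi}$ with the classical $v^{\phi}=S(\cdot)\phi$ for continuous $\phi$ via Proposition 5.2 and the uniqueness in Proposition 5.8 (i), apply the semigroup bound (5.6), and pass to the limit using the continuous-dependence statement of Proposition 5.8 (iii). The only cosmetic difference is that you spell out an explicit piecewise-linear approximating sequence, whereas the paper simply asserts the existence of a pointwise convergent sequence with $|\phi_k|=1$.
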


\begin{proof}
Let $j\in\{1,\ldots,n\}$. Choose a sequence of continuous maps
$\phi_k\in C_{cn}$, $k\in\N$, with $|\phi_k|=1$ which converges
pointwise to $X_{j0}$. For $k\in\N$ let $v_k:[-h,\infty)\to\C^n$
denote the continuous solution of the IVP (5.1)-(5.2) with $v_{k0}=\phi_k$.
Proposition 5.2 shows that the continuous map $v_k$ also is a solution to Eq.
(5.4). All restrictions of $v_k$ to compact
intervals have property (A). It follows that $v_k=y^{\phi_k}$, for
all $k\in\N$. Let $t\ge0$. Proposition 5.8 (iii) yields
$$
(S(t)\phi_k)(0)=v_k(t)=y^{\phi_k}(t)\to
y^{X_{j0}}(t)=X_j(t)\,\,\text{as}\,\,k\to\infty.
$$
Using $|\phi_k|=1$ for all $k\in\N$ we infer
$$
|X_j(t)|=\lim_{k\to\infty}|(S(t)\phi_k)(0)|\le c\,e^{\gamma t}.
$$
\end{proof}
\section{Inhomogeneous equations}

Let a continuous map $q:[0,\infty)\to\C^n$ be given, in addition to
$L$ and $R$ from the previous section. A solution of the
inhomogeneous equation
\begin{equation}
\frac{d}{dt}(v-L\circ V)(t)=Rv_t+q(t)
\end{equation}
is a continuous map $v:[-h,\infty)\to\C^n$ so that
$$
[0,\infty)\ni t\mapsto v(t)-(L\circ V)(t)\in\C^n
$$
with $V:[0,\infty)\ni t\mapsto v_t\in C_{cn}$
is differentiable and satisfies Eq. (6.1) for all $t\ge0$. Solutions
are uniquely determined by their initial data $v_0\in C_{cn}$.

We look for a {\it particular solution} $p$ with $p_0=0$. Applying
Proposition 4.3 to the maps $X_j:\R\to\C^n$ and to the continuous
functions $q_j:[0,\infty)\to\C$, $j=1,\ldots,n$, we obtain that
the maps $p_j:\R\to\C^n$ given by
$$
p_j(t)=\int_{(0,t)}q_j(s)X_j(t-s)dm(s)\,\,\text{for}\,\,t>0
$$
and $p_j(t)=0$ for $t\le0$ are continuous. It follows that for
every $t>0$ the map
$$
(0,t)\ni s\mapsto X(t-s)q(s)\in\C^n
$$
is Lebesgue-integrable, and that the map $p=\sum_{j=1}^np_j$ is
continuous with
$$
p(t)=\int_{(0,t)}X(t-s)q(s)dm(s)\,\,\text{for}\,\,t>0
$$
and $p(t)=0$ for $t\le0$.

\begin{prop}
(i) For every $j\in\{1,\ldots,n\}$, $L_EX_{j0}=0$.

(ii) For every $t>0$ there is a bounded sequence of continuous maps from
$[0,t]^2$ into $\C^n$ which converges pointwise to the map
$$
[0,t]^2\ni(r,s)\mapsto\iota^{-1}( R_E(X_{s-r}q(r)))\in\C^n.
$$

(iii) For every $t>0$ the map
$$
(0,t)\ni s\mapsto R_E(X_{t-s}q(s))\in(\C^n)^{\ast\ast}
$$
has a weak-star integral, and
$$
\iota(Rp_t)=\int_{(0,t)}R_E(X_{t-s}q(s))dm(s).
$$
\end{prop}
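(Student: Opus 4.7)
The plan is to dispatch (i) immediately from Proposition 5.3, build the joint approximating sequence in (ii) from continuous approximations to the columns of $X$, and then deduce (iii) by combining (ii) (for existence of the weak-star integral) with Corollary 4.6 applied column by column. The only real subtlety is (ii), where one must produce a single sequence of continuous functions on $[0,t]^2$ that recovers the extension $R_E$ of discontinuous segments in the limit; this works because $R$ acting on a continuous segment commutes with $\iota$ (Proposition 3.4 (iii)) and because weak-star convergence in the finite-dimensional $(\C^n)^{\ast\ast}$ coincides with norm convergence.

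For (i), note that $X_{j0}(s)=0$ on $[-h,0)$ and $X_{j0}(0)=e_j$, so $X_{j0}$ is piecewise continuous in the sense of Proposition 3.6 (i) and hence has property (A); in particular $X_{j0}\in B_{nA}$. Since $X_{j0}$ vanishes on $[-h,-\Delta_e]\subset[-h,0)$, Proposition 5.3 gives $L_E X_{j0}=0$.

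For (ii), fix $t>0$. By the remark preceding Corollary 5.10 each column $X_j$ restricted to $[-h-t,t]$ has property (A); pick bounded sequences of continuous maps $\xi_k^{(j)}:[-h-t,t]\to\C^n$ converging pointwise to $X_j$ there, and assemble them into continuous matrix-valued maps $\xi_k:[-h-t,t]\to\C^{n\times n}$ with columns $\xi_k^{(j)}$. Define
\[
\Phi_k:[0,t]^2\ni(r,s)\mapsto R(\xi_{k,s-r}q(r))\in\C^n.
\]
Each $\Phi_k$ is continuous (the segment map $\tau\mapsto\xi_{k,\tau}$ is continuous into $C(I,\C^{n\times n})$ by uniform continuity of $\xi_k$; multiplication by $q(r)\in\C^n$ and application of $R$ are continuous), and the sequence is uniformly bounded by $|R|_{L_c(C_{cn},\C^n)}(\sup_k\sup_u|\xi_k(u)|)\max_{0\le r\le t}|q(r)|$. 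To verify pointwise convergence at $(r,s)$, observe that $\xi_{k,s-r}q(r)\in C_{cn}$ converges pointwise on $I$, boundedly, to $X_{s-r}q(r)\in B_{nA}$; by Corollary 3.3 (i) applied to each $y^{\ast}\circ R\in C_{cn}^{\ast}$ we obtain $(R_E(\xi_{k,s-r}q(r)))y^{\ast}\to(R_E(X_{s-r}q(r)))y^{\ast}$ for every $y^{\ast}\in(\C^n)^{\ast}$. Since weak-star convergence in $(\C^n)^{\ast\ast}$ is norm convergence and $R(\xi_{k,s-r}q(r))=\iota^{-1}R_E(\xi_{k,s-r}q(r))$ by Proposition 3.4 (iii), we conclude $\Phi_k(r,s)\to\iota^{-1}(R_E(X_{s-r}q(r)))$.

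For (iii), existence of the weak-star integral follows from (ii): restricting the approximating sequence to $s=t$ shows that $r\mapsto\iota^{-1}(R_E(X_{t-r}q(r)))$ has property (A) on $[0,t]$, hence each component is a bounded Borel function, and for every $y^{\ast}\in(\C^n)^{\ast}$ the scalar $(R_E(X_{t-r}q(r)))y^{\ast}=y^{\ast}(\iota^{-1}R_E(X_{t-r}q(r)))$ is Lebesgue integrable. For the identity, apply Corollary 4.6 with $a=0$, $b=t$, $\Lambda=R$, $f=X_j$ (which has property (A) on $[-h,t]$ and vanishes on $[-h,0)$), and scalar $q=q_j$; this yields
\[
\iota(Rp_{j,t})=\int_{(0,t)}R_E(q_j(s)X_{j,t-s})\,dm(s)
\]
for each $j$. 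Summing over $j$ and using $p_t=\sum_j p_{j,t}$ together with the linearity of $R_E$ and of the weak-star integral, one concludes $\iota(Rp_t)=\int_{(0,t)}R_E(X_{t-s}q(s))\,dm(s)$.
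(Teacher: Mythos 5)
Your proof is correct and follows essentially the same route as the paper: (i) by Proposition~5.3, (ii) by approximating the columns $X_j$ with continuous maps and using Proposition~3.4 (iii) together with the coincidence of weak-star and norm convergence in the finite-dimensional $(\C^n)^{\ast\ast}$ (the paper packages this step via Proposition~3.5 (ii), which produces exactly your sequence $\iota\circ\Phi_k$), and (iii) by applying Corollary~4.6 columnwise and summing. There is nothing substantive to add beyond the observation that both arguments are the same modulo whether one invokes Proposition~3.5 (ii) directly or re-derives it from Corollary~3.3 (i).
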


\begin{proof}
1. Assertion (i) follows by means of Proposition 5.3.

2. Proof of (ii). Let $t>0$, $j\in\{1,\ldots,n\}$. As
$X_j|[-t-h,t]$ has property (A) we obtain from Proposition 3.5 (ii)
a sequence of continuous maps $\chi_k:[-t,t]\to(\C^n)^{\ast\ast}$,
$k\in\N$, which is uniformly bounded and converges pointwise to
the map
$$
[-t,t]\ni u\mapsto R_EX_{ju}\in(\C^n)^{\ast\ast}
$$
as $k\to\infty$, with respect to the weak-star topology on $(\C^n)^{\ast\ast}$, which is equal to the norm topology.
As $\iota:\C^n\to(\C^n)^{\ast\ast}$ is a norm-preserving isomorphism it follows that
the maps
$$
\iota^{-1}\circ\chi_k:[-t,t]\to\C^n,\,\,k\in\N,
$$
are continuous, uniformly bounded and converge pointwise to the map
$$
[-t,t]\ni u\mapsto \iota^{-1}(R_EX_{ju})\in\C^n
$$
as $k\to\infty$. Moreover, for $k\to\infty$ the continuous
maps
$$
[0,t]^2\ni(s,r)\mapsto q_j(r)(\iota^{-1}\circ \chi_k)(s-r)\in\C^n,\,\,k\in\N,
$$
converge pointwise to the map
$$
[0,t]^2\ni(r,s)\mapsto q_j(r)(\iota^{-1}(R_EX_{j,s-r})\in\C^n
$$
for $k\to\infty$. The sequence of these maps also is uniformly bounded.
Use that $\iota^{-1}$ and $R_E$ are linear, and sum up over $j\in\{1,\ldots,n\}$.

3. Proof of (iii). Let $t>0$. Corollary 4.6 gives that for
every $j\in\{1,\ldots,n\}$ the map
$$
(0,t)\ni s\mapsto R_E(q_j(s)X_{j,t-s})\in(\C^n)^{\ast\ast}
$$
has a weak-star integral, with
$$
\iota(Rp_{jt})=\int_{(0,t)}R_E(q_j(s)X_{j,t-s})dm(s).
$$
Summation over $j\in\{1,\ldots,n\}$ and linearity now yield the
assertion.
\end{proof}

\begin{prop}
The map $p$ is a solution to Eq. (6.1).
\end{prop}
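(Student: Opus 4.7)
The plan is to prove the integrated form of Eq. (6.1), namely that
\[
p(t)-Lp_t=\int_0^t Rp_s\,ds+\int_0^t q(s)\,ds
\]
holds for all $t\ge 0$. Since $p(0)=0$, $p_0=0$, and the right-hand side is a Riemann integral of a continuous $\C^n$-valued function, it is differentiable with derivative $Rp_t+q(t)$; once this identity is established, the left-hand side is differentiable with the same derivative, which is exactly Eq. (6.1). As $\iota:\C^n\to(\C^n)^{\ast\ast}$ is a norm-preserving isomorphism, it suffices to verify the identity after applying $\iota$.

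The starting point is the integral equation (5.4) satisfied by each column $X_j$ of the fundamental solution. Combined with $L_EX_{j0}=0$ (Proposition 6.1 (i)) and $X_{j0}(0)=e_j$, it gives for every $r>0$
\[
\iota(X_j(r))-L_EX_{jr}=\iota(e_j)+\int_{(0,r)}R_EX_{js}\,dm(s).
\]
Fix $\sigma>0$, multiply by the scalar $q_j(\sigma)$ and sum over $j$. Writing $\gamma_{\sigma,r}\in B_{nA}$ for the element $u\mapsto X(r+u)q(\sigma)=\sum_j q_j(\sigma)X_{jr}(u)$, linearity of $L_E$ and $R_E$ yields the central identity
\[
\iota(X(r)q(\sigma))-L_E\gamma_{\sigma,r}=\iota(q(\sigma))+\int_{(0,r)}R_E\gamma_{\sigma,s}\,dm(s).
\]

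I then integrate this in $\sigma$ over $(0,t)$ with $r=t-\sigma$, pairing against an arbitrary $y^{\ast}\in(\C^n)^{\ast}$ to reduce everything to scalar Lebesgue integrals. The two terms on the left integrate to $\iota(p(t))$ (by the definition of $p$ and commutativity of $\iota$ with integrals) and $\iota(Lp_t)$ respectively, the second by Corollary 4.6 applied with $\Lambda=L$, $f=X_j$, $q=q_j$ to each $p_j$ and then summed. On the right, the first term is $\iota\bigl(\int_0^t q(s)\,ds\bigr)$, and the remaining double integral $\int_{(0,t)}\int_{(0,t-\sigma)}R_E\gamma_{\sigma,s}\,dm(s)\,dm(\sigma)$ is handled via Fubini: the substitution $s'=\sigma+s$ rewrites the integrand as $R_E(X_{s'-\sigma}q(\sigma))$ on the triangle $\{0<\sigma<s'<t\}$, and pairing with $y^{\ast}$ combined with Proposition 6.1 (ii) exhibits the resulting scalar function as a pointwise limit of a uniformly bounded sequence of continuous functions on $[0,t]^2$, hence a bounded Borel function. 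Corollary 2.4 then permits the exchange of order; Proposition 6.1 (iii) identifies the inner integral as $\iota(Rp_{s'})$, and the outer integral equals $\iota\bigl(\int_0^t Rp_s\,ds\bigr)$. Combining the four identifications and applying $\iota^{-1}$ yields the integrated identity. The main technical hurdle is the Fubini interchange on the triangle, and Proposition 6.1 (ii) provides exactly the measurability input needed to justify it.
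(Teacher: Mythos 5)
Your proof is correct and takes a cleaner organizational route than the paper's, though the two rest on the same technical pillars. The paper works "top--down" from $pr_j(p(t)-Lp_t)$: it decomposes $L_j$ via the Riesz-representing measures $\mu_{jk}$, expands $p_{kt}$ under the integral, applies Fubini to exchange the $d\mu_{jk}$- and $dm$-integrations, and only then recognizes the resulting expression $X_{lj}(t-r)-\sum_k\int X_{lk,t-r}d\mu_{jk}$ as $pr_j(X_l(t-r))-(L_EX_{l,t-r})pr_j$, inserting the fundamental solution equation at that stage; the residual double $R_E$-integral is then rearranged by a second Fubini exactly as in your argument. You instead work "bottom--up": you insert the fundamental solution equation at the very start, integrate against $q$, and dispatch the $L$-term in one stroke with Corollary 4.6 (applied column-wise and summed), never touching the measures $\mu_{jk}$ or a Fubini step for $L$. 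This trades the paper's explicit measure-theoretic bookkeeping for one extra invocation of the abstract commutativity lemma that is already in hand, and reduces the number of Fubini applications from two to one (with the sole Fubini step --- and its measurability justification via Proposition 6.1 (ii), paired with $y^{\ast}=pr_j$ --- being identical to the paper's part 6). Both proofs use Proposition 6.1 (i) for $L_EX_{j0}=0$ and Proposition 6.1 (iii) to collapse the inner $R_E$-integral to $\iota(Rp_{s'})$. Your version is the more structural of the two; the paper's makes the role of the Riesz measures visible at the cost of length.
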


\begin{proof}
1. It is sufficient to derive the equation
\begin{equation}
p(t)-Lp_t=\int_{(0,t)}Rp_sdm(s)+\int_{(0,t)}q(s)dm(s)
\end{equation}
for all $t>0$ as the integrands are continuous, the integrals are
Riemann integrals, and Eq. (6.1)
follows by differentiation.

2.  For $j\in\{1,\ldots,n\}$ set $L_j=pr_j\circ L\in(C_{cn})^{\ast}$. Recall the maps $ L_{jk}\in (C_c)^{\ast}$, for $j,k$ in $\{1,\ldots,n\}$, their extensions $ L_{jke}:B\to\C$  and the associated complex
Borel measures $\mu_{jk}$ from Section 3,
$$
d\mu_{jk}=h_{jk}d|\mu_{jk}|
$$
with  Borel
functions $h_{jk}:I\to\C$ with values on the unit circle. Recall also the definition of the extensions $L_{je}:B_n\to\C$ for $j\in\{1,\ldots,n\}$.
Analogously we have maps $R_j,R_{jk},R_{jke},R_{je}$, complex
Borel measures $\lambda_{jk}$, and measurable functions
$g_{jk}:I\to\C$ with values on the unit circle,
$$
d\lambda_{jk}=g_{jk}d|\lambda_{jk}|.
$$
Let $t>0$ and $j\in\{1,\ldots,n\}$ be given for the remainder of the proof. Then
\begin{eqnarray*}
pr_j(p(t)-Lp_t) & = &
p_j(t)-L_jp_t=p_j(t)-L_{je}p_t=p_j(t)-\sum_{k=1}^nL_{jke}p_{kt}\\
& = & p_j(t)-\sum_{k=1}^n\int p_{kt}d\mu_{jk}
\end{eqnarray*}
with
$$
\int p_{kt}d\mu_{jk}=\int
p_k(t+u)d\mu_{jk}(u)=\int p_k(t+u)h_{jk}(u)d|\mu_{jk}|(u)
$$
for each $k\in\{1,\ldots,n\}$. For such $k$ and for all $u\in I$ we have
$$
p_k(t+u)=\int_{(0,t)}\sum_{l=1}^nX_{lk}(t+u-r)q_l(r)dm(r)
$$
since in case $t+u\le 0$ both the left hand side and the integrand
are zero, and in case $0<t+u$ the definition of $p$ gives
\begin{eqnarray*}
p_k(t+u) & = & \int_{(0,t+u)}\sum_{l=1}^nX_{lk}(t+u-r)q_l(r)dm(r)\\
& = & \int_{(0,t)}\sum_{l=1}^nX_{lk}(t+u-r)q_l(r)dm(r)
\end{eqnarray*}
where the last equation holds because of $X_{lk}(t+u-r)=0$ for
$t+u<r<t$. Altogether we get
$$
\int
p_{kt}d\mu_{jk}=\int\left(\int_{(0,t)}\sum_{l=1}^nX_{lk}(t+u-r)q_l(r)dm(r)\right)h_{jk}(u)d|\mu_{jk}|(u)
$$
for every $k\in\{1,\ldots,n\}$.

3. We want to apply Corollary
2.4 (to Fubini's Theorem) and verify  measurability
properties first. Let $B_I$ and $B_{(0,t)}$ denote the Borel-$\sigma$-algebras on $I$ and on $(0,t)$, respectively. We show that the
maps
$$
I\times(0,t)\ni(u,r)\mapsto\sum_{l=1}^nX_{lk}(t+u-r)q_l(r)h_{jk}(u)\in\C,\quad
k=1,\ldots,n,
$$
are $B_I\times B_{(0,t)}$-measurable.

Each Borel function $q_l|(0,t)$ induces a function
$I\times(0,t)\ni(u,r)\mapsto q_l(r)\in\C$ for which preimages of
open sets have the form $I\times M$ with $M\in B_{(0,t)}$. Such
sets belong to the product $B_I\times B_{(0,t)}$. It follows that
the induced functions are $B_I\times B_{(0,t)}$-measurable.
Analogously all functions $h_{jk}:I\to\C$ induce functions
$I\times(0,t)\ni(u,r)\mapsto h_{jk}(u)\in\C$ which are $B_I\times B_{(0,t)}$-measurable. It
follows that the products
$$
I\times(0,t)\ni(u,r)\mapsto q_l(r)h_{jk}(u)\in\C
$$
are $B_I\times B_{(0,t)}$-measurable.

As the restrictions $X_{lk}|[-h,t]$ have property (A) it follows
that also the maps
$$
I\times(0,t)\ni(u,r)\mapsto X_{lk}(t+u-r)\in\C
$$
are pointwise limits of continuous functions. So they are
$B_I\times B_{(0,t)}$-measurable provided all continuous functions
$I\times(0,t)\to\C$ are $B_I\times B_{(0,t)}$-measurable. The last
statement is true since preimages of open sets under such
continuous maps are open in $I\times(0,t)$ and therefore countable
unions of sets $U\times V$ with $U$ and $V$ open subsets of $I$
and $(0,t)$, respectively; we have $U\times V\in B_I\times
B_{(0,t)}$.

Altogether we see that for all $l,k$ in $\{1,\ldots,n\}$ the
function
$$
I\times(0,t)\ni(u,r)\mapsto X_{lk}(t+u-r)q_l(r)h_{jk}(u)\in\C
$$
is $B_I\times B_{(0,t)}$-measurable.

4. It is now easy to see that for all $k,l$ in $\{1,\ldots,n\}$ and for every $r\in(0,t)$ the function
$$
I\ni u\mapsto X_{lk}(t+u-r)q_l(r)h_{jk}(u)\in\C
$$
is Lebesgue-integrable. Let $k\in\{1,\ldots,n\}$. We want to reverse the order of integration in the formula for
$\int p_{kt}d\mu_{jk}$. Notice first that because of property (A) the maps $X_{lk}$, $l\in\{1,\ldots,n\}$, are bounded on
$[-h,t]$. Using this in combination with the continuity of $q$ and
$|h_{jk}(t)|=1$ we infer that
$$
\int\left(\int_{(0,t)}\left|\sum_{l=1}^nX_{lk}(t+u-r)q_l(r)h_{jk}(u)\right|dm(r)\right)d|\mu_{jk}|(u)<\infty.
$$
So Corollary 2.4 applies, and the assertion of part (c) in Theorem 2.3 holds. We infer
\begin{eqnarray*}
\int p_{kt}d\mu_{jk} & = & \int\left(\int_{(0,t)}\sum_{l=1}^nX_{lk}(t+u-r)q_l(r)dm(r)\right)h_{jk}(u)d|\mu_{jk}|(u)\\
& = & \int\phi(u)d|\mu_{jk}|(u)\\
& & \text{with a Borel function}\,\,\phi:I\to\C\,\,\text{which satisfies}\\
& & \phi(u)=\int_{(0,t)}\sum_{l=1}^nX_{lk}(t+u-r)q_l(r)h_{jk}(u)dm(r)\\
& & \text{outside a Borel set}\,\,Z\subset I\,\,\text{with}\,\,|\mu_{jk}|(Z)=0\\
& = & \int_{(0,t)}\psi(r)dm(r)\\
& & \text{with a Borel function}\,\,\psi:(0,t)\to\C\,\,\text{which satisfies}\\
& & \psi(r)=\int\sum_{l=1}^nX_{lk}(t+u-r)q_l(r)h_{jk}(u)d|\mu_{jk}|(u)\\
& & \text{outside a Borel set}\,\,N\subset(0,t)\,\,\text{with}\,\,m(N)=0\\
& = & \int_{(0,t)}\left(\int\sum_{l=1}^nX_{lk}(t+u-r)q_l(r)h_{jk}(u)d|\mu_{jk}|(u)\right)dm(r)\\
& = & \int_{(0,t)}\sum_{l=1}^nq_l(r)\left(\int
X_{lk}(t+u-r)h_{jk}(u)d|\mu_{jk}|(u)\right)dm(r).
\end{eqnarray*}

5. Using the definition of $p$ and equations in parts 2 and 4 we get
\begin{eqnarray*}
pr_j(p(t) & - & Lp_t)=
\int_{(0,t)}\sum_{l=1}^nX_{lj}(t-r)q_l(r)dm(r)\\
& & -\sum_{k=1}^n\int_{(0,t)}\sum_{l=1}^nq_l(r)\left(\int X_{lk}(t+u-r)h_{jk}(u)d|\mu_{jk}|(u)\right)dm(r)\\
& = &
\int_{(0,t)}\left\{\sum_{l=1}^nq_l(r)\left[X_{lj}(t-r)-\sum_{k=1}^n\int X_{lk}(t+u-r)d\mu_{jk}(u)\right]\right\}dm(r)\\
& = &
\int_{(0,t)}\left\{\sum_{l=1}^nq_l(r)\left[X_{lj}(t-r)-\sum_{k=1}^n\int
X_{lk,t-r}d\mu_{jk}\right]\right\}dm(r).
\end{eqnarray*}
For every $l\in\{1,\ldots,n\}$  and $r\in(0,t)$ we have
\begin{eqnarray*}
(L_EX_{l,t-r})pr_j & = & (pr_j\circ L)_eX_{l,t-r}=L_{je}X_{l,t-r}\\
& = &
\sum_{k=1}^nL_{jke}[X_{l,t-r}]_k=
\sum_{k=1}^n\int X_{lk,t-r}d\mu_{jk}.
\end{eqnarray*}
Also,
\begin{eqnarray*}
X_{lj}(t-r)-(L_EX_{l,t-r})pr_j & = & pr_j(X_l(t-r))- (L_EX_{l,t-r})pr_j\\
& = & (\iota(X_l(t-r)))pr_j-(L_EX_{l,t-r})pr_j\\
& = & (\iota(X_l(0)))pr_j+\left\{\int_{(0,t-r)}R_EX_{l,s}dm(s)\right\}pr_j\\
& & \text{(with the integral equation satisfied}\\
& & \text{ by the fundamental solution,}\\
& &  \text{and with Proposition 6.1 (i))}\\
& = & pr_j(X_l(0))+\left\{\int_{(0,t-r)}R_EX_{l,s}dm(s)\right\}pr_j\\
& = & \delta_{jl}+\left\{\int_{(0,t-r)}R_EX_{l,s}dm(s)\right\}pr_j.
\end{eqnarray*}
Altogether we infer
\begin{eqnarray*}
pr_j(p(t) & - & Lp_t)=
\int_{(0,t)}\left\{\sum_{l=1}^nq_l(r)\left[X_{lj}(t-r)-\sum_{k=1}^n\int X_{lk,t-r}d\mu_{jk}\right]\right\}dm(r)\\
& = & \int_{(0,t)}\left\{\sum_{l=1}^nq_l(r)\left[X_{lj}(t-r)-(L_EX_{l,t-r})pr_j\right]\right\}dm(r)\\
& = & \int_{(0,t)}\left\{\sum_{l=1}^nq_l(r)\left[\delta_{lj}+\left(\int_{(0,t-r)}R_EX_{l,s}dm(s)\right)pr_j\right]\right\}dm(r)\\
& = & \int_{(0,t)}q_j(r)dm(r)+\int_{(0,t)}\sum_{l=1}^nq_l(r)\left(\int_{(0,t-r)}R_EX_{l,s}dm(s)\right)pr_jdm(r).
\end{eqnarray*}
(for $t>0$ and $j\in\{1,\ldots\}$).

6. For the proof of Eq. (6.2) it remains to show that we have
\begin{equation}
\int_{(0,t)}\left\{\sum_{l=1}^nq_l(r)\left[\int_{(0,t-r)}R_EX_{l,s}dm(s)\right]pr_j\right\}dm(r)
=\int_{(0,t)}pr_j(Rp_s)dm(s).
\end{equation}
Let $l\in\{1,\ldots,n\}$ and $r\in(0,t)$ be given. Using Proposition 4.1 we get
$$
\int_{(0,t-r)}R_EX_{l,s}dm(s)=\int_{(r,t)}R_EX_{l,s-r}dm(s)=\int_{(0,t)}R_EX_{l,s-r}dm(s)
$$
since $X_{l,s-r}=0$ for $s<r$. Consider the term on the left hand side in Eq. (6.3). We infer
\begin{eqnarray*}
\int_{(0,t)} & \,\, & \left\{\sum_{l=1}^n q_l(r)\left[\int_{(0,t-r)}R_EX_{l,s}dm(s)\right]pr_j\right\}dm(r)\\
& = &
\int_{(0,t)}\left\{\sum_{l=1}^nq_l(r)\left[\int_{(0,t)}R_EX_{l,s-r}dm(s)\right]pr_j\right\}dm(r)\\
& = &
\int_{(0,t)}\left\{\left[\int_{(0,t)}R_E\left(\sum_{l=1}^nq_l(r)X_{l,s-r}\right)dm(s)\right]pr_j\right\}dm(r)\quad\text{(with Proposition 4.1)}\\
& = &
\int_{(0,t)}\left\{\int_{(0,t)}R_E(X_{s-r}q(r))pr_jdm(s)\right\}dm(r).
\end{eqnarray*}
We want to apply Corollary 2.4 (Fubini's Theorem) and have to check the hypotheses.
Proposition 6.1 (ii) guarantees that there is a bounded sequence of continuous maps
$$
\chi_{\nu}:(0,t)^2\to\C^n,\,\,\nu\in\N,
$$
which converges pointwise to the map
$$
(0,t)^2\ni(r,s)\mapsto\iota^{-1}(R_E(X_{s-r}q(r)))\in\C^n
$$
as $\nu\to\infty$. The sequence of the continuous functions $pr_j\circ\chi_{\nu}:(0,t)^2\to\C$, $\nu\in\N$,
is bounded and converges pointwise to the function
$$
(0,t)^2\ni(r,s)\mapsto pr_j(\iota^{-1}(R_E(X_{s-r}q(r))))\in\C
$$
as $\nu\to\infty$. For $(r,s)\in(0,t)^2$ and $\nu\in\N$ we have
$$
 pr_j(\iota^{-1}(R_E(X_{s-r}q(r))))=(\iota(\iota^{-1}(R_E(X_{s-r}q(r)))))pr_j=(R_E(X_{s-r}q(r)))pr_j.
$$
It follows that the function
$$
(0,t)^2\ni(r,s)\mapsto R_E(X_{s-r}q(r))pr_j\in\C
$$
is $B_{(0,t)}\times B_{(0,t)}$-measurable and bounded (compare part 3 above), and we can apply Corollary 2.4. Hence
$$
\int_{(0,t)}\left\{\int_{(0,t)}(R_E(X_{s-r}q(r)))pr_jdm(s)\right\}dm(r)
$$
$$
=\int_{(0,t)}\left\{\int_{(0,t)}(R_E(X_{s-r}q(r)))pr_jdm(r)\right\}dm(s)
$$
$$
=\int_{(0,t)}\left\{\int_{(0,s)}(R_E(X_{s-r}q(r)))pr_jdm(r)\right\}dm(s)
\quad\text{(as $X_{s-r}q(r)=0\in B_{nA}$ for $s<r$)}
$$
$$
=\int_{(0,t)}\left\{\left[\int_{(0,s)}R_E(X_{s-r}q(r))dm(r)\right]pr_j\right\}dm(s)
$$
$$
=\int_{(0,t)}\{(\iota(Rp_s))pr_j\}dm(s)\quad\text{(with Proposition 6.1 (iii))}
$$
$$
=\int_{(0,t)}pr_j(Rp_s)dm(s),
$$
which completes the proof of Eq. (6.3).
\end{proof}

\begin{cor}
(Variation-of-constants formula) For every $\phi\in C_{cn}$ there
exists a unique solution $w:[-h,\infty)\to\C^n$ of Eq. (6.1) with
$w_0=\phi$. This solution $w=w^{\phi}$ is given by
\begin{eqnarray*}
w^{\phi} & = & v^{\phi}+p,\quad\text{or}\\
w^{\phi}(t) & = & v^{\phi}(t)+\int_{(0,t)}X(t-s)q(s)dm(s)\quad\text{for}\quad t>0,
\end{eqnarray*}
with the solutions $v^{\phi}$ to the IVP (5.1)-(5.2) from Proposition
5.1 and with the solution $p$ to Eq. (6.1) from Proposition 6.2.
\end{cor}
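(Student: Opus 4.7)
The plan is to verify that $w^{\phi}:=v^{\phi}+p$ has all the required properties and then to obtain uniqueness from Proposition 5.1 via linearity. Both ingredients are already in place: Proposition 5.1 yields the continuous homogeneous solution $v^{\phi}$ of the IVP (5.1)--(5.2), and Proposition 6.2, together with the construction preceding Proposition 6.1, gives a continuous particular solution $p:[-h,\infty)\to\C^n$ of Eq.~(6.1) with $p_0=0$ and with the closed form $p(t)=\int_{(0,t)}X(t-s)q(s)\,dm(s)$ for $t>0$.

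For existence I would set $w:=v^{\phi}+p$ and immediately check $w_0=v^{\phi}_0+p_0=\phi+0=\phi$; continuity of $w$ on $[-h,\infty)$ is inherited from the two summands. Because segments are defined pointwise and $L$ is linear, the segment-map $W(t)=w_t$ agrees with $V^{\phi}(t)+P(t)$, so
\[
w(t)-(L\circ W)(t)=\bigl(v^{\phi}(t)-(L\circ V^{\phi})(t)\bigr)+\bigl(p(t)-(L\circ P)(t)\bigr),
\]
which is differentiable on $[0,\infty)$ as a sum of differentiable maps. Adding Eq.~(5.1) for $v^{\phi}$ and Eq.~(6.1) for $p$ and invoking the linearity of $R$ then gives
\[
\frac{d}{dt}(w-L\circ W)(t)=Rv^{\phi}_t+Rp_t+q(t)=Rw_t+q(t),
\]
so $w$ satisfies Eq.~(6.1) with $w_0=\phi$.

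For uniqueness, suppose $w_1,w_2$ are two solutions of Eq.~(6.1) with $w_{1,0}=w_{2,0}=\phi$. The difference $u:=w_1-w_2$ is continuous, satisfies $u_0=0$, and by linearity of $L$ and $R$ solves the homogeneous equation (1.2). The uniqueness part of Proposition 5.1 applied with initial datum $0\in C_{cn}$ forces $u\equiv v^{0}=0$, so $w_1=w_2$. The closed-form expression in the statement is then simply the substitution of the explicit formula for $p$.

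I do not anticipate a substantive obstacle: all the delicate analysis already sits inside Propositions 5.1 and 6.2, so this corollary is essentially a bookkeeping exercise in linearity. The only points requiring mild care are the identification $W=V^{\phi}+P$ (immediate from the definition of segments) and the observation that the difference of two solutions of the inhomogeneous equation satisfies the homogeneous equation, which lets us reduce uniqueness for (6.1) to uniqueness for (5.1)--(5.2) already established.
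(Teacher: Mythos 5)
Your proof is correct and takes essentially the same route as the paper: decompose $w^{\phi}=v^{\phi}+p$, verify the initial condition and the equation by linearity of $L$, $R$ and the derivative, and reduce uniqueness for the inhomogeneous equation to Proposition 5.1 by noting that the difference of two solutions of Eq.~(6.1) with the same initial datum solves the homogeneous problem with zero data. The paper's proof simply abbreviates the uniqueness step by recording it as a remark in the opening lines of Section 6; your version spells out the same reduction.
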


\begin{proof}
It remains to show that $w=w^{\phi}$  is a solution of Eq. (6.1) with $w_0=\phi$. We have $w_0=\phi+0=\phi$, $w$ is continuous, and for all $t\ge0$,
$$
w(t)-Lw_t=v(t)-Lv_t+p(t)-Lp_t.
$$
Define $W:[0,\infty)\to C_{cn}$ by $W(t)=w_t$.
It follows that the map $w-L\circ W:[0,\infty)\ni t\mapsto w(t)-(L\circ W)(t)\in\C^n$
is differentiable, with
$$
\frac{d}{dt}(w-L\circ W)(t)=Rv_t+Rp_t+q(t)=Rw_t+q(t)
$$
for all $t\ge0$, with the right derivative at $t=0$.
\end{proof}

\begin{cor}
Suppose the estimate (5.6) holds. Then for all $\phi\in C_{cn}$
and for all $t\ge0$ we have
$$
|w^{\phi}(t)|\le c\left(e^{\gamma
t}|\phi|+n\int_0^t\,e^{\gamma(t-s)}|q(s)|ds\right).
$$
\end{cor}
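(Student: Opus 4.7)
The plan is to combine the variation-of-constants formula of Corollary 6.3, which writes $w^{\phi}=v^{\phi}+p$ with $p(t)=\int_{(0,t)}X(t-s)q(s)\,dm(s)$ for $t>0$, with two ingredients already established in the paper: the semigroup growth bound (5.6) and the columnwise fundamental-solution estimate of Corollary 5.11. I would bound the two summands of $w^{\phi}(t)$ separately and then add.

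For the homogeneous term, I would use that $v^{\phi}(t)=(S(t)\phi)(0)$ by the definition of the semigroup $S(t)$ in Proposition 5.1; since evaluation at $0$ has operator norm at most $1$ from $C_{cn}$ to $\C^n$, the hypothesis (5.6) gives $|v^{\phi}(t)|\le|S(t)\phi|\le c\,e^{\gamma t}|\phi|$ for every $t\ge 0$.

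For the particular solution $p$, I would start from the column-sum expression $p(t)=\sum_{j=1}^{n}\int_{(0,t)}q_j(s)X_j(t-s)\,dm(s)$ recorded just before Proposition 6.1. Corollary 5.11 gives $|X_j(t-s)|\le c\,e^{\gamma(t-s)}$ for each $j$, and the componentwise bound $|q_j(s)|\le|q(s)|$ holds for the norm in use on $\C^n$. Pulling the norm inside the integral and summing over the $n$ columns produces the factor $n$:
$$
|p(t)|\le\sum_{j=1}^{n}\int_{(0,t)}|q_j(s)|\,|X_j(t-s)|\,dm(s)\le nc\int_0^t e^{\gamma(t-s)}|q(s)|\,ds.
$$
Adding the two estimates yields the claim for $t>0$; the case $t=0$ is immediate since $p(0)=0$, $|\phi(0)|\le|\phi|$, and $c\ge1$ (forced by (5.6) at $t=0$).

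I do not expect a genuine obstacle here: every ingredient is already available in Sections 5 and 6, and the integral $\int X(t-s)q(s)\,dm(s)$ is a genuine (Lebesgue/Riemann) integral in $\C^n$ by the discussion in Section 6, so there is no need to revert to the weak-star machinery. The only point worth flagging rather than merely computing is the emergence of the factor $n$, which arises solely from summing the column-wise fundamental-solution bound and is intrinsic to treating the matrix-valued $X$ one column at a time.
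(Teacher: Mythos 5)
Your proof is correct and follows essentially the same route as the paper: invoke Corollary 6.3 to split $w^{\phi}=v^{\phi}+p$, bound the homogeneous part via (5.6) and the particular solution via the columnwise estimate of Corollary 5.11 (which produces the factor $n$). The paper's proof is simply a terser version of the same computation.
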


\begin{proof}
Use Corollaries 6.3 and 5.11 and
\begin{eqnarray*}
\left|\int_{(0,t)}X(t-s)q(s)dm(s)\right| & = & \left|\int_{(0,t)}\sum_{l=1}^nq_l(s)X_l(t-s)dm(s)\right|\\
\le\int_{(0,t)}n|q(s)|c\,e^{\gamma(t-s)}dm(s)
\end{eqnarray*}
for $t>0$.
\end{proof}




\end{document}